%
%%%%%%%%%%%%%%%%%%%%%%%%%%%%%%%%%%%%%%%%%%%%
%
%               THIS IS LATEX2E
%
%%%%%%%%%%%%%%%%%%%%%%%%%%%%%%%%%%%%%%%%%%%% 

\documentclass[reqno]{amsart}

\input xy
\xyoption{all}
\usepackage{accents}
\usepackage{epsfig}
\usepackage{color}
\usepackage{amsthm}
\usepackage{amssymb}
\usepackage{amsmath}
\usepackage{amscd}
\usepackage{amsopn}
\usepackage{graphicx}

\usepackage{xspace}

\usepackage{url}
\usepackage{enumitem, hyperref}\hypersetup{colorlinks}

%\usepackage{pdfsync}

% Colors:
\usepackage{color} %\textcolor{red}{Test}

\definecolor{darkred}{rgb}{1,0,0} %can change the intensity in [0,1]
\definecolor{darkgreen}{rgb}{0,0.8,0}
\definecolor{darkblue}{rgb}{0,0,1}

\hypersetup{colorlinks,
linkcolor=darkblue,
filecolor=darkgreen,
urlcolor=darkred,
citecolor=darkgreen}

% Reflb started

\makeatletter
\def\reflb#1#2{\begingroup
    #2%
    \def\@currentlabel{#2}%
    \phantomsection\label{#1}\endgroup
}
\makeatother

% Reflb finished

%%%%%%%%%%%%%%%%%%%%%%%%%%%%%%%%%%%%%%%%%%%%%%%%%%%%%%%%%%%%%%%%%
%Before printing a "clean" version of the paper,
%       we must comment out the line:
%
%\newcommand{\printname}[1]
%       {\smash{\makebox[0pt]{\hspace{-1.0in}\raisebox{8pt}{\tiny #1}}}}
%\newcommand{\labell}[1] {\label{#1}\printname{#1}}
%
%
%       and un-comment the line:
%

%%%%%%%%%%%%%%%%%%%%%%%%%%%%%%%%%%%%%%%%%%%%%%%%%%%%%%%%%%%%%%%%%

%comments:

\numberwithin{equation}{section}
\newtheorem {Theorem}{Theorem}
%\numberwithin{Theorem}{chapter}
\numberwithin{Theorem}{section}

\newtheorem {Lemma}[Theorem]    {Lemma}

\newtheorem {Conjecture}[Theorem]    {Conjecture}
\newtheorem {Proposition}[Theorem]{Proposition}
\newtheorem {Corollary}[Theorem]{Corollary}
\theoremstyle{definition}
\newtheorem{Definition}[Theorem]{Definition}
\theoremstyle{remark}
\newtheorem{Remark}[Theorem]{Remark}
\newtheorem{Example}[Theorem]{Example}

%%%%%%%%%%%%%%%%%%%%%%%%%%%%%%%
%%  MATH DEFINITIONS

\def    \eps    {\epsilon}

\newcommand{\CA}{{\mathcal A}}
\newcommand{\tCA}{\tilde{\mathcal A}}

\newcommand{\CM}{{\mathcal M}}

\newcommand{\CO}{{\mathcal O}}
\newcommand{\CR}{{\mathcal R}}
\newcommand{\CS}{{\mathcal S}}
\newcommand{\CSI}{{\mathcal S}_{\mathit ind}}

\newcommand{\cCS}{{\check{\mathcal S}}}
\newcommand{\cCSI}{{\check{\mathcal S}}_{\mathit ind}}

\newcommand{\supp}{\operatorname{supp}}

\newcommand{\Ham}{{\mathit{Ham}}}
\newcommand{\tHam}{{\widetilde{\mathit{Ham}}}}

\newcommand{\id}{{\mathit id}}
\newcommand{\pt}{{\mathit pt}}
\newcommand{\const}{{\mathit const}}

\newcommand{\ta}{\tilde{a}}

\newcommand{\hx}{\hat{x}}

\newcommand{\tI}{\tilde{I}}
\newcommand{\tK}{\tilde{K}}
\newcommand{\tB}{\tilde{B}}
\newcommand{\tU}{\tilde{U}}
\newcommand{\tu}{\tilde{u}}
\newcommand{\tv}{\tilde{v}}

\newcommand{\tvarphi}{\tilde{\varphi}}

\newcommand{\tH}{\tilde{H}}

\newcommand{\tA}{\tilde{\mathcal A}}

\newcommand{\PP}{{\mathcal P}}
\newcommand{\bPP}{\bar{\mathcal P}}

\def    \nat    {{\natural}}
\def    \F      {{\mathbb F}}

\def    \C      {{\mathbb C}}
\def    \R      {{\mathbb R}}

\def    \Z      {{\mathbb Z}}
\def    \N      {{\mathbb N}}
\def    \Q      {{\mathbb Q}}

\def    \T      {{\mathbb T}}
\def    \CP     {{\mathbb C}{\mathbb P}}

\def    \12    {{\frac{1}{2}}}

\def    \p      {\partial}
\def    \codim  {\operatorname{codim}}

\def    \Area  {\operatorname{Area}}

\def    \im     {\operatorname{im}}

\def    \SU     {\operatorname{SU}}

\def    \HF     {\operatorname{HF}}

\def    \HQ     {\operatorname{HQ}}

\def    \H     {\operatorname{H}}

\def    \CF     {\operatorname{CF}}

\def    \vol     {\operatorname{vol}}

\def    \bPP     {\bar{\mathcal{P}}}
\def    \bx     {\bar{x}}
\def    \by     {\bar{y}}
\def    \bz     {\bar{z}}

\def    \vp   {\vec{p}}
\def    \vr   {\vec{r}}
\def    \vDelta    {\vec{\Delta}}

\def    \vtheta     {\vec{\theta}}

\def  \hmu      {\operatorname{\hat{\mu}}}

\def    \s  {\operatorname{c}}

\def \inv   {\mathrm{inv}}
\def \qq   {\mathrm{q}}

\def    \chom {\operatorname{c_{\scriptscriptstyle{hom}}}}

%\newcommand{\CH}{{\mathcal H}}

%%%%%%%%%%%%%%%%%%%%%%%%%%%%%%%

\begin{document}

%%%%%%%%%%%%%%%%%%%%%%%%%%%%%%
%   TEXT FORMATTING

\setlength{\smallskipamount}{6pt}
\setlength{\medskipamount}{10pt}
\setlength{\bigskipamount}{16pt}

%%%%%%%%%%%%%%%%%%%%%%%%%%

%%%%%%%%%%%%%%%%%%%%%%%%%%

%%%%%%%%%%%           BEGINNING OF  TEXT

%%%%%%%%%%%%%%%%%%%%%%%%%%

\title[Pseudo-Rotations of Projective Spaces]{Hamiltonian Pseudo-Rotations
  of Projective Spaces}

\author[Viktor Ginzburg]{Viktor L. Ginzburg}
\author[Ba\c sak G\"urel]{Ba\c sak Z. G\"urel}

\address{BG: Department of Mathematics, UCF,
  Orlando, FL 32816, USA} \email{basak.gurel@ucf.edu}

\address{VG: Department of Mathematics, UC Santa Cruz, Santa Cruz, CA
  95064, USA} \email{ginzburg@ucsc.edu}

\subjclass[2010]{53D40, 37J10, 37J45} 

\keywords{Pseudo-rotations, Periodic orbits, Hamiltonian
  diffeomorphisms, Floer homology}

\date{\today} 

\thanks{The work is partially supported by NSF CAREER award
  DMS-1454342 (BG) and NSF grant DMS-1308501 (VG)}

%\bigskip

\begin{abstract} The main theme of the paper is the dynamics of
  Hamiltonian diffeomorphisms of $\CP^n$ with the minimal possible
  number of periodic points (equal to $n+1$ by Arnold's conjecture),
  called here Hamiltonian pseudo-rotations. We prove several results
  on the dynamics of pseudo-rotations going beyond periodic orbits,
  using Floer theoretical methods. One of these results is the
  existence of invariant sets in arbitrarily small punctured
  neighborhoods of the fixed points, partially extending a theorem of
  Le Calvez and Yoccoz and of Franks and Misiurewicz to higher
  dimensions. The other is a strong variant of the Lagrangian
  Poincar\'e recurrence conjecture for pseudo-rotations. We also prove
  the $C^0$-rigidity of pseudo-rotations with exponentially Liouville
  mean index vector. This is a higher-dimensional counterpart of a
  theorem of Bramham establishing such rigidity for pseudo-rotations
  of the disk.
\end{abstract}

\maketitle

%\bigskip

\tableofcontents

%\newpage

\section{Introduction}
\label{sec:intro+results}

The main theme of the paper is the dynamics of Hamiltonian
diffeomorphisms of $\CP^n$, equipped with the standard symplectic
structure, with exactly $n+1$ periodic points, called here Hamiltonian
pseudo-rotations. By Arnold's conjecture established in this case by
Fortune and Weinstein, \cite{Fo, FW}, and by Floer, \cite{Fl}, this is
the minimal possible number of fixed points and, in particular, of
periodic points.

We prove several results on the dynamics of such maps. One is the
existence of invariant sets in arbitrarily small punctured
neighborhoods of the fixed points, partially extending the results of
Le Calvez and Yoccoz and of Franks and Misiurewicz from \cite{Fr99, FM,
  LCY} to higher dimensions. Another result is a strong variant of the
Lagrangian Poincar\'e recurrence conjecture for pseudo-rotations. We
also prove the $C^0$-rigidity of pseudo-rotations with exponentially
Liouville mean index vector. This is a $2n$-dimensional analog of
Bramham's theorem from \cite{Br}.

Perhaps the most important general point of this paper is that rather
unexpectedly one can obtain a lot information about the dynamics of
pseudo-rotations in dimensions greater than two, going far beyond
periodic orbits, by purely Floer theoretical methods. Before turning
to the precise statements of sample results we need to discuss the
notion of a Hamiltonian pseudo-rotation -- the key player in this work
-- in a greater generality and more detail.

\subsection{Hamiltonian pseudo-rotations}
\label{sec:PRs}
In the framework of two-dimensional dynamics, pseudo-rotations are
area preserving diffeomorphisms of $D^2$ or $S^2$ with exactly one or
two periodic points, respectively, which are then automatically the
fixed points. There are several ways of extending this notion to
higher dimensions in the context of Hamiltonian dynamics and
symplectic topology.

For instance, one can define a Hamiltonian pseudo-rotation of a closed
symplectic manifold $(M^{2n},\omega)$ as a Hamiltonian diffeomorphism
$\varphi$ with finite and minimal possible number of periodic points,
and such that the periodic points are the fixed points. This is the
definition we prefer to use here even though the notion of the minimal
possible number is ambiguous. However, when $M$ is sufficiently nice
and Arnold's conjecture is known to hold for $M$, this can be the sum
of Betti numbers of $M$ when $\varphi$ is non-degenerate, and a
Lusternik--Schnirelmann type lower bound (e.g., the category or the
cup-length plus one) in general. For $\CP^n$ both lower bounds are
$n+1$; hence Definition \ref{def:PR}.

One can also define a pseudo-rotation as a Hamiltonian diffeomorphism
with finitely many periodic points. These are the so-called perfect
Hamiltonian diffeomorphisms studied in the context of the Conley
conjecture; see, e.g., \cite{CGG, GG:survey} and references
therein. As follows from the results in \cite{Fr92, Fr96, FH, LeC}, in
dimension two this definition is equivalent to the one adopted
here. We will further discuss the difference or a lack thereof between
pseudo-rotations and perfect Hamiltonian diffeomorphisms below, but at
the moment we only mention that in all known examples perfect
Hamiltonian diffeomorphisms are non-degenerate pseudo-rotations.

It is believed that rather few manifolds admit
pseudo-rotations. Namely, the Conley conjecture asserts that for a
broad class of closed symplectic manifolds every Hamiltonian
diffeomorphism has infinitely many simple (a.k.a.\ prime, i.e., not
iterated) periodic orbits. At the time of writing, the conjecture has
been established for all symplectic manifolds $(M^{2n},\omega)$ such
that there is no class $A\in\pi_2(M)$ with $\omega(A)>0$ and
$\left<c_1(TM),A\right>>0$; see \cite{GG:Rev}.  In particular, the
conjecture holds for all symplectic CY manifolds, \cite{GG:gaps,
  He:irr}, and all negative monotone symplectic manifolds, \cite{CGG,
  GG:nm}. (See also \cite{FH, Gi:CC, Hi, LeC, SZ} for some milestone
intermediate results and \cite{GG:survey} for a general survey and
further references.) One can also show that $N\leq 2n$ when $M$ admits
a pseudo-rotation, where $N$ is the minimal Chern number of $M$,
although it is not yet known if the Conley conjecture holds whenever
$N>2n$.

On the other hand, all symplectic manifolds $M$ which carry
Hamiltonian circle (or torus) actions with isolated fixed points also
admit pseudo-rotations. Indeed, then a generic element of the circle
or torus induces a pseudo-rotation of $M$. In particular, $\CP^n$, the
complex Grassmannians and flag manifolds, or more generally a majority
of coadjoint orbits of compact Lie groups, and symplectic toric
manifolds admit pseudo-rotations. For $\CP^n$ identified with the
quotient of the unit sphere $S^{2n+1}\subset\C^{n+1}$, such a
pseudo-rotation is a true rotation, by which we mean an element of
$\SU(n)$, and can be generated by a quadratic Hamiltonian
$Q=\sum a_i|z_i|^2$ where $a_i-a_j\not\in\Q$ for $i \neq j$. (The
combinatorics of rotations is not entirely straightforward and we will
examine it more closely in \cite{GG:PR2}; see also Examples
\ref{ex:cpn} and \ref{ex:cpn-2}.)

However, at least for $S^2$, not every pseudo-rotation is a true
rotation.  Indeed, diffeomorphisms arising as generic elements of
$S^1$-actions have simple, essentially trivial dynamics.  This is in
general not the case for pseudo-rotations, and pseudo-rotations occupy
a special place among low-dimensional dynamical systems. For instance,
in \cite{AK} Anosov and Katok constructed area preserving
diffeomorphisms $\varphi$ of $S^2$ with exactly three ergodic
measures, the area form and the two fixed points, by developing what
is now known as the conjugation method; see also \cite{FK} and
references therein. Such a diffeomorphism $\varphi$ is automatically a
pseudo-rotation. Indeed, $\varphi$ is area preserving and hence
Hamiltonian, and $\varphi$ has exactly two periodic orbits, which are
its fixed points.  Furthermore, $\varphi$ is ergodic, necessarily has
dense orbits, and thus is not conjugate to a true rotation.  As a
consequence, the products $(S^2)^n$ also admit pseudo-rotations which
are not conjugate to true rotations.

The situation is more complicated for projective spaces. It is
believed that $\CP^n$ and other toric manifolds admit dynamically
interesting pseudo-rotations, e.g., ergodic or even with finite number
of ergodic measures. However, in the Hamiltonian setting, the
conjugation method encounters a conceptual difficulty along the lines
of the symplectic packing or flexibility questions. This difficulty
disappears for $\CP^2$ due to a theorem of McDuff asserting that any
two symplectic embeddings of any fixed collection of closed balls
into $\CP^2$ are Hamiltonian isotopic; see \cite{McD98,
  McD:ellipsoids}. As a consequence, in this case the problem becomes
more technical than conceptual, although still quite non-trivial due
to the effect of the fixed points. A parallel question in the
symplectomorphism case was studied in \cite{HCP} where a different
variant of the conjugation method introduced in \cite{FaHe} was used
to construct minimal symplectomorphisms of symplectic manifolds
admitting symplectic $S^1$-actions without fixed points.

We conclude this section by elaborating on the conjecture that every
perfect Hamiltonian diffeomorphism, i.e., a Hamiltonian diffeomorphism
with finitely many periodic points, is automatically a
pseudo-rotation, i.e., it has the minimal possible number of periodic
points, and all such points are fixed points. (Moreover,
hypothetically every perfect Hamiltonian diffeomorphism is strongly
non-degenerate and all its periodic orbits, which are then fixed
points, are elliptic.) For instance, for $\CP^n$, according to this
conjecture every Hamiltonian diffeomorphism with more than $n+1$ fixed
points must have infinitely many periodic orbits. As has been pointed
out above, in dimension two this fact is essentially the combination
of the celebrated theorem of Franks (see \cite{Fr92, Fr96, LeC})
asserting that every area preserving diffeomorphism of $S^2$ with more
than two fixed points has infinitely many periodic orbits and the
Conley conjecture for surfaces proved in \cite{FH}. (See also
\cite{CKRTZ} for a Floer theoretical proof of Franks' theorem and
\cite{BH} for an approach utilizing finite energy foliations.) The
earliest mentioning of this hypothetical generalization of Franks'
theorem known to us is on p.\ 263 in~\cite{HZ}.

One can also make similar conjectures for symplectomorphisms and other
types of ``Hamiltonian dynamical systems''. Furthermore, looking at
the question from a broader perspective one may expect that the
presence of a periodic orbit which is homologically or geometrically
unnecessary (e.g., degenerate, non-contractible, hyperbolic) forces
the system to have infinitely many periodic orbits. We refer the
reader to \cite{Ba1, Ba2, GG:hyperbolic, GG:nc, Gu:nc, Gu:hq, Or1,
  Or2} (and also to Theorem \ref{thm:isolated}) for some recent
results in this direction in dimensions greater than~two.

\subsection{Sample results}
\label{sec:results} In this section, we state without detailed
discussion and in some instances in a simplified form the key results
of the paper. This is a ``sampler plate'' and a much more thorough
treatment is given in Sections \ref{sec:sets} and
\ref{sec:PR+PO}. Here and throughout the paper, $\CP^n$ is equipped
with the standard (sometimes up to a factor) Fubini--Study symplectic
structure.

\begin{Definition}
\label{def:PR}
A pseudo-rotation of $\CP^n$ is a Hamiltonian diffeomorphism of $\CP^n$
with exactly $n+1$ periodic points.
\end{Definition}

Our first theorem concerns invariant sets of pseudo-rotations. As has
been mentioned above, the pseudo-rotations $\varphi$ of $S^2$
constructed by the conjugation method in \cite{AK} have exactly three
ergodic measures: the two fixed points and the area form. As a
consequence, such pseudo-rotations are very close to being uniquely
ergodic: $\varphi$ is uniquely ergodic on the complement to the two
fixed points. Recall also that volume preserving uniquely ergodic maps
of compact manifolds are necessarily minimal, i.e., all orbits are
dense; see, e.g., \cite{Wa}. These facts suggest that every orbit of
$\varphi$ other than a fixed point should be dense. (Furthermore,
minimal symplectomorphisms exist in abundance and can also be
constructed by the conjugation method; \cite{HCP}.) However, this
turns out to be false and $\varphi$ must have many proper, closed
invariant subsets as proved in \cite[Prop.\ 5.5]{FM}, which is in turn
inspired by \cite{LCY} and based on the approach developed in
\cite{Fr99}. Our first theorem is a partial generalization of these
results to higher dimensions:

\begin{Theorem}
\label{thm:inv_sets0}
No fixed point of a pseudo-rotation of $\CP^n$ is isolated as an
invariant set.
\end{Theorem}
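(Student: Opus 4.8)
The plan is to argue by contradiction. Suppose that a fixed point $x$ of the pseudo-rotation $\varphi$ is isolated as an invariant set, i.e.\ that some closed neighborhood $U$ of $x$ satisfies $\bigcap_{k\in\Z}\varphi^k(U)=\{x\}$; I will produce infinitely many periodic orbits of $\varphi$, contradicting the definition of a pseudo-rotation.

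Two reductions come first. Every iterate $\varphi^k$ is again a pseudo-rotation with the same $n+1$ fixed points $x_0,\dots,x_n$, because a fixed point of $\varphi^k$ is a periodic point of $\varphi$ and hence a fixed point of $\varphi$. Next I use the standard fact about pseudo-rotations of $\CP^n$ that each local Floer homology group $\HFL(\varphi^k,x_i)$ is one-dimensional and concentrated in a single degree $d_i(k)$: the $n+1$ fixed points carry all of $\HF_*(\CP^n)$, which has total rank $n+1$ with one generator in each of the even degrees $0,2,\dots,2n$ modulo $2(n+1)$ and hence no room for a Floer differential, so a carrier argument yields the one-dimensionality and makes $\bigl(d_0(k),\dots,d_n(k)\bigr)$ a permutation of $(0,2,\dots,2n)$ for every $k$. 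In particular $x$ is an isolated $1$-periodic orbit of $\varphi^k$, with one-dimensional local Floer homology, for all $k$.

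The core step is to convert the topological isolation of $\{x\}$ into a Floer-theoretic rigidity statement that is uniform in $k$. Since an isolating neighborhood is stable under $C^0$-small perturbations, I would first establish a barricade/crossing-energy estimate: there are a smaller neighborhood $V\ni x$ and a constant $\hbar>0$ such that, for all $k$ and every sufficiently small generic Hamiltonian perturbation, any Floer cylinder of $\varphi^k$ asymptotic to the iterated orbit $x^{(k)}$ that reaches $\partial V$ has energy at least $\hbar$. It then follows that $x^{(k)}$ contributes its entire local Floer homology, in the degree $d_x(k)$ supporting it, to the Floer homology of $\varphi^k$ filtered by the action window $\bigl(k\CA(x)-\hbar,\ k\CA(x)+\hbar\bigr)$, and that this contribution persists into $\HF_*(\CP^n)$; in effect the barricade identifies $\HFL(\varphi^k,x)$ with the Conley index homology of the isolated invariant set $\{x\}$ for $\varphi^k$. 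Since $\varphi$ is volume preserving, $\{x\}$ is neither an attractor nor a repeller, so this Conley index is of genuinely mixed type, and, up to the shift of the Floer grading by the mean index of $x$, it does not change on passing from $\varphi$ to $\varphi^k$. One-dimensionality of $\HFL(\varphi^k,x)$ for all $k$ forces that grading shift to be trivial modulo $2(n+1)$ — so $x$ has a resonant, integral mean index — and forces $\{x\}$ to be a homologically unnecessary fixed point; it cannot be a symplectically degenerate maximum, as those carry nearby invariant sets and are therefore never isolated as invariant sets.

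Finally, a fixed point of a Hamiltonian diffeomorphism of $\CP^n$ that is homologically unnecessary in this sense — hyperbolic, or more generally of integral mean index with ``interior'' local Floer homology — forces $\varphi$ to have infinitely many periodic orbits, by \cite{GG:hyperbolic} and its refinements covering degenerate orbits. This is the desired contradiction. The step I expect to be hardest is the barricade estimate and, especially, its uniformity in $k$: turning the purely topological hypothesis that $\{x\}$ is an isolated invariant set into an energy gap that persists under every iteration, which in particular requires controlling Floer cylinders that shadow the local dynamics near $x$ for a long time before escaping $V$. A secondary difficulty is the rigorous comparison between $\HFL(\varphi^k,x)$ and the Conley index and the extraction from it of the integrality of the mean index; the degenerate case $\CA(x)=0$ and the bookkeeping of action values modulo the monotonicity constant of $\CP^n$ are routine.
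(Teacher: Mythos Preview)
Your overall architecture matches the paper: argue by contradiction, establish a crossing-energy lower bound for Floer cylinders asymptotic to $x^k$ that is \emph{uniform in $k$} (this is exactly Theorem~\ref{thm:energy} here, and you are right that it is the technical heart), and then feed this into a generalization of \cite{GG:hyperbolic} to conclude infinitely many periodic orbits. You also correctly note that $\varphi^k$ is again a pseudo-rotation and that $\HF(x^k)\neq 0$ for all $k$.

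The gap is in the middle paragraph. First, the ``standard fact'' that each $\HFL(\varphi^k,x_i)$ is one-dimensional is only clear when $\varphi$ is non-degenerate; pseudo-rotations are not assumed non-degenerate, and the paper never asserts one-dimensionality --- only non-vanishing, via the action-carrier Lemma~\ref{lemma:ac}. Second, and more seriously, the Conley-index step does not do what you want: one-dimensionality of $\HFL(\varphi^k,x)$ for all $k$ does \emph{not} force the mean index to be integral. A non-degenerate elliptic fixed point with irrational mean index already has one-dimensional local Floer homology under every iterate, so your conclusion ``$x$ has a resonant, integral mean index'' fails in the most basic case. Without integrality you cannot reduce to anything like the hyperbolic situation of \cite{GG:hyperbolic}. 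Third, the ``refinements covering degenerate orbits'' you invoke at the end are precisely Theorem~\ref{thm:isolated} of this paper, so that step is circular.

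What the paper actually does to bridge the crossing-energy estimate to a contradiction is quite different from a Conley-index/mean-index argument. It uses the quantum product relation $\alpha*\beta=\qq[M]$ in $\HQ_*(\CP^n)$ together with the observation (Lemma~\ref{lemma:trivial}) that the cap product $\Phi_\alpha$ acts \emph{trivially} on local Floer homology whenever $|\alpha|<2n$. Acting by $\alpha$ and then $\beta$ on the class $[\bx^k]$ must therefore produce an intermediate class supported on orbits \emph{outside} the isolating neighborhood, which manufactures Floer trajectories that genuinely cross $\partial U$; combining their energy lower bound $c_\infty$ with the total action drop $\lambda$ over the recapping and a Dirichlet-type choice of $k$ clustering the action spectrum yields the contradiction. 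No integrality of the mean index and no Conley-index comparison enter.
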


This theorem is proved in Section \ref{sec:sets}. The proof hinges on
a result of independent interest asserting that a Hamiltonian
diffeomorphism of $\CP^n$ with a fixed point which is isolated as an
invariant set and has non-vanishing local Floer homology must have
infinitely many periodic orbits. This is Theorem \ref{thm:isolated}
extending \cite[Thm.\ 1.1]{GG:hyperbolic} to degenerate periodic
orbits and proved in Section \ref{sec:energy-isolated}.
 
Our next result concerns the Lagrangian Poincar\'e recurrence
conjecture put forth by the first author and independently by Viterbo
around 2010. According to this conjecture the images of a Lagrangian
submanifold $L$ under the iterations of a Hamiltonian diffeomorphism
with compact support cannot be all disjoint. In dimension two, this is
an easy consequence of the standard Poincar\'e recurrence combined
with the observation that when $L$ does not bound, it cannot be
displaced by a Hamiltonian diffeomorphism. However, in higher
dimensions the assertion is not obvious even for a particular
Hamiltonian diffeomorphism unless, of course, it is periodic. We
discuss Lagrangian Poincar\'e recurrence in more detail in Section
\ref{sec:PR}, where we establish the conjecture for pseudo-rotations
of $\CP^n$ and a sufficiently broad class of Lagrangians $L$. In
particular, we prove the following.

\begin{Theorem}
\label{thm:LPR0}
Let $\varphi$ be a pseudo-rotation of $\CP^n$ and let $L\subset \CP^n$
be a closed Lagrangian submanifold admitting a metric without
contractible closed geodesics. Then $\varphi^k(L)\cap L\neq \emptyset$
for infinitely many $k\in \N$.
\end{Theorem}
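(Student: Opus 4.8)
The plan is to argue by contradiction, converting the assumption that $L$ is \emph{displaced} by all large iterates of $\varphi$ into a violation of the pseudo-rotation property. So suppose $\varphi^k(L)\cap L=\emptyset$ for all $k$ outside a finite set; equivalently, $\varphi^k$ displaces $L$ from itself for all large $k$, so that the Lagrangian Floer complex of the pair $(L,\varphi^k(L))$ is empty for those $k$.

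First I would extract from the pseudo-rotation hypothesis the rigidity of the iterates used on the Hamiltonian side. Since $\varphi$ has only $n+1$ periodic points, its filtered Floer homology is carried on $n+1$ generators, so the action spectrum $\CS(\varphi^k)$ of each iterate is, up to the recollement ambiguity by the monotonicity constant, a set of at most $n+1$ real numbers whose relative positions are controlled by the mean index vector of $\varphi$; in particular one gets a uniform bound $\gamma(\varphi^k)\le C=C(\varphi)$ on the spectral norm. (This is part of the general theory of pseudo-rotations of $\CP^n$; see the discussion preceding this section and Section~\ref{sec:PR}.) Hence any Lagrangian spectral or persistence invariant attached to $(L,\varphi^k(L))$ has ``width'' in the action filtration bounded \emph{independently of $k$}.

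The second input comes from the geodesic hypothesis. A metric on $L$ with no contractible closed geodesics forces $L$ to be aspherical: the energy functional on the space of contractible loops then has only the constants as critical points, so that space retracts onto the constant loops, forcing the universal cover of $L$ to be contractible. Consequently the free loop space $\Lambda L$ has nonzero rational homology supported at arbitrarily large values of the length (equivalently energy, hence action) filtration, coming from the nontrivial free homotopy classes of $L$, whose minimal lengths are unbounded. Using a Weinstein neighbourhood of $L$ in $\CP^n$ and an Abbondandolo--Schwarz type comparison, one identifies, inside any action window of width below the minimal symplectic area $\hbar=\hbar(L)>0$ of a nonconstant $J$-holomorphic sphere meeting $L$ or disk with boundary on $L$, the filtered Lagrangian Floer homology of $L$ with the corresponding piece of $\H_*(\Lambda L)$; curves escaping the neighbourhood only shift the picture by multiples of $\hbar$.

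Putting the two together should yield the contradiction: continuation from $\id$ to $\varphi^k$ moves the filtered Lagrangian Floer homology of $L$ with action cost bounded by $\gamma(\varphi^k)\le C$, up to bubbling corrections that are multiples of $\hbar$, yet by displaceability this continuation lands in the zero module, so all of the loop-space homology detected above would have to be annihilated within an action window of width $O(C+\hbar)$ uniformly in $k$; since that homology is genuinely spread over an unbounded range of actions, and passing to $\varphi^k$ only translates and mildly distorts the relevant window, for $k$ large a class of $\H_*(\Lambda L)$ must survive, producing either an honest intersection point of $\varphi^k(L)$ with $L$ (contradicting displaceability) or, via a neck-stretching/continuation argument in the spirit of the Conley-conjecture machinery, an extra $k$-periodic orbit of $\varphi$ (contradicting that $\varphi$ is a pseudo-rotation). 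The heart of the matter, and the step I expect to be the main obstacle, is exactly this comparison: building the filtered Lagrangian Floer homology of $L\subset\CP^n$ so that it faithfully reflects the loop-space homology of $L$ in windows below the bubbling threshold, and bookkeeping precisely how the $k$-independent bound on $\gamma(\varphi^k)$, the threshold $\hbar$, and the behaviour of the action window under iteration interact, i.e.\ upgrading the no-contractible-closed-geodesics hypothesis from a qualitative to a quantitative statement; a secondary difficulty is the degenerate case, where honest intersections must be replaced by local Lagrangian Floer homology and index-recurrence arguments.
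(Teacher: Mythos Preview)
Your approach has a genuine gap, and it is worth seeing why by comparison with the paper's argument.

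The step where you extract from the pseudo-rotation hypothesis ``a uniform bound $\gamma(\varphi^k)\le C=C(\varphi)$'' is not where the pseudo-rotation condition enters: such a bound holds for \emph{every} Hamiltonian diffeomorphism of $\CP^n$, with $C=\lambda$ (see Remark~\ref{rmk:gamma}). Consequently, if the remainder of your argument went through, it would establish Lagrangian Poincar\'e recurrence for \emph{all} Hamiltonian diffeomorphisms of $\CP^n$ and Lagrangians $L$ of this type --- i.e.\ the full Conjecture~\ref{conj:LPR} in this setting, which is open. This already signals that the bridge you are trying to build, from a uniform $\gamma$-bound plus loop-space homology spread over large action values to a contradiction, cannot close as stated. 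Concretely, the displacement of $L$ by $\varphi^k$ only tells you that the Lagrangian Floer homology of $(L,\varphi^k(L))$ vanishes; to leverage the loop-space classes you would need a filtered comparison whose action cost genuinely depends on $k$, and nothing in your outline produces that.

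The paper's route is both simpler and uses the hypothesis in the right place. The pseudo-rotation condition forces $\gamma(\varphi^{k_i})\to 0$ along a subsequence (Corollary~\ref{cor:gamma}, proved via Theorem~\ref{thm:gamma} using the action--index identification \eqref{eq:equal-spectra} and equidistribution of $k\vDelta$ in the torus). The metric hypothesis on $L$ is used only to guarantee $\chom(L)>0$ (Example~\ref{lem:chom}). One then invokes the energy--capacity inequality \eqref{eq:chom-gamma}: whenever $\gamma(\varphi^{k_i})<\chom(L)$, the map $\varphi^{k_i}$ cannot displace any neighborhood of $L$, hence $\varphi^{k_i}(L)\cap L\neq\emptyset$. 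No Lagrangian Floer theory, loop-space comparison, or degenerate-case analysis is needed.
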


This theorem is a consequence of Corollary \ref{sec:gamma}
($\gamma$-norm convergence) asserting that
$\gamma(\varphi^{k_i})\to 0$ for some sequence $k_i\to\infty$, where
$\gamma$ is the $\gamma$-norm. In fact, in Theorem \ref{thm:LPR} we
establish a stronger version of the Lagrangian Poincar\'e recurrence
conjecture relating the return rate, i.e., the frequency of
intersections $\varphi^k(L)\cap L$, and the homological capacity
$\chom(L)$ of $L$. Moreover, the result holds for all compact subsets
$L$ with $\chom(L)>0$. The proof is based on a quantitative version of
the $\gamma$-norm convergence; see Section \ref{sec:gamma} and Theorem
\ref{thm:gamma}.

Finally, our third result is $C^0$-rigidity of pseudo-rotations of
$\CP^n$. This is a higher-dimensional counterpart of the main theorem
of \cite{Br}.  Let $\varphi$ be a Hamiltonian pseudo-rotation of
$\CP^n$ with fixed points $x_0,\ldots,x_n$. These are also its
periodic points. The mean index ``vector'' of $\varphi$ is by
definition
$$
\vDelta=\big(\hmu(x_0),\ldots,\hmu(x_n)\big)\in
\T^{n+1}=\R^{n+1}/2(n+1)\Z^{n+1},
$$
where we treat the mean indices $\hmu(x_i)$ of uncapped one-periodic
orbits as elements of the circle $\R/ 2(n+1)\Z$; see Section
\ref{sec:background} for more details. We say that $\vDelta$ is
exponentially Liouville if the iterates $k\vDelta$ approximate
$0\in \T^{n+1}$ exponentially accurately: for any $c>0$ there exists
$k\in \N$ such that $\|k\vDelta\|<e^{-ck}$, where $\|\cdot\|$ is the
distance to $0$; see Definition \ref{def:exp-L}. In $\T^{n+1}$ or any
closed subgroup of $\T^{n+1}$, exponentially Liouville elements form a
zero measure, residual set (i.e., a countable intersection of open and
dense sets).

\begin{Theorem}
\label{thm:exp-L0}
For every pseudo-rotation $\varphi$ of $\CP^n$ with exponentially
Liouville mean index vector $\vDelta$, there exists a sequence
$k_i\to\infty$ such that
$\varphi^{k_i}\stackrel {C^0}{\longrightarrow}\id$.
\end{Theorem}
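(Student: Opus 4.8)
The plan is to deduce the $C^0$-convergence $\varphi^{k_i}\to\id$ from a convergence statement in a stronger, more algebraic metric — the spectral $\gamma$-norm — together with control over the action spectrum coming from the pseudo-rotation hypothesis. Recall that Theorem \ref{thm:LPR0} already rests on a result (advertised in Section \ref{sec:gamma} as Corollary \ref{sec:gamma} and Theorem \ref{thm:gamma}) giving $\gamma(\varphi^{k_i})\to 0$ along some sequence $k_i\to\infty$; the exponentially Liouville condition on $\vDelta$ should be exactly what is needed to promote this to a quantitative rate, namely $\gamma(\varphi^{k_i})\le e^{-ck_i}$ for every prescribed $c$, or at least faster than any fixed exponential. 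So the first step is to invoke the quantitative $\gamma$-norm estimate of Section \ref{sec:gamma}: for a pseudo-rotation of $\CP^n$, the spectral norm of $\varphi^k$ is governed by how well $k\vDelta$ approximates $0$ in $\T^{n+1}$, and exponential Liouvilleness forces $\gamma(\varphi^{k_i})$ to decay super-exponentially along a suitable subsequence.

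The second, and genuinely new, step is the passage from spectral smallness to $C^0$-smallness. In general $\gamma\to 0$ does not imply $C^0\to\id$ — that would contradict the existence of symplectic homeomorphisms far from the identity — so one must use the pseudo-rotation hypothesis again, this time to rule out ``hidden'' motion. Here the natural mechanism is the interplay between the spectral norm and the displacement/capacity of small balls: if $\varphi^{k_i}$ moved some point a definite distance, one could find a small ball $B$ displaced by $\varphi^{k_i}$, and then $\gamma(\varphi^{k_i})\ge c(B)>0$ by the energy–capacity inequality for the spectral norm. This already gives $C^0$-convergence on the complement of the fixed points. To handle neighborhoods of the fixed points $x_0,\dots,x_n$ one uses that the $\gamma$-norm also controls the spectral invariants attached to the local Floer homology / the action gaps at the fixed points: when $\gamma(\varphi^{k_i})$ is smaller than the minimal ``action resolution'' near $x_j$ coming from the mean index data, the iterate cannot push mass across a barrier surrounding $x_j$, so points near $x_j$ stay near $x_j$. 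Combining the two regimes — away from the fixed points via energy–capacity, near the fixed points via the action-spectrum barrier — yields a uniform $C^0$-bound, hence $\varphi^{k_i}\stackrel{C^0}{\to}\id$.

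The main obstacle I expect is precisely this second step near the fixed points: controlling the dynamics in arbitrarily small punctured neighborhoods of $x_0,\dots,x_n$ purely from the spectral data. Away from the fixed points the energy–capacity inequality is robust and quantitative, but close to a fixed point the displacement energy of a ball degenerates to $0$, so one cannot conclude $C^0$-smallness from $\gamma$-smallness alone without extra input. The resolution should be to exploit that a pseudo-rotation has an \emph{isolated} action value / a mean-index-controlled gap at each fixed point (this is the same structural fact underlying Theorems \ref{thm:inv_sets0} and \ref{thm:isolated}), and that the quantitative $\gamma$-estimate one gets from exponential Liouvilleness beats this gap for large $i$; then a barrier argument — a point moved out of a neighborhood of $x_j$ would force a Floer trajectory crossing a fixed action window, contradicting the tiny spectral norm — finishes it. Making the ``barrier'' precise, i.e.\ extracting from $\gamma(\varphi^{k_i})<\epsilon_j$ the statement that $\varphi^{k_i}$ maps a fixed small neighborhood of $x_j$ into a slightly larger one, is the technical heart and is where the careful bookkeeping of capped orbits, action values, and the normalization of $\vDelta$ in $\T^{n+1}=\R^{n+1}/2(n+1)\Z^{n+1}$ must be done.
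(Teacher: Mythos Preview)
Your overall strategy --- bound $\gamma(\varphi^{k_i})$ via the mean-index vector, then upgrade to $C^0$-smallness --- is viable and is in fact mentioned in the paper as an alternative route (a remark attributed to Seyfaddini after the proof of Theorem~\ref{thm:exp-L}). However, you have misdiagnosed the obstacle in the second step, and your proposed fix via ``action barriers near fixed points'' is a red herring.

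The real issue is \emph{not} a dichotomy between points near and far from the fixed set. The energy--capacity argument has the same difficulty everywhere: if $d(p,\varphi^{k}(p))=\delta$, the largest ball around $p$ you can guarantee is displaced has radius of order $\delta/\|D\varphi^{k}\|$, hence capacity of order $\delta^2 e^{-2C_0k}$ where $C_0$ is governed by $\|H\|_{C^2}$. So $\gamma(\varphi^{k})\to 0$, even along a subsequence, gives nothing; you need $\gamma(\varphi^{k_i})$ to decay faster than $e^{-2C_0 k_i}$. This is precisely what the exponentially Liouville hypothesis provides: by Remark~\ref{rmk:Delta-gamma} one has $\gamma(\varphi^k)\leq\const\cdot\|k\vDelta\|$, and the Liouville condition lets you choose $k_i$ with $\|k_i\vDelta\|<e^{-ck_i}$ for $c>2C_0$, forcing $\delta\to 0$ uniformly. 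No separate treatment near the fixed points is needed, and your proposed barrier mechanism (Floer trajectories crossing an action window) does not give pointwise $C^0$-control.

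The paper itself takes a different, more direct route (following Bramham): through every generic $p\in\CP^n$ there is a Floer trajectory $u$ with $u(0,0)=p$ connecting the action carriers of $[\CP^n]$ and $[\pt]$, of energy $E(u)=\gamma(H^{\nat k})$. The $L^\infty$-bound $\|\partial_s u\|\leq O(E^{1/4})$ for low-energy cylinders shows the loop $t\mapsto u(0,t)$ is $O(E^{1/4})$-close to an integral curve of $X_H$, and a Gronwall estimate over $t\in[0,k]$ yields $d(p,\varphi^k(p))\leq e^{Ck}O(E^{1/4})$. The same exponential tradeoff ($c>4C$) finishes the proof. Both arguments thus hinge on the identical balance --- arbitrary exponential decay of $\gamma$ against fixed exponential growth of derivatives --- but the paper's approach is more explicit about the geometric mechanism and avoids the capacity black box.
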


This theorem is proved in Section \ref{sec:PO}. Note that here we do
not impose any non-degeneracy requirements on $\varphi$. For $\CP^1$,
Theorem \ref{thm:exp-L0} is an easy consequence of \cite[Thm.\ 1]{Br}
where a similar result is established for pseudo-rotations of $D^2$;
see the end of Section \ref{sec:PO} for further discussion.

Although the proofs of these theorems are formally independent of each
other, the arguments share some common components. One of these,
directly entering the proofs of Theorems \ref{thm:LPR0} and
\ref{thm:exp-L0}, is a variant of Lusternik--Schnirelmann theory for
pseudo-rotations going back to \cite{GG:gaps}.  To state the key
result, note that every point in the action spectrum $\CS(H)$ of a
pseudo-rotation $\varphi_H$ of $\CP^n$ is the value of an action
selector (a spectral invariant) for some homology class in the quantum
homology $\HQ_*(\CP^n)$. Then, by Theorem \ref{thm:bijection}, the
ordering of $\CS(H)$ by the degree or, equivalently, by the
Conley--Zehnder index when $\varphi_H$ is non-degenerate agrees with
its natural ordering as a subset of $\R$. In particular, all action
values are distinct. Furthermore, $\CS(H)$ coincides with the mean
index spectrum up to scaling and a shift; see Theorem
\ref{thm:act-index} and identities \eqref{eq:act-index2} and
\eqref{eq:equal-spectra}.  We review these and other previously known
results on symplectic topology of pseudo-rotations in Section
\ref{sec:background}. The notation, conventions and general
preliminary facts are discussed in Section~\ref{sec:prelim}.

\medskip

\noindent{\bf Acknowledgements.} The authors are grateful to Barney
Bramham, Weinmin Gong, Felix Schlenk, Sobhan Seyfaddini, Egor
Shelukhin, Michael Usher, Joa Weber and the referees for useful
remarks and discussions. Parts of this work were carried out while
both of the authors were visiting the Lorentz Center (Leiden,
Netherlands) for the \emph{Hamiltonian and Reeb Dynamics: New Methods
  and Applications} workshop, during the first author's visit to NCTS
(Taipei, Taiwan) and the second author's visit to UCSC (Santa Cruz,
California).  The authors would like to thank these institutes for
their warm hospitality and support.

\section{Preliminaries}
\label{sec:prelim}
The goal of this section is to set notation and conventions, give a
brief review of Hamiltonian Floer homology and several other notions
from symplectic geometry used in the paper. We also state and prove
several facts (e.g., Lemmas \ref{lemma:spec} and \ref{lemma:trivial})
which, although quite standard, seem to be unavailable elsewhere.  The
reader may consider consulting this section only as necessary.

\subsection{Conventions and notation}
\label{sec:conventions}
Let $(M^{2n},\omega)$ be a closed symplectic manifold. Throughout most
of the paper, except parts of Sections \ref{sec:sets} and
\ref{sec:energy-isolated}, this manifold will be $\CP^n$ equipped with
the standard symplectic form $\omega$. It will be convenient however
to vary the normalization of $\omega$ and we set
$\lambda=\left<[\omega],\CP^1\right>$. In other words, $\lambda$ is
the positive generator of $\left< [\omega],\pi_2(M)\right>\subset \R$,
the rationality constant. For the standard Fubini--Study normalization
$\lambda=\pi$. Recall also that the minimal Chern number $N$, i.e.,
the positive generator of $\left< c_1(TM),\pi_2(M)\right>$, is $n+1$
for $\CP^n$. On several occasions we will work with more general
symplectic manifolds. Then $M$ is always assumed to be \emph{strictly
  monotone}, i.e.,
$[\omega]\!\mid_{\pi_2(M)}=\tau c_1(TM)\!\mid_{\pi_2(M)}\neq 0$ for
some $\tau\in\R$, and hence $M$ is rational and $N<\infty$. In
particular, $\tau=\lambda/N$.

All Hamiltonians $H$ considered in this paper are assumed to be
$k$-periodic in time, i.e., $H\colon S^1_k\times M\to\R$, where
$S^1_k=\R/k\Z$ and $k\in\N$.  When the period $k$ is not specified, it
is equal to one and $S^1=S^1_1=\R/\Z$. We set $H_t = H(t,\cdot)$ for
$t\in S^1_k$. The Hamiltonian vector field $X_H$ of $H$ is defined by
$i_{X_H}\omega=-dH$. The (time-dependent) flow of $X_H$ is denoted by
$\varphi_H^t$ and its time-one map by $\varphi_H$. Such time-one maps
are referred to as \emph{Hamiltonian diffeomorphisms}.  A one-periodic
Hamiltonian $H$ can always be treated as $k$-periodic, which we will
then denote by $H^{\nat k}$ and, abusing terminology, call
$H^{\nat k}$ the $k$th iterate of $H$.

Let $H$ and $K$ be one-periodic Hamiltonians such that $H_1=K_0$
together with $t$-derivatives of all orders. We denote by $H \nat K$
the two-periodic Hamiltonian equal to $H_t$ for $t\in [0,\,1]$ and
$K_{t-1}$ for $t\in [1,\,2]$. Thus $H^{\nat k}=H \nat \ldots \nat H$
($k$ times). More generally, when $H$ is $l$-periodic and $K$ is
$k$-periodic, $H\nat K$ is $(l+k)$-periodic. (Strictly speaking, here
we need to assume that $H_l=K_0$ again together with all
$t$-derivatives.)

Let $x\colon S^1_k\to M$ be a contractible loop. A \emph{capping} of
$x$ is an equivalence class of maps $A\colon D^2\to M$ such that
$A\mid_{S^1_k}=x$. Two cappings $A$ and $A'$ of $x$ are equivalent if
the integrals of $\omega$ and $c_1(TM)$ over the sphere obtained by
attaching $A$ to $A'$ are equal to zero. A capped closed curve
$\bar{x}$ is, by definition, a closed curve $x$ equipped with an
equivalence class of cappings. In what follows, the presence of
capping is always indicated by a bar.

The action of a Hamiltonian $H$ on a capped closed curve
$\bar{x}=(x,A)$ is
$$
\CA_H(\bar{x})=-\int_A\omega+\int_{S^1} H_t(x(t))\,dt.
$$
The space of capped closed curves is a covering space of the space of
contractible loops, and the critical points of $\CA_H$ on this space
are exactly the capped one-periodic orbits of $X_H$. The \emph{action
  spectrum} $\CS(H)$ of $H$ is the set of critical values of
$\CA_H$. This is a zero measure set; see, e.g., \cite{HZ}.

The $k$-periodic \emph{points} of $\varphi_H$ are in one-to-one
correspondence with the $k$-periodic \emph{orbits} of $H$, i.e., of
the time-dependent flow $\varphi_H^t$, which we denote by
$\PP_k(H)$. (Thus, for instance, $\PP_1(H)$ can be identified with the
fixed point set of $\varphi_H$.)  The collections of all periodic
orbits and all capped periodic orbits of $H$ will be denoted by
$\PP(H)$ and, respectively, $\bPP(H)$. Recall also that a $k$-periodic
orbit of $H$ is called \emph{simple} or \emph{prime} if it is not
iterated. The definition of the action of $H$ extends to $k$-periodic
orbits and Hamiltonians in an obvious way. Clearly, the action
functional is homogeneous with respect to iteration:
$$
\CA_{H^{\nat k}}\big(\bx^k\big)=k\CA_H(\bx),
$$
where $\bx^k$ is the $k$th iteration of the capped orbit $\bx$. (The
capping of $\bx^k$ is obtained from the capping of $\bx$ by taking its
$k$-fold cover branched at the origin.)

A $k$-periodic orbit $x$ of $H$ is said to be \emph{non-degenerate} if
the linearized return map $d\varphi_H^k \colon T_{x(0)}M\to T_{x(0)}M$
has no eigenvalues equal to one. Following \cite{SZ}, we call $x$
\emph{weakly non-degenerate} if at least one of the eigenvalues is
different from one and \emph{totally degenerate} if all eigenvalues
are equal to one. A Hamiltonian $H$ is (weakly) non-degenerate if all
its one-periodic orbits are (weakly) non-degenerate and $H$ is
\emph{strongly non-degenerate} if all periodic orbits of $H$ (of all
periods) are non-degenerate.

Let $\bar{x}=(x,A)$ be a non-degenerate capped periodic orbit.  The
\emph{Conley--Zehnder index} $\mu(\bar{x})\in\Z$ is defined, up to a
sign, as in \cite{Sa,SZ}. In this paper, we normalize $\mu$ so that
$\mu(\bar{x})=n$ when $x$ is a non-degenerate maximum (with trivial
capping) of an autonomous Hamiltonian with small Hessian. The
\emph{mean index} $\hmu(\bx)\in\R$ measures, roughly speaking, the
total angle swept by certain unit eigenvalues of the linearized flow
$d\varphi^t_H|_x$ with respect to the trivialization associated with
the capping; see \cite{Lo,SZ}. The mean index is defined even when $x$
is degenerate and depends continuously on $H$ and $\bx$ in the obvious
sense.  Furthermore,
\begin{equation}
\label{eq:mean-cz}
\big|\hmu(\bx)-\mu(\bx)\big|\leq n
\end{equation} 
and the inequality is strict when $x$ is weakly non-degenerate.  The
mean index is homogeneous with respect to iteration:
$\hmu\big(\bx^k\big)=k\hmu(\bx)$. For an uncapped orbit $x$, the mean
index $\hmu(x)$ and the action $\CA_H(x)$ are well defined as elements
of $S^1_{2N}=\R/2N\Z$ and, respectively, $S^1_\lambda=\R/\lambda\Z$;
see \eqref{eq:recap} and \eqref{eq:delta}. Likewise, when $x$ is
non-degenerate, the Conley--Zehnder index $\mu(x)$ is well defined as
an element of $\Z/2N\Z$.

\subsection{Global and local Floer homology}
\label{sec:FH}
In this subsection, we very briefly discuss, mainly to set notation,
the constructions of filtered and local Floer homology. We refer the
reader to, e.g., \cite{GG:gaps, MS, Sa, SZ} for detailed accounts and
additional references.

\subsubsection{Filtered Floer homology}
\label{sec:FFH}
Fix a ground field $\F$. Let $H$ be a non-degenerate Hamiltonian on
$M$. Denote by $\CF^{(-\infty,\, b)}_m(H)$, with
$b\in (-\infty,\,\infty]\setminus\CS(H)$, the vector space of formal
finite linear combinations
$$ 
\sigma=\sum_{\bar{x}\in \bPP(H)} \sigma_{\bar{x}}\bar{x}, 
$$
where $\sigma_{\bar{x}}\in\F$ and $\mu(\bar{x})=m$ and
$\CA(\bx)<b$. (Since $M$ is strictly monotone, and thus $N<\infty$, we
do not need to take a completion here.) The graded $\F$-vector space
$\CF^{(-\infty,\, b)}_*(H)$ is endowed with the Floer differential
counting solutions $u\colon \R\times S^1\to M$ of the Floer equation,
$$
\p_s u+J\p_t u=-\nabla H_t(u),
$$
where $(s,t)$ are the coordinates on $\R\times S^1$, i.e., the
$L^2$-anti-gradient trajectories of the action functional. Thus we
obtain a filtration of the total Floer complex
$\CF_*(H):=\CF^{(-\infty,\, \infty)}_*(H)$. Furthermore, set
$I=(a,\,b)$ and
$$
\CF^{I}_*(H):=\CF^{(-\infty,\,
  b)}_*(H)/\CF^{(-\infty,\,a)}_*(H),
$$
where $-\infty\leq a<b\leq\infty$ are not in $\CS(H)$. This is simply
the complex generated by the capped periodic orbits of $H$ with action
in $I$ and equipped with the Floer differential.  The resulting
homology, the \emph{filtered Floer homology} of $H$, is denoted by
$\HF^{I}_*(H)$ and by $\HF_*(H)$ when $I=(-\infty,\,\infty)$.  Working
with filtered Floer homology, \emph{we will always assume that the end
  points of the action interval are not in the action spectrum.} The
degree of a class $\alpha\in \HF^{(a,\, b)}_*(H)$ is denoted
by~$|\alpha|$.

Also recall that the energy $E(u)$ of a solution $u$ of the Floer
equation is defined by
$$
E(u)=\int_{\R\times S^1}\|\p_s u\|^2\,ds\,dt=\int_{-\infty}^\infty
\|\p_s u(s)\|^2_{L^2}\,ds
$$
and that 
$$
E(u)=\CA_H(\bx)-\CA_H(\by)
$$
when $u$ is asymptotic to $\bx$ at $-\infty$ and to $\by$ at $\infty$.

The total Floer complex and homology are modules over the
\emph{Novikov ring} $\Lambda$. In this paper, the latter is defined as
follows. Set
$$
I_\omega(A)=-\omega(A)\text{ and } I_{c_1}(A)=-2\left<c_1(TM),
  A\right>,
$$
where $A\in\pi_2(M)$.  Thus $I_\omega=\tau I_{c_1}/2$ since $M$ is
strictly monotone.

Let $\Gamma$ be the quotient of $\pi_2(M)$ by the equivalence relation
where the two spheres $A$ and $A'$ are considered to be equivalent if
$I_\omega(A)=I_\omega(A')$ or, equivalently, since $M$ is strictly
monotone, $I_{c_1}(A)=I_{c_1}(A')$. Clearly, $\Gamma\simeq \Z$. The
homomorphisms $I_\omega$ and $I_{c_1}$ descend to $\Gamma$ and become
isomorphisms onto the image.

The group $\Gamma$ acts on $\CF_*(H)$ and $\HF_*(H)$ by recapping: an
element $A\in \Gamma$ acts on a capped one-periodic orbit $\bar{x}$ of
$H$ by attaching the sphere $A$ to the original capping. Denoting the
resulting capped orbit by $\bx\# A$, we have
\begin{equation}
\label{eq:recap}
\mu(\bx\# A)=\mu(\bx)+ I_{c_1}(A)
\end{equation}
when $x$ is non-degenerate. In a similar vein,
\begin{equation}
\label{eq:delta}
\CA_H(\bx\# A)=\CA_H(\bx)+I_\omega(A)
\text{ and } \hmu(\bx\# A)=\hmu(\bx)+ I_{c_1}(A)
\end{equation}
regardless of whether $x$ is non-degenerate or not.

In general, the Novikov ring $\Lambda$ is a certain completion of the
group ring of $\Gamma$ over $\F$ but for our purposes, since $M$ is
monotone, we can take as $\Lambda$ the group ring itself, i.e., the
collection of finite linear combinations $\sum \alpha_A e^A$, where
$\alpha_A\in\F$ and $A\in \Gamma$. The Novikov ring $\Lambda$ is
graded by setting $|e^A|=I_{c_1}(A)$.  The action of $\Gamma$ turns
$\CF_*(H)$ and $\HF_*(H)$ into $\Lambda$-modules. The maps $I_{c_1}$
and $I_\omega$ naturally extend to $\Lambda$. We set $\qq=e^{A_0}$,
where $A_0$ is the generator of $\Gamma$ with $I_{c_1}(A_0)=-2N$. Thus
$|\qq|=-2N$ and $I_\omega(\qq)=-\lambda$.

The construction of the filtered Floer homology extends to all, not
necessarily non-degenerate, Hamiltonians by continuity.  Namely, let
$H$ be an arbitrary (one-periodic in time) Hamiltonian on $M$ and let
the end-points $a$ and $b$ of the action interval $I$ be outside
$\CS(H)$. By definition, we set
$$
\HF^{I}_*(H)=\HF^{I}_*(\tH),
$$
where $\tH$ is a non-degenerate, small perturbation of $H$. The
right-hand side is independent of $\tH$ once $\tH$ is sufficiently
close to $H$.  The total Floer homology is independent of the
Hamiltonian and, up to a shift of the grading and the effect of
recapping, is isomorphic to the homology of $M$. More precisely, we
have
\begin{equation}
  \label{eq:H-M}
\HF_*(H)\cong \H_ {*+n}(M;\F)\otimes \Lambda
\end{equation}
as graded $\Lambda$-modules.

\begin{Example}[Projective Spaces]
  Let $H$ be a Hamiltonian on $\CP^n$. Then, by \eqref{eq:H-M},
  $\HF_m(H)=\F$ when $m$ has the same parity as $n$ and $\HF_m(H)=0$
  otherwise. To see this directly, one can, for instance, take a
  non-degenerate quadratic Hamiltonian as $H$ and show by a
  calculation that all fixed points of $\varphi_H$ are elliptic and
  hence their indices have the same parity as $n$. (In particular, the
  Floer differential vanishes.)  Recapping by the generator $A_0$ of
  $\Gamma\cong\Z$ decreases the index by $2N=2(n+1)$.
\end{Example}

\subsubsection{Local Floer homology}
\label{sec:LFH}
The notion of \emph{local Floer homology} goes back to the original
work of Floer and it has been revisited a number of times since
then. Here we only briefly recall the definition following mainly
\cite{Gi:CC,GG:gaps,GG:gap} where the reader can find a much more
thorough discussion and further references.

Let $x$ be an isolated one-periodic orbit of a Hamiltonian
$H\colon S^1\times M\to \R$. The local Floer homology $\HF(x)$ is the
homology of the Floer complex $\CF_*(\tH,x)$ generated by the orbits
$x_i$ which $x$ splits into under a $C^2$-small non-degenerate
perturbation $\tH$ of $H$. This homology is well defined, i.e.,
independent of the perturbation. The homology $\HF(x)$ is only
relatively graded and to fix an absolute grading one needs to pick a
trivialization of $TM$ along $x$. This can be done by using, for
instance, a capping of $x$ and in this case we write
$\HF_*(\bx)$. Note that then the orbits $x_i$ inherit a capping from
$\bx$.

For example, if $x$ is non-degenerate and $\mu(\bx)=m$, we have
$\HF_l(\bx)=\F$ when $l=m$ and $\HF_l(\bx)=0$ otherwise. This
construction is local: it requires $H$ to be defined only on a
neighborhood of $x$.

By definition, the \emph{support} of $\HF_*(\bx)$, denoted by
$\supp\HF_*(\bx)$, is the collection of integers $m$ such that
$\HF_m(\bx)\neq 0$. By \eqref{eq:mean-cz} and continuity of the mean
index,
\begin{equation}
\label{eq:supp}
\supp \HF_*(\bx)\subset [\hmu(\bx)-n,\, \hmu(\bx)+n].
\end{equation}
Moreover, when $x$ is weakly non-degenerate, the closed interval can
be replaced by the open interval.

The local Floer homology groups are building blocks for the filtered
Floer homology. For instance, assume that $c$ is the only point of
$\CS(H)$ in an interval $I$ and that all one-periodic orbits of $H$
with action $c$ are isolated. Then, as is easy to see,
\begin{equation}
\label{eq:split}
\HF_*^I(H)=\bigoplus_{\CA_H(\bx)=c}\HF_*(\bx).
\end{equation}
To prove this and also with Lemma \ref{lemma:spec} in mind, first
observe that, when the orbits are isolated, a finite energy solution
of the Floer equation $u$ is automatically asymptotic to unique
one-periodic orbits as $s\to\pm\infty$ and there is an \emph{a priori}
lower bound $\eps$ on the energy of $u$; see, e.g., \cite[Sect.\
1.5]{Sa}. (Hence, we have, in this case, the notion of a solution
connecting two orbits.)  Then one can shrink $I$ to an interval with
length below $\eps$ and notice that for a sufficiently small
non-degenerate perturbation $\tH$ the Floer complex splits; see
\cite[Sect.\ 2.5]{GG:gaps}.

We have the following simple result sharpening \eqref{eq:split}, which
we state and prove in a slightly more general form than needed
here. Denote by $d(\cdot,\,\cdot)$ the distance between two subsets of
$\R$, i.e.,
$$ 
d(A,\,B)=\inf\{|a-b|\mid a\in A,\, b\in B\}.
$$

\begin{Lemma}
\label{lemma:spec}
Assume that all capped one-periodic orbits $\bx$ of $H$ with action in
$I$ are isolated. Then there exists a spectral sequence with
\begin{equation}
\label{eq:E1}
E^1=\bigoplus_{\bx}\HF_*(\bx)
\end{equation}
converging to $\HF_*^I(H)$. Furthermore, $\HF_*(\bx)$ is a direct
summand in $\HF_*^I(H)$ when one of the following two conditions is
met:
\begin{itemize}
\item[(i)] $\bx$ is not connected to any other capped periodic orbit
  of $H$ with action in $I$ by a solution of the Floer equation;
\item[(ii)] for any $\by$ with $\CA_H(\by)\in I$, we have
$$
d\big(\supp\HF_*(\bx), \supp\HF_*(\by)\big)>1.
$$
\end{itemize}
Moreover, assume that (i) holds or the following condition is
satisfied:
\begin{itemize}
\item[(ii')] for any $\by$ with $\CA_H(\by)\in I$, we have
$$
\big|\hmu(\bx)-\hmu(\by)\big|> 2n+1.
$$
\end{itemize}
Then, when $\tH$ is sufficiently $C^2$-close to $H$, the complex
$\CF_*(\tH,\bx)$ is a direct summand in $\CF_*^I(\tH)$.
\end{Lemma}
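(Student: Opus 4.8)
The plan is to deduce Lemma \ref{lemma:spec} from the already-established splitting \eqref{eq:split} together with a close inspection of how the filtered Floer complex of a small non-degenerate perturbation $\tH$ assembles from the local pieces $\CF_*(\tH,\bx)$. First I would fix a non-degenerate $\tH$ that is $C^2$-close to $H$, so close that each isolated capped orbit $\bx$ of $H$ with $\CA_H(\bx)\in I$ splits into a cluster of orbits $\bx_i$ whose actions lie in a tiny interval around $\CA_H(\bx)$ and whose Conley--Zehnder indices lie in $[\hmu(\bx)-n,\,\hmu(\bx)+n]$ (using \eqref{eq:mean-cz} and \eqref{eq:supp} and continuity of the mean index). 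As a vector space, $\CF^I_*(\tH)=\bigoplus_{\bx}\CF_*(\tH,\bx)$; the issue is entirely about the Floer differential, which need not respect this decomposition. Order the $\bx$'s by action, $c_1<c_2<\cdots$; since the Floer differential decreases action, it is block-upper-triangular with respect to this ordering, the diagonal blocks being the local differentials. This immediately gives a filtration of $\CF^I_*(\tH)$ whose associated graded complex is $\bigoplus_\bx \CF_*(\tH,\bx)$, and the resulting spectral sequence has $E^1$-page $\bigoplus_\bx \HF_*(\bx)$ and converges to $\HF^I_*(H)=\HF^I_*(\tH)$. That establishes \eqref{eq:E1}.

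Next I would prove the ``direct summand'' assertions. For (i): if $\bx$ is not connected by any Floer trajectory to another capped orbit with action in $I$, then for $\tH$ sufficiently close to $H$ there are no Floer trajectories of $\tH$ between the cluster $\{\bx_i\}$ and any $\bx'_j$ with $\bx'\neq\bx$ — this is the standard compactness/Gromov-limit argument: a sequence of such trajectories for $\tH_\nu\to H$ would Gromov-converge (after breaking) to a broken trajectory of $H$ connecting $\bx$ to $\bx'$, with each piece carrying energy at least the \emph{a priori} bound $\eps$ from \cite[Sect.\ 1.5]{Sa}, contradicting the hypothesis. Hence $\CF_*(\tH,\bx)$ is a genuine subcomplex \emph{and} a quotient complex, i.e.\ a direct summand of $\CF^I_*(\tH)$; passing to homology gives that $\HF_*(\bx)$ is a direct summand of $\HF^I_*(H)$. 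This simultaneously handles the final claim of the lemma under hypothesis (i).

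For the index-gap conditions I would argue at the level of gradings. Condition (ii') says $|\hmu(\bx)-\hmu(\by)|>2n+1$ for every other $\by$ with action in $I$; by \eqref{eq:supp} applied to the clusters, the Conley--Zehnder indices of the orbits $\bx_i$ all lie in $[\hmu(\bx)-n,\,\hmu(\bx)+n]$ and those of the $\by_j$ in $[\hmu(\by)-n,\,\hmu(\by)+n]$, and the Floer differential changes degree by exactly one, so any component of the differential of $\tH$ from the $\bx$-cluster to the $\by$-cluster would require two integers at distance $1$ coming from intervals whose centers are more than $2n+1$ apart — impossible. Thus $\CF_*(\tH,\bx)$ splits off as a direct summand of the complex, proving the last assertion under (ii'). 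The weaker homological statement under (ii) follows the same way but more carefully: the hypothesis $d(\supp\HF_*(\bx),\supp\HF_*(\by))>1$ need not survive perturbation at the chain level, but it does control the pages of the spectral sequence from (i)'s construction — on every page the differential shifts degree by $1$ (more precisely, the $E^r$-differential lands in total degree one less), so it cannot connect $\HF_*(\bx)$ to any $\HF_*(\by)$, and $\HF_*(\bx)$ survives to $E^\infty$ as a direct summand of $\HF^I_*(H)$.

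The main obstacle I anticipate is the careful bookkeeping distinguishing the three conclusions: a chain-level direct summand (claimed only under (i) or (ii')) versus a homology-level direct summand (claimed also under (ii)), and making precise why (ii) — a statement about the \emph{supports} of the local homologies — is genuinely weaker and only yields the homological conclusion. The subtlety is that under (ii) the degree gap might be witnessed only after taking homology, so one must run the spectral sequence of the action filtration and check degree-counting on \emph{every} page rather than just on the chain complex; conversely (ii') forces the gap already on the chain generators via \eqref{eq:supp}. Once this distinction is organized cleanly, the rest is the routine compactness estimate already invoked for \eqref{eq:split} and the elementary observation that a two-step filtration with vanishing connecting differential splits.
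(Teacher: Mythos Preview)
Your proposal is correct and follows essentially the same approach as the paper: the action-filtration spectral sequence gives the $E^1$-page, case (ii') is handled by the index window \eqref{eq:supp} at the chain level, case (ii) by degree-counting on the spectral sequence pages, and case (i) by a compactness argument passing to the limit $\tH\to H$. The only notable difference is that for (i) the paper invokes Fish's target-local compactness theorem (normalizing so that $u(0,0)\in\partial U$ and extracting a limiting $H$-trajectory that exits $U$), whereas you appeal to ``standard'' Gromov--Floer compactness; this is fine here because the orbits of $H$ are isolated, but you should be aware that the paper's formulation is what makes the limit argument rigorous when $H$ is degenerate.
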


Note that while requirement (ii') is more restrictive than (ii), the
third assertion of the lemma, that $\CF_*(\tH,\bx)$ enters
$\CF_*^I(\tH)$ as a direct summand, is stronger than the second
concerning a similar fact on the level of homology.  The proof of the
lemma is routine and below we merely comment on the argument.

\begin{proof}[Outline of the proof]
  The required spectral sequence is associated with the action
  filtration on the Floer complex of a small non-degenerate
  perturbation of $H$ and \eqref{eq:E1} is essentially a rephrasing of
  \eqref{eq:split}. (It is a variant of the Morse--Bott spectral
  sequence.) To prove the second assertion, it is enough to show that
  the parts of the differentials $d_j$, $j\geq 2$, connecting
  $\HF_*(\bx)$ with the rest of the spectral sequence vanish when (i)
  or (ii) holds. When (i) is satisfied, this is a consequence of the
  third assertion proved below. When (ii) holds, the vanishing of the
  differentials follows from the fact that $|\mu(\bx_i)-\mu(\by_j)|>1$
  for all capped orbits $\bx_i$ and $\by_j$ which $\bx$ and,
  respectively, $\by$ split into. This argument also shows that
  $\CF_*(\tH,\bx)$ is a direct summand when (ii') is satisfied.

  Finally, let us prove that $\CF_*(\tH,\bx)$ is a direct summand in
  $\CF_*^I(\tH)$ when (i) holds. Arguing by contradiction assume that
  there is a sequence of Floer trajectories $u$ connecting $\bx_i$ and
  $\by_j$ for some sequence of perturbations $\tH\to H$. (Say, $u$ is
  asymptotic to $\bx_i$ at $-\infty$ and $\by_j$ at $\infty$.) Fix a
  closed neighborhood $U$ of $x$ which contains no other one-periodic
  orbits of $H$. Without loss of generality we may assume that
  $u(0,0)\in\p U$ and that the half-cylinder
  $(-\infty,\, 0]\times S^1$ is mapped into $U$. Applying the
  target-local compactness theorem from \cite{Fi} to the restrictions
  of $u$ to a finite cylinder $[-L,\,L]\times S^1$ and then the
  diagonal process as $L\to\infty$, we obtain a solution $v$ of the
  Floer equation for $H$ mapping the half-cylinder
  $(-\infty,\, 0]\times S^1$ into $U$. Hence, $v$ is asymptotic to
  $\bx$ at $-\infty$. It is easy to see that the capped orbit which
  $v$ is asymptotic to at $\infty$ has action in $I$. This contradicts
  (i).
\end{proof}

\subsection{Cap product}
\label{sec:cap-product}
The algebraic structure on the Floer homology, which is crucial for
what follows, is that of a module over the (small) \emph{quantum
  homology} of $M$. The quantum homology $\HQ_*(M)$ of $M$ is an
algebra over the Novikov ring $\Lambda$ defined above and
$\HQ_*(M)=\H_*(M)[-n]\otimes \Lambda$ as $\Lambda$-modules. We refer
the reader to, e.g., \cite[Chap.\ 11]{MS} for the definition of the
product $*$ in $\HQ_*(M)$.  The fundamental class $[M]$ is the unit
with respect to this product. The maps $I_{c_1}$ and $I_\omega$
naturally extend to $\HQ_*(M)$. For instance,
$$
I_\omega(\alpha) =\max\,\{I_\omega(A)\mid \alpha_A\neq 0\}
=\max\,\{-\lambda_0 k \mid \alpha_k\neq 0\},
$$
where $\alpha=\sum \alpha_A e^A=\sum \alpha_k \qq^k\in \HQ_*(M)$.

\begin{Example} 
\label{ex:cpn}
We will make an extensive use of the product structure
on $\HQ_*(\CP^n)$. In this case $N=n+1$ and $\HQ_*(\CP^n)$ is generated by
the hyperplane class $[\CP^{n-1}]$. To be more precise, we have
$[\CP^{n-1}]^{\ell}=[\CP^{n-\ell}]$ when $\ell\leq n$ and
\begin{equation}
\label{eq:qp-cpn}
[\CP^{n-1}]^{n+1}=\qq [\CP^n],
\end{equation}
where $|\qq|=-2(n+1)$; see, e.g., \cite[Sect.\ 11.3]{MS} and
references therein. In other words,
$[\CP^{n-1}] * \alpha_\ell=\alpha_{\ell-1}$, where $\alpha_\ell$ is a generator
of $\HQ_{2\ell-n}(\CP^n)$. For instance, $[\pt]*[\CP^{n-1}]=\qq
[\CP^n]$, which reflects the fact that there is exactly one
line passing through any two distinct points of $\CP^n$ and when 
a point $\pt$ is fixed every line through $\pt$ intersects
$\CP^{n-1}\not\ni\pt$ exactly once. 
\end{Example}

Identifying the global Floer homology with $\HQ_*(M)$, we can view
$\HF_*(H)$ as an $\HQ_*(M)$-module. It is important for our purposes
that this module structure extends in a certain way to the filtered
Floer homology, and we refer to the resulting ``$\HQ_*(M)$-action'',
\eqref{eq:cap-action}, as the \emph{cap product}. (Strictly speaking,
the cap product is not a true algebra action due to a filtration
shift; rather it should be thought of as an ``action'' of $\HQ_*(M)$
on the entire collection of the filtered Floer homology groups.)  The
definition of the cap product, recalled below, goes back to
\cite{Vi:product} and \cite{LO}. Here we closely follow \cite[Sect.\
2.3]{GG:hyperbolic}; see also \cite[Rmk.\ 12.3.3]{MS}.

On the level of cycles, the action of a pseudo-cycle $\zeta$ with
$[\zeta]\in \H_\ell(M)$ on $\bx$ is given by counting the solutions
$u$ of the Floer equation with $u(0,0)\in\zeta$. More precisely, pick
a generic almost complex structure and a generic $\zeta$ and set
$$
\Phi_{\zeta}(\bx):=\sum_{\by} m(\bx,\by;\zeta)\by.
$$
Here $m(\bx,\by;\zeta)$ is the number of the elements, taken with
appropriate signs, in the moduli space $\CM(\bx,\by;\zeta)$ of
solutions $u$ asymptotic to $\bx$ at $-\infty$ and
$\by$ at $\infty$ and such that $u(0,0)\in \zeta$. This moduli space
has dimension $|\bx|-|\by|-\codim(\zeta)$ and, by definition,
$m(\bx,\by;\zeta)=0$ when $\dim \CM(\bx,\by;\zeta)>0$.

The map $\Phi_\zeta$ commutes with the Floer differential and gives
rise to a well-defined map
$$
\Phi_{[\zeta]} \colon \HF^{(a,\,b)}_*(H)\to
\HF^{(a,\,b)}_{*-\codim(\zeta)}(H).
$$
The analytical details of this construction and complete proofs can be
found in, e.g., \cite{LO}, in much greater generality than is needed
here. Clearly,
$$
\Phi_{[M]}=\id.
$$

The action of the class $\alpha=\qq^\ell[\zeta]\in \HQ_*(M)$ is
induced by the map
$$
\Phi_{\qq^\ell\zeta}(\bx):=\sum_{\by} m(\qq^\ell\bx,\by;\zeta)\by.
$$
Here, as in Section \ref{sec:FFH}, $\qq=e^{A_0}$ where $A_0$ is the
generator of $\Gamma$ with $I_{c_1}(A_0)=-2N$. It is routine to check
that $\Phi_{\qq^\ell[\zeta]}=\qq^\ell\Phi_{[\zeta]}$. (This is a
consequence of the fact that
$ \CM(\qq^\ell\bx,\by;\zeta)=\CM(\bx,\qq^{-\ell}\by;\zeta)$.)  The
resulting map shifts the action interval by $I_\omega(\alpha)$, i.e.,
\begin{equation}
\label{eq:cap-action}
\Phi_\alpha \colon \HF^{(a,\,b)}_*(H)\to 
\HF^{(a,\,b)+I_\omega(\alpha)}_{*+|\alpha|-2n}(H),
\end{equation}
where $(a,\,b)+c$ stands for $(a+c,\,b+c)$.

By linearity over $\Lambda$, we extend $\Phi_\alpha$ to all
$\alpha\in \HQ_*(M)$ so that \eqref{eq:cap-action} still holds. The
maps $\Phi_\alpha$ are linear in $\alpha$ once the shift of the action
filtration is taken into account; see \cite[Sect.\
2.3]{GG:hyperbolic}.  These maps fit together to form an action of the
quantum homology on the collection of the filtered Floer homology
groups. The action is multiplicative. In other words, we have
\begin{equation}
\label{eq:qh-fh-action}
\Phi_\alpha\Phi_\beta=\Phi_{\alpha*\beta},
\end{equation}
which can be thought of as a form of associativity of the quantum
product.  Strictly speaking, in \eqref{eq:qh-fh-action}, as in the
case of additivity, the maps on the two sides of the identity have
different target spaces, which can be accounted for by considering the
shifts of the action filtration. We refer the reader to \cite[Sect.\
2.3]{GG:hyperbolic} for the precise statement. The identity
\eqref{eq:qh-fh-action} was essentially established in \cite{LO} and
\cite{PSS}; see also \cite[Rmk.\ 12.3.3]{MS}.

The cap product action also extends to the local Floer
homology. Namely, let $\bx$ be an isolated one-periodic orbit of $H$ and
let $\tH$ be a $C^2$-small non-degenerate perturbation of
$H$. Applying the above construction word-for-word to the complex
$\CF_*(\tH,x)$ from Section \ref{sec:LFH}, we obtain an action (i.e.,
a module structure) of $\HQ_*(M)$ on $\HF_*(\bx)$. In other words, for
every $\alpha\in \HQ_*(M)$ we have a map
$\Phi_\alpha\colon \HF_*(\bx)\to\HF_{*+|\alpha|-2n}(\bx)$ and
\eqref{eq:qh-fh-action} is satisfied. However, the resulting action is
trivial unless of course $\alpha$ is a multiple of $[M]$:

\begin{Lemma}
\label{lemma:trivial}
Assume that $|\alpha|<2n$. Then $\Phi_\alpha=0$ on $\HF_*(\bx)$.
\end{Lemma}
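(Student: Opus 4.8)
The plan is to reduce the statement to a single quantum monomial and then use the fact that, on local Floer homology, the cap product counts only Floer cylinders confined to an arbitrarily thin tube around the orbit. First I would invoke the $\Lambda$-linearity of the cap product (recalled after \eqref{eq:cap-action}) together with Example \ref{ex:cpn}: $\HQ_*(\CP^n)$ is generated over $\Lambda$ by the classes $[\CP^{n-j}]=[\CP^{n-1}]^{j}$, $0\le j\le n$, with $|[\CP^{n-j}]|=2(n-j)$, while $[\CP^{n-1}]^{n+1}=\qq[\CP^n]$. A short degree count then shows that a homogeneous $\alpha$ with $|\alpha|<2n=|[\CP^n]|$ is a $\Lambda$-combination of monomials $\qq^{k}[\CP^{n-j}]$ with $k\ge 0$, and with $j\ge 1$ whenever $k=0$. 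For such a monomial $\Phi_{\qq^{k}[\CP^{n-j}]}$ shifts degree by $|\qq^{k}[\CP^{n-j}]|-2n=-2j-2Nk$; when $k\ge 1$ this is smaller than $-2N<-2n$, so, since $\supp\HF_*(\bx)\subset[\hmu(\bx)-n,\,\hmu(\bx)+n]$ has width $2n$ by \eqref{eq:supp}, the operator automatically vanishes. (The same remark disposes of $\alpha$ with $|\alpha|<0$.) Thus everything comes down to showing $\Phi_{\zeta}=0$ on $\HF_*(\bx)$ for a cycle $\zeta$ of positive real codimension.

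For this I would work with a $C^2$-small non-degenerate perturbation $\tH$ of $H$, so that $\HF_*(\bx)$ is the homology of $\CF_*(\tH,x)$ and $\Phi_\zeta$ counts solutions $u$ of the Floer equation, connecting orbits that $\bx$ splits into, with $u(0,0)\in\zeta$. The key point is that by construction all such $u$ have image in an a priori fixed, arbitrarily thin neighborhood $U$ of the loop $x(S^1)$ -- this confinement follows from the energy identity $E(u)=\CA(\bx_i)-\CA(\by_j)$, which is small once $\tH$ is $C^2$-close to $H$ relative to $U$, together with Gromov compactness. Since $x(S^1)$ is one-dimensional, a generic geometric representative of $[\zeta]$ of dimension $\le 2n-2$ is disjoint from $x(S^1)$ by transversality, hence disjoint from $U$ once $U$ is thin enough; then no $u$ contributing to $\Phi_\zeta$ can satisfy $u(0,0)\in U\cap\zeta=\emptyset$, so every coefficient vanishes and $\Phi_\zeta=0$ already on the chain level. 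For $\CP^n$ this covers all the relevant classes, as $\H_{\mathrm{odd}}(\CP^n)=0$ forces $\dim\zeta$ to be even, hence $\le 2n-2$.

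The one step I would handle with care -- and the only genuinely analytic input -- is the confinement of the relevant Floer solutions to an a priori fixed thin tube around $x(S^1)$, uniformly in the perturbation $\tH$ and in the auxiliary almost complex structure; this is standard in the theory of local Floer homology, but it is precisely what legitimizes the ``push $\zeta$ off $U$'' argument, so it is the step I expect to be the main obstacle to write down cleanly. For a general monotone $M$ rather than $\CP^n$ one would in addition have to treat codimension-one cycles: there the argument uses that $x(S^1)$ is null-homologous (it bounds any capping of $\bx$), so that its algebraic intersection number with $\zeta$ vanishes and $\zeta$ can be homotoped off a tube around $x(S^1)$ within its homology class -- and one would also need $N>n$ so that the $\qq$-powers remain killed by the degree count of the first paragraph. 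Everything else is routine bookkeeping.
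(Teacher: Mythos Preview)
Your argument is correct and shares the same core idea with the paper's proof --- represent $\alpha$ by a cycle disjoint from a small neighborhood of the orbit, then use energy/compactness to confine the relevant Floer cylinders to that neighborhood --- but the paper executes it more economically. Instead of a tube around the loop $x(S^1)$, the paper takes a small ball $B$ around the \emph{fixed point} $x(0)$; any class with $|\alpha|<2n$ is then represented by a cycle avoiding a single point, which disposes in one stroke of both the codimension-one case and your entire first paragraph (the $\Lambda$-decomposition and the degree-shift kill of the $\qq^k$ pieces are unnecessary, and the argument is not specific to $\CP^n$). Correspondingly, the paper does not need the full image of $u$ to lie in a tube: it only needs the time-zero slice $s\mapsto u(s,0)$ to stay in $B$, which it proves by the same small-energy contradiction you sketch (if $u_l(0,0)\in\partial B$ with $E(u_l)\to 0$, the loop $u_l(0,\cdot)$ converges to an integral curve of $\varphi_H^t$, so $\lim u_l(0,0)$ is a fixed point in $\bar B\setminus\{x(0)\}$, a contradiction). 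Your version buys nothing extra here, though your explicit handling of the $\qq$-components is arguably more honest about what ``$|\alpha|<2n$'' means in $\HQ_*(M)$; the paper tacitly treats $\alpha$ as an ordinary homology class, which is all that is used downstream.
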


The proof of the lemma is standard and we just outline the argument.

\begin{proof}
  We argue as in the proof of the fact that $\CF_*(\tH,\bx)$ is a
  subcomplex in $\CF_*(\tH)$; cf.\ \cite{Gi:CC, GG:gaps}.  Let $B$ be
  a small ball centered at the fixed point $x(0)$ such that its
  closure $\bar{B}$ contains no other fixed points, and let $\tH$ be a
  $C^2$-small, non-degenerate perturbation of $H$. Denote by $\bx_i$
  the capped one-periodic orbits which $\bx$ splits into. Clearly,
  $x_i(0)\in B$.  Every class $\alpha$ with $|\alpha|<2n$ can be
  represented by a cycle avoiding $B$. Thus it is enough to show that
  for every solution $u$ of the Floer equation for $\tH$ asymptotic to
  some orbits $\bx_i$ and $\bx_j$, we have $u(s,0)\in B$ for all
  $s\in \R$ when $\tH$ is close to $H$. Arguing by contradiction
  assume that there is a sequence of $C^2$-small, non-degenerate
  perturbations $\tH_l\to H$ and solutions $u_l$ of the Floer equation
  for $\tH_l$ such that $u_l(s_l,0)\in \p B=\bar{B}\setminus
  B$. Without loss of generality we can assume that $s_l=0$ by
  applying a shift in $s$. Clearly, $E(u_l)\to 0$ and, as a
  consequence, the loops $t\mapsto u_l(0,t)$ converge to an integral
  curve of $\varphi_H^t$; see, e.g., \cite[Sect.\ 1.5]{Sa}. Passing to
  a subsequence, we conclude that $p=\lim u_l(0,0)$ is a fixed point
  of $\varphi_H$ which contradicts our choice of $B$.
\end{proof}

\begin{Remark}
  Lemma \ref{lemma:trivial} has a counterpart in the context of the
  algebra structure and the pair-of-pants product in Floer
  homology. This is the fact established in \cite{Ci} that, unless $x$
  is a so-called SDM, the local Floer homology algebra
  $\bigoplus_{k>0}\HF_*(\bx^k)$ is non-uniformly nilpotent.
\end{Remark}

\subsection{Spectral invariants and action carriers}
\label{sec:spec}
In this section, we briefly discuss spectral invariants and action
carriers to the extent necessary for our purposes. The theory of
Hamiltonian \emph{spectral invariants} was developed in its present
Floer theoretical form in \cite{Oh:constr, Sc}, although the first
versions of the theory go back to \cite{HZ, Vi:gen}. Action carriers
were introduced in \cite{GG:gaps} and then studied in \cite{CGG,
  GG:nm}.

Let $H$ be a Hamiltonian on a closed monotone (or even rational)
symplectic manifold $M^{2n}$. The \emph{spectral invariant} or
\emph{action selector} $\s_\alpha$ associated with a class
$\alpha\in \HF_*(H)=\HQ_*(M)$ is defined as
$$
\s_\alpha(H)= \inf\{ a\in \R\setminus \CS(H)\mid \alpha \in \im(i^a)\}
=\inf\{ a\in \R\setminus \CS(H)\mid j^a\left( \alpha \right)=0\},
$$
where $i^a\colon \HF_*^{(-\infty,\,a)}(H)\to \HF_*(H)$ and
$j^a\colon \HF_*(H)\to\HF_*^{(a,\, \infty)}(H)$ are the natural
``inclusion'' and ``quotient'' maps. It is easy to see that
$\s_\alpha(H)>-\infty$ when $\alpha\neq 0$ and one can show that
$\s_\alpha(H)\in S(H)$. In other words, there exists a capped
one-periodic orbit $\bx$ of $H$ such that $\s_\alpha(H)=\CA_H(\bx)$. As an
immediate consequence of the definition,
$$
\s_\alpha(H+a(t))=\s_\alpha(H)+\int_{S^1}a(t)\,dt,
$$
where $a\colon S^1\to\R$.

Action selectors have several important properties.  The function
$\s_\alpha$ is homotopy invariant: $\s_\alpha(H)=\s_\alpha(K)$ when
$\varphi_H=\varphi_K$ in $\tHam(M)$ and $H$ and $K$ have the same mean
value. Furthermore, it is sub-additive:
$$
\s_{\alpha * \beta}(H \nat K)\leq\s_{\alpha}(H)+\s_{\beta}(K).
$$
In particular,
$$
\s_{[M]}(H \nat K)\leq\s_{[M]}(H)+\s_{[M]}(K).
$$
Finally, $\s_\alpha$ is monotone and Lipschitz in the $C^0$-topology
as a function of $H$.

When $H$ is non-degenerate, the action selector can also be evaluated
as
$$
\s_\alpha(H)=\inf_{[\sigma]=\alpha}\s_\sigma(H),
$$
where we set 
\begin{equation}
\label{eq:cycle-action}
\s_\sigma(H)=\max\big\{\CA_H(\bx)\,\big|\, \sigma_{\bx} 
\neq 0\big\}\text{ for }
\sigma=\sum\sigma_{\bx} \bx\in\CF_*(H).
\end{equation}
The infimum here is attained, since $M$ is rational and thus $\CS(H)$
is closed.  Hence there exists a cycle
$\sigma=\sum\sigma_{\bx} \bx\in\CF_{|\alpha|}(H)$, representing the
class $\alpha$, such that $\s_\alpha(H)=\CA_H(\bx)$ for an orbit $\bx$
entering $\sigma$. In other words, $\bx$ maximizes the action on
$\sigma$ and the cycle $\sigma$ minimizes the action over all cycles
in the homology class $\alpha$. Such an orbit $\bx$ is called a
\emph{carrier} of the action selector. This is a stronger requirement
than just that $\s_\alpha(H)=\CA_H(\bx)$ and $\mu(\bx)=|\alpha|$. When
$H$ is possibly degenerate, a capped one-periodic orbit $\bx$ of $H$
is a carrier of the action selector if there exists a sequence of
$C^2$-small, non-degenerate perturbations $\tH_i\to H$ such that one
of the capped orbits $\bx$ splits into is a carrier for $\tH_i$. An
orbit (without capping) is said to be a carrier if it turns into one
for a suitable choice of capping.

It is easy to see that a carrier necessarily exists, but, in
general, is not unique. However, it becomes unique when all one-periodic
orbits of $H$ have distinct action values.

As consequence of the definition of the carrier and continuity of the
action and the mean index, we have
$$
\s_\alpha(H)
=\CA_H(\bx)\text{ and } \big|\hmu(\bx)-|\alpha|\big| \leq n,
$$
and the inequality is strict when $x$ is weakly non-degenerate.
Furthermore, a carrier $\bx$ for $\s_\alpha$ is in some sense
homologically essential as the following result asserts.

\begin{Lemma}
\label{lemma:ac}
Let $\bx$ be an action carrier for $\s_\alpha$. Then
$\HF_{|\alpha|}(\bx)\neq 0$ when $x$ is isolated.
\end{Lemma}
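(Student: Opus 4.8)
The plan is to work with a non-degenerate perturbation and run the cap-product action ``backwards'' from the fundamental class down to $\alpha$, tracking action filtrations. Let $\bx$ be an action carrier for $\s_\alpha(H)$. After passing to a $C^2$-small non-degenerate perturbation $\tH_i\to H$, one of the orbits $\bx$ splits into — call it again $\bx$ by abuse — is a carrier for $\tH_i$, so there is a cycle $\sigma=\sum\sigma_{\by}\by\in\CF_{|\alpha|}(\tH_i)$ representing $\alpha$ on which $\bx$ uniquely maximizes the action, with $\CA_{\tH_i}(\bx)=\s_\alpha(\tH_i)\to\s_\alpha(H)$. Since the local complex $\CF_*(\tH_i,\bx)$ is a genuine subcomplex of $\CF_*(\tH_i)$ (the argument recalled in the proof of Lemma~\ref{lemma:trivial}), the orbit $\bx$ survives in homology precisely as a nonzero class in $\HF_*(\tH_i,\bx)$ in degree $|\alpha|$; passing to the limit $i\to\infty$, this will give $\HF_{|\alpha|}(\bx)\neq 0$, provided we can show $\bx$ is not a boundary in the local complex. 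So the real content is: \emph{the carrier $\bx$ represents a nonzero local homology class in degree $|\alpha|$.}

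The key step is to exploit the fact that $\bx$ is a carrier, i.e.\ it is the action-maximal generator of a \emph{global} cycle $\sigma$ representing $\alpha\neq 0$. Suppose for contradiction that, in the local complex $\CF_*(\tH_i,\bx)$, the chain formed by the $\bx$-supported part of $\sigma$ were exact, say $\partial_{\mathrm{loc}}\eta = (\text{that part})$ for some local chain $\eta$ supported near $x(0)$ with $\CA_{\tH_i}(\eta)<\CA_{\tH_i}(\bx)$ strictly (every generator of $\eta$ has strictly smaller action since $\bx$ is the unique maximizer and the Floer differential strictly decreases action). Then $\sigma - \partial_{\mathrm{glob}}\eta$ is a homologous cycle representing $\alpha$ all of whose generators have action strictly below $\CA_{\tH_i}(\bx)=\s_\alpha(\tH_i)$. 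But by \eqref{eq:cycle-action} and the defining property of the action selector, no cycle representing $\alpha\neq 0$ can have all generators with action strictly below $\s_\alpha(\tH_i)$ — that would force $\alpha\in\im(i^a)$ for some $a<\s_\alpha(\tH_i)$, contradicting the definition of $\s_\alpha$. (Here I use that $\partial_{\mathrm{loc}}$ agrees with the $\bx$-block of $\partial_{\mathrm{glob}}$ because $\CF_*(\tH_i,\bx)$ is a subcomplex, so extending $\eta$ by zero is a legitimate global chain.) Hence the $\bx$-block of $\sigma$ is a nonzero cycle in $\CF_*(\tH_i,\bx)$ that is not a boundary, i.e.\ $\HF_{|\alpha|}(\tH_i,\bx)\neq 0$, in degree $|\alpha|$ (the degree of $\sigma$).

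Finally, one must check that nonvanishing of $\HF_{|\alpha|}(\tH_i,\bx)$ for a sequence $\tH_i\to H$ yields $\HF_{|\alpha|}(\bx)\neq 0$ for the limiting orbit. This is immediate from the definition of local Floer homology: for $\tH_i$ sufficiently $C^2$-close to $H$, the complex $\CF_*(\tH_i,\bx)$ \emph{is} (by definition) a complex computing $\HF_*(\bx)$, with the grading fixed by the capping; so the groups are literally equal for large $i$, and the degree is stable. I expect the main obstacle — or at least the point needing the most care — to be the interplay of the two filtrations in the exactness argument: one has to be sure that ``kill the $\bx$-block locally'' really does produce a \emph{global} homologous cycle with strictly smaller action, which relies on $\CF_*(\tH_i,\bx)$ being a subcomplex (not just a subquotient) and on the strict action drop of the Floer differential together with uniqueness of $\bx$ as the action maximizer on $\sigma$. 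Everything else is bookkeeping with \eqref{eq:cycle-action} and the definition of $\s_\alpha$.
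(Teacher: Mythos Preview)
Your overall strategy is right and close to the argument in \cite{GG:nm}: exhibit a local cycle that cannot be a local boundary because killing it would produce a global representative of $\alpha$ with strictly smaller maximal action. But the argument as written has a genuine gap at the step where you invoke the ``subcomplex'' property.

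First, the statement that $\CF_*(\tH_i,\bx)$ is a subcomplex of the full complex $\CF_*(\tH_i)$ is not what the proof of Lemma~\ref{lemma:trivial} shows. That proof establishes only that Floer trajectories between two \emph{local} orbits stay in the ball $B$; it does not rule out trajectories from a local orbit to a non-local one of lower action, and such trajectories certainly can exist. So the local complex is in general neither a subcomplex nor a quotient of the \emph{unfiltered} global complex. Second, even granting the subcomplex property, your claim that the local part $\sigma_L$ of $\sigma$ is a local cycle does not follow: writing $\sigma=\sigma_L+\sigma_O$ and using $\partial(\text{local})\subset\text{local}$ gives only $\partial\sigma_O=-\partial\sigma_L\in\text{local}$, not $\partial\sigma_L=0$. (A toy example: $\sigma=\bx_1+\by$ with $\bx_1$ local, $\by$ non-local, $\partial\bx_1=\bx_0$, $\partial\by=-\bx_0$; here $\sigma_L=\bx_1$ is not closed.) Also, your side remark that the generators of $\eta$ have action strictly below $\CA(\bx_1)$ is backwards: $\eta$ lives in degree $|\alpha|+1$, is not part of $\sigma$, and has \emph{higher} action than $\partial_{\mathrm{loc}}\eta$.

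The fix is to work inside a short action window rather than the full complex. Set $c=\CA_H(\bx)$ and choose $\delta>0$ small. For $\tH_i$ sufficiently close to $H$, the local complex $\CF_*(\tH_i,\bx)$ is a \emph{direct summand} of $\CF^{(c-\delta,\,c+\delta)}_*(\tH_i)$ as a chain complex; this is exactly the splitting \eqref{eq:split} and the compactness argument behind it (a low-energy trajectory leaving a neighborhood of $x$ would limit to a one-periodic orbit of $H$ on $\partial B$, contradicting that $x$ is isolated). Now let $\bar\sigma$ be the image of $\sigma$ in $\CF^{(c-\delta,\,c+\delta)}_*(\tH_i)$; it is automatically a cycle, and its local component $\bar\sigma_L$ \emph{is} a local cycle by the direct-sum property. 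If $\bar\sigma_L=\partial_{\mathrm{loc}}\eta$, then $\partial_{\mathrm{glob}}\eta$ agrees with $\bar\sigma_L$ inside the window and has action below $c-\delta$ elsewhere, so $\sigma-\partial_{\mathrm{glob}}\eta$ represents $\alpha$ and has no generators in the $\bx$-cluster; since the carrier $\bx_1$ was the unique action maximizer on $\sigma$, this forces $\s_{\sigma-\partial\eta}<\s_\alpha(\tH_i)$, a contradiction. Hence $[\bar\sigma_L]\neq 0$ in $\HF_{|\alpha|}(\bx)$.
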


This lemma is proved in \cite[Lemma 3.2]{GG:nm} for $\s_{[M]}$, but
the proof goes through word-for-word for any homology class. For the
sake of brevity, we omit the argument.

\section{Background results on pseudo-rotations}
\label{sec:background}
In this section, we assemble several known symplectic topological
results on pseudo-rotations of projective spaces, crucial for what
follows.

Let $\varphi=\varphi_H$ be a pseudo-rotation of $\CP^n$, which we do
not assume to be non-degenerate. Denote by $\alpha_l$ the generator in
$\HF_{2l-n}(H)=\F$, $l\in\Z$, and let $\bx_{l}\in\bPP_1(H)$ be an action
carrier for $\alpha_l$. We will write $\s_l:=\s_{\alpha_l}$. Then, in
particular,
$$
\s_{l}(H)=\CA_H(\bx_l)\textrm{ and } \HF_{2l-n}(\bx_l)\neq 0.
$$
Throughout the paper it is convenient to occasionally rescale the
standard Fubini--Study symplectic structure $\omega$ on
$\CP^n$. Hence, we state the results in a slightly more general form
assuming that $\omega$ is proportional to the Fubini--Study form and
$\lambda=\left<\omega,\CP^1\right>$ is the rationality constant. (For
the Fubini--Study form, $\lambda=\pi$.)

\begin{Theorem}[\cite{GG:gaps}]
\label{thm:bijection}
For every $l\in \Z$ the action carrier $\bx_l$ is unique and the
resulting map
\begin{equation}
\label{eq:map-x_i}
\Z\to\bPP_1(H), \quad l\mapsto\bx_l
\end{equation}
is a bijection. Furthermore, the map
\begin{equation}
\label{eq:map-s}
\Z\to\CS(H), \quad l\mapsto \s_{l}(H)=\CA_H(\bx_l)
\end{equation}
is strictly monotone, i.e., $l>l'$ if and only if
$\CA_H(\bx_l)>\CA_H(\bx_{l'})$.  
\end{Theorem}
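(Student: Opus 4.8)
The plan is to exploit the quantum-homology module structure on filtered Floer homology together with the fact that $\HQ_*(\CP^n)$ is generated by the hyperplane class $[\CP^{n-1}]$. Write $\Phi := \Phi_{[\CP^{n-1}]}$ for the cap product action of the hyperplane class; by \eqref{eq:cap-action} it raises degree by $|[\CP^{n-1}]| - 2n = -2$, and by the relation $[\CP^{n-1}] * \alpha_l = \alpha_{l-1}$ from Example \ref{ex:cpn} it sends, after identifying $\HF_*(H) = \HQ_*(\CP^n)$, the generator $\alpha_l$ to $\alpha_{l-1}$ (and $\alpha_{-n+\cdots}$ down through the $\qq$-shift, so that \eqref{eq:qp-cpn} governs the $N=n+1$ periodicity). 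Since $\varphi_H$ is a pseudo-rotation, there are exactly $n+1$ one-periodic orbits up to recapping, hence exactly $n+1$ critical values of $\CA_H$ modulo $\lambda\Z$, and by the Example in Section \ref{sec:FFH} the Floer differential vanishes: $\HF_m(H) = \F$ in degrees $\equiv n \pmod 2$ and $0$ otherwise. In particular each $\HF_{2l-n}^{(a,b)}(H)$ is built entirely out of the local pieces $\HF_*(\bx)$ with $\CA_H(\bx) \in (a,b)$, with no cancellation.

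First I would pin down, for each $l$, that $\HF_{2l-n}(\bx_l) \ne 0$ and hence (by \eqref{eq:supp}) $|\hmu(\bx_l) - (2l-n)| \le n$; this is exactly Lemma \ref{lemma:ac} applied to the class $\alpha_l$, once we know each one-periodic orbit is isolated — which holds automatically here, since a pseudo-rotation has finitely many fixed points. Next I would show the map $l \mapsto \bx_l$ is well-defined as a map to uncapped orbits (choosing the capping so that the carrier's action equals $\s_l(H)$) and then prove injectivity and surjectivity simultaneously by a counting argument: there are exactly $n+1$ uncapped one-periodic orbits, and the $n+1$ residues $\s_l(H) \bmod \lambda\Z$, $l = 0, \ldots, n$, must be pairwise distinct — if two carriers $\bx_l$ and $\bx_{l'}$ with $l \not\equiv l' \pmod{n+1}$ were the same uncapped orbit, then recapping would force $\hmu$ and $\mu$ to take values in an interval too short to accommodate both constraints $|\hmu(\bx_l) - (2l-n)| \le n$ and the analogous one for $l'$ once the $\qq$-shift is accounted for; this is where the inequality \eqref{eq:mean-cz} and the mod-$2N$ structure \eqref{eq:recap}, \eqref{eq:delta} do the work. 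Having $n+1$ distinct residues among $n+1$ orbits forces the map to be a bijection on the nose.

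For the monotonicity statement, the key input is the cap product again. The action selector is sub-additive and, more to the point, the inclusion $\Phi\colon \HF^{(a,b)}_{2l-n}(H) \to \HF^{(a,b)+I_\omega([\CP^{n-1}])}_{2(l-1)-n}(H)$ with $I_\omega([\CP^{n-1}]) = -\lambda/(n+1) < 0$ shifts the action window downward. Chasing the class $\alpha_l$ through $\Phi$ and using that $\Phi$ is surjective onto the span of $\alpha_{l-1}$ (since $[\CP^{n-1}]*\alpha_l = \alpha_{l-1}$ and, for the top case, \eqref{eq:qp-cpn}), one gets $\s_{l-1}(H) \le \s_l(H) + I_\omega([\CP^{n-1}]) < \s_l(H)$, i.e.\ the selectors strictly decrease as $l$ decreases. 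Combined with injectivity of $l \mapsto \bx_l$ (so no two selectors coincide), this yields that $l \mapsto \s_l(H) = \CA_H(\bx_l)$ is strictly monotone increasing, which is the last assertion. I would also need the reverse bound $\s_l(H) \le \s_{l-1}(H) + (\text{something})$ via the opposite cap action, or simply note that strict monotonicity plus the known periodicity $\s_{l+n+1}(H) = \s_l(H) - \lambda$ forces the values to be distinct and correctly ordered; the cleanest route is to observe that the $n+1$ values $\s_0(H), \ldots, \s_n(H)$ descend strictly under iterated $\Phi$ and close up after $n+1$ steps with a drop of exactly $\lambda$ (as $I_\omega([\CP^{n-1}])^{*(n+1)} = I_\omega(\qq) = -\lambda$), so each individual $\Phi$-step must drop the action by exactly $\lambda/(n+1)$ — forcing strict monotonicity with no slack.

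The main obstacle I anticipate is the bijectivity argument — specifically, ruling out the possibility that the carriers $\bx_l$ for different $l$ (mod $n+1$) coincide as uncapped orbits, or that some uncapped orbit fails to arise as a carrier at all. The surjectivity is the delicate half: a priori some one-periodic orbit could be "invisible" to every spectral invariant. The resolution has to come from the nonvanishing $\HF_*(H) = \HQ_*(\CP^n)$ being entirely assembled from local pieces (no differential), so that every orbit contributes a nonzero local homology group in some degree, forcing it to carry some $\s_l$; then the pigeonhole on $n+1$ orbits versus $n+1$ residue classes closes the argument. Getting the bookkeeping of cappings, the mod-$2N$ index shift, and the mod-$\lambda$ action shift to line up consistently is the part that needs care, but it is exactly the content that \cite{GG:gaps} established, so I would cite that work for the routine verifications.
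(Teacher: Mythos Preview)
Your overall architecture---use the cap product with $[\CP^{n-1}]$ to compare $\s_l$ and $\s_{l-1}$, then combine with the periodicity $\s_{l+(n+1)}=\s_l+\lambda$ to pin down the $n+1$ orbits---matches the paper's sketch. But there is a genuine gap in your monotonicity step.

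You write that $\Phi_{[\CP^{n-1}]}$ shifts the action window by $I_\omega([\CP^{n-1}])=-\lambda/(n+1)<0$. This is wrong: $[\CP^{n-1}]$ lies in the $\qq^0$-component of $\HQ_*(\CP^n)$, so $I_\omega([\CP^{n-1}])=0$ and the cap product preserves the action window. Consequently the displayed inequality $\s_{l-1}(H)\le \s_l(H)+I_\omega([\CP^{n-1}])<\s_l(H)$ collapses to $\s_{l-1}(H)\le \s_l(H)$, which is only weak monotonicity. Your fallback---that the total drop over $n+1$ steps is $\lambda$ so each step drops by exactly $\lambda/(n+1)$---also fails: weak monotonicity plus a fixed total drop does not force equal increments, and in fact the increments are generally unequal (they encode the action spectrum).

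The missing ingredient is the one the paper names explicitly: the orbits are \emph{isolated}. The point is Lemma~\ref{lemma:trivial}: the cap action of any class of degree less than $2n$ is zero on local Floer homology. Hence if $\bx$ carries $\alpha_l$, the image $\Phi_{[\CP^{n-1}]}(\alpha_l)=\alpha_{l-1}$ cannot still be supported on $\bx$; the carrier must change, and because distinct isolated orbits have distinct actions (or more precisely, a nontrivial Floer trajectory connecting them has positive energy), this forces $\s_{l-1}(H)<\s_l(H)$ strictly. This is what \cite[Prop.~6.2]{GG:gaps} establishes. Once you have the strict chain $\s_0<\cdots<\s_n<\s_0+\lambda$, the bijectivity follows by the pigeonhole argument you outline (which is the paper's order: monotonicity first, then bijectivity). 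Your reversed order---bijectivity first via index bookkeeping---can be made to work in the non-degenerate case, but in the degenerate case you cannot assume the Floer differential vanishes or that local Floer homology is concentrated in one degree, so the cleaner route is the paper's.
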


One important consequence of the theorem is that distinct capped
one-periodic orbits of $\varphi_H$ necessarily have different
actions. Another is that, by Lemma \ref{lemma:ac},
$\HF_{2l-n}(\bx_l)\neq 0$. In particular, $\HF(x)\neq 0$ for all
$x\in\PP(H)$.

When $H$ is non-degenerate we have $\mu(\bx_l)=2l-n$, and the proof of
the theorem is rather straightforward. The general case of the theorem
is established in \cite[Sect.\ 6]{GG:gaps}. First, one shows that the
map \eqref{eq:map-s} is strictly monotone. This is a consequence of
the facts that the orbits are isolated and
$[\CP^{n-1}] * \alpha_l=\alpha_{l-1}$; see \cite[Prop.\
6.2]{GG:gaps}. Next, we have the relation
\begin{equation}
\label{eq:lambda-bound}
\s_{l+(n+1)}(H)=\s_l(H)+\lambda,
\end{equation}
which follows from \eqref{eq:qp-cpn}.  Now the chain of the
inequalities
$$
\cdots<\s_0(H)<\dots<\s_n(H)<\s_{n+1}(H)=\s_0(H)+\lambda<\cdots,
$$
is used to infer the uniqueness of $\bx_l$ and the assertion that the
map \eqref{eq:map-x_i} is a bijection. (The argument is similar to the
proof of the degenerate case of Arnold's conjecture for $\CP^n$; see
\cite{Fl, Fo, FW} and also \cite{Oh:constr, Sc:invent} and a detailed
account in \cite[Sect.\ 6.2.3]{GG:gaps}.) Note also that we
automatically have the identity $\bx_{l+ (n+1)}=\bx_l\# \CP^1$.

\begin{Example}[Rotations of $\CP^n$, I]
\label{ex:cpn}
In the context of this paper, a rotation of $\CP^n$ is a Hamiltonian
diffeomorphism $\varphi_Q$ generated by a quadratic Hamiltonian
$Q=\sum a_i|z_i|^2$, where we identified $\CP^n$ with the quotient of
the unit sphere $S^{2n+1}\subset \C^{n+1}$ by the diagonal (Hopf)
$S^1$-action. A calculation shows that $\varphi_Q$ is strongly
non-degenerate if and only if it is a pseudo-rotation and if and only
if $a_j-a_i\not\in\Q$ for all pairs $j\neq i$. Then
$\PP(Q)=\PP_1(Q)=\{x_0,\ldots,x_n\}$ is the set of the coordinate axes
and, without loss of generality, we may assume that
$a_0<\ldots<a_n$. Furthermore, we can also require that $\sum a_i=0$
or, equivalently, that the mean value of $Q$ is zero; for the
Hamiltonian $\sum|z_i|^2$ is constant on $\CP^n$. We have
$$
\CS(H)=\bigsqcup a_i+\lambda\Z,
$$
where $\lambda=\pi$ when $\omega$ is the standard Fubini--Study
symplectic form. Denote by $\hx_i$ the constant orbit $x_i$ equipped
with the trivial capping. By a calculation, one can see that the
eigenvalues of $d\varphi_Q$ at $x_i$ are
$\exp\big(\pm 2\lambda \sqrt{-1}(a_j-a_i)\big)$, where $j\neq i$, and
$\hmu(\hx_i)=2(n+1)a_i/\lambda$. If $Q$ is $C^2$-small and non-degenerate,
$\mu(\hx_i)=2i-n$. We will elaborate on this example in \cite{GG:PR2}.
\end{Example}

Theorem \ref{thm:bijection} enables us to extend the notion of the
Conley--Zehnder index to capped one-periodic orbits $\bx_l$ of
degenerate pseudo-rotations by setting $\mu(\bx)=2l-n$ for
$\bx=\bx_l$. We will call $\mu(\bx)$ the \emph{LS-index}; for it
ultimately comes from a version of Lusternik--Schnirelmann theory for
action selectors which the proof of the theorem is based on. When $x$,
the one-periodic orbit underlying $\bx$, is non-degenerate this is
just the ordinary Conley--Zehnder index. Without non-degeneracy, the
LS-index is a global rather than local invariant. However, it has some
of the expected properties of the Conley--Zehnder index, e.g.,
$\mu(\bx\#A)=\mu(\bx)+I_{c_1}(A)$ and $\HF_{\mu(\bx)}(\bx)\neq
0$. Moreover, when $n=1$, $\HF(\bx)$ is concentrated in only one
degree which is $\mu(\bx)$; see \cite{GG:gap}. With this notion in
mind, Theorem \ref{thm:bijection} can be rephrased as that the
ordering of $\bPP_1(H)$ by the LS-index agrees with that by the
action.

Next, recall that the \emph{augmented action} of $H$ on $x\in\PP_1(H)$
is defined by
\begin{equation}
\label{eq:aug-action}
\tCA_H(x)=\CA_H(\bx)-\frac{\lambda}{2N}\hmu(\bx),
\end{equation}
where on the right we used an arbitrary capping of $x$. (Note that the
augmented action is defined for every monotone manifold; see
\cite{GG:gaps}.) It is clear that the augmented action is well
defined, i.e., independent of the capping, and homogeneous with
respect to iteration. Furthermore, $\tCA_H(x)$ is completely
determined, once $H$ is normalized to have zero mean, by the time-one
map $\varphi_H$ viewed as an element of $\Ham(\CP^n)$ rather than
$\tHam(\CP^n)$; \cite{EP}.

\begin{Theorem}[Action-index Resonance Relations; Thm.\ 1.12 in
  \cite{GG:gaps}]
\label{thm:act-index}
Let $x_0,\ldots, x_n$ be the fixed points of a pseudo-rotation
$\varphi_H$ of $\CP^n$. Then
$$
\tCA_H(x_0)=\ldots=\tCA_H(x_n).
$$
\end{Theorem}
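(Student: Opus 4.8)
The plan is to extract the resonance relation from the two structural facts already in hand: the action--index bijection of Theorem \ref{thm:bijection} and the recapping identity $\s_{l+(n+1)}(H)=\s_l(H)+\lambda$ from \eqref{eq:lambda-bound}. First I would express each augmented action $\tCA_H(x_l)$ in terms of the spectral invariants. Choose for each $l\in\{0,\ldots,n\}$ the action carrier $\bx_l$ with $\CA_H(\bx_l)=\s_l(H)$ and LS-index $\mu(\bx_l)=2l-n$; by the discussion after Theorem \ref{thm:bijection} the $n+1$ orbits $x_0,\ldots,x_n$ so obtained are exactly the fixed points of $\varphi_H$. The mean index $\hmu(\bx_l)$ satisfies $|\hmu(\bx_l)-(2l-n)|\le n$ by \eqref{eq:supp} together with $\HF_{2l-n}(\bx_l)\ne 0$, and since $\varphi_H$ is a pseudo-rotation one knows (Example \ref{ex:cpn}, and more conceptually the fact that all orbits must be of maximal index concentration for the count to work out) that the orbits are elliptic, so in fact $\hmu(\bx_l)$ is pinned down more precisely; but the cleanest route avoids needing ellipticity, as I explain below.

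Second, I would compare consecutive carriers using the cap product. The relation $[\CP^{n-1}]*\alpha_{l}=\alpha_{l-1}$ together with the filtration shift \eqref{eq:cap-action} gives $\s_{l-1}(H)\le \s_l(H) - \lambda/N$ with $N=n+1$, while iterating $n+1$ times and using \eqref{eq:qp-cpn} yields $\s_{l-(n+1)}(H)=\s_l(H)-\lambda$. Hence the $n+1$ gaps $\s_l(H)-\s_{l-1}(H)$ for $l=1,\ldots,n+1$ are each $\ge \lambda/N$ and sum to $\lambda$, forcing every one of them to equal $\lambda/N$. Thus $\s_l(H)=\s_0(H)+l\lambda/N$ for all $l\in\Z$. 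The same telescoping argument applied to the mean index spectrum --- using that $\hmu(\bx_{l+(n+1)})=\hmu(\bx_l)+2N$ from \eqref{eq:delta}, that $\hmu$ is (weakly) monotone along the list by the index estimate, and that again $n+1$ nonnegative increments sum to $2N$ --- gives $\hmu(\bx_l)=\hmu(\bx_0)+2l$. (It is exactly here that the argument needs the orbits to be elliptic, equivalently weakly non-degenerate, so that the increments $\hmu(\bx_l)-\hmu(\bx_{l-1})$ are positive and bounded below; this is part of what Theorem \ref{thm:bijection} and its proof in \cite{GG:gaps} deliver, since $\HF_{2l-n}(\bx_l)\ne 0$ together with the support estimate \eqref{eq:supp} and the strict version for weakly non-degenerate orbits pins the increments.)

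Third, I would simply substitute. For any $l$,
\[
\tCA_H(x_l)=\CA_H(\bx_l)-\frac{\lambda}{2N}\hmu(\bx_l)
=\s_0(H)+\frac{l\lambda}{N}-\frac{\lambda}{2N}\bigl(\hmu(\bx_0)+2l\bigr)
=\s_0(H)-\frac{\lambda}{2N}\hmu(\bx_0),
\]
which is independent of $l$; since $\{x_0,\ldots,x_n\}$ is the full fixed point set, this is the claimed equality $\tCA_H(x_0)=\cdots=\tCA_H(x_n)$. I expect the main obstacle to be the second step's control of the mean index increments: one must be sure that no gap in the mean index spectrum degenerates to zero, i.e., that the fixed points are genuinely elliptic (or at least weakly non-degenerate with the right index jump), so that the ``$n+1$ positive increments summing to $2N$'' pigeonhole is valid. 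This is not automatic from the action side alone and is the content of the finer analysis in \cite{GG:gaps}; everything else is bookkeeping with \eqref{eq:delta}, \eqref{eq:lambda-bound}, \eqref{eq:qp-cpn} and the definition \eqref{eq:aug-action}.
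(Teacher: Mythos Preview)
Your argument contains a genuine error in the second step. You claim that the cap product relation $[\CP^{n-1}]*\alpha_l=\alpha_{l-1}$ together with \eqref{eq:cap-action} yields $\s_{l-1}(H)\le \s_l(H)-\lambda/N$. But the filtration shift in \eqref{eq:cap-action} is by $I_\omega(\alpha)$, and for the pure homology class $[\CP^{n-1}]\in\H_*(M)$ (no Novikov part) one has $I_\omega([\CP^{n-1}])=0$. So the cap product by $[\CP^{n-1}]$ gives only $\s_{l-1}(H)\le \s_l(H)$, with strict inequality coming from the isolated-orbits argument in the proof of Theorem~\ref{thm:bijection}, but \emph{no quantitative gap}. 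Your pigeonhole step then collapses: $n+1$ positive gaps summing to $\lambda$ need not each equal $\lambda/(n+1)$.

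In fact the conclusion $\s_l(H)=\s_0(H)+l\lambda/N$ that you derive is simply false for pseudo-rotations. For a generic true rotation as in Example~\ref{ex:cpn}, the action spectrum is $\{a_0,\ldots,a_n\}+\lambda\Z$ with the $a_i$ arbitrary (subject to $a_j-a_i\notin\Q$), certainly not an arithmetic progression. The same failure afflicts your mean-index telescoping: the $\hmu(\bx_l)$ are not evenly spaced either. What \emph{is} true is only the combined statement $\CA_H(\bx_l)-\frac{\lambda}{2N}\hmu(\bx_l)=\const$, which does not decouple into separate arithmetic progressions for action and index.

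The paper does not prove Theorem~\ref{thm:act-index} in the text; it is quoted from \cite{GG:gaps}. The mechanism there, visible in \eqref{eq:quasi}, is entirely different: one passes to iterations and shows that for every fixed point $x_i$ the augmented action equals the asymptotic spectral invariant $\lim_{k\to\infty}\s_{[M]}(H^{\nat k})/k$, a quantity that is manifestly independent of $i$. The point is that under iteration the carrier for $[M]$ cycles through all the orbits $x_i$ (roughly because the LS-indices grow linearly at different rates), so each $\tCA_H(x_i)$ gets picked up by the limit.
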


This theorem has been extended to some other symplectic manifolds and
a broader class of Hamiltonian diffeomorphisms; see \cite{CGG}. It
also has an analog for Reeb flows; \cite[Sect.\ 6.1.2]{GG:convex}.

As a consequence of Theorem \ref{thm:act-index},
\begin{equation}
\label{eq:act-index2}
\CA_H(\bx)=\frac{\lambda}{2(n+1)}\hmu(\bx)+\const
\end{equation}
for every $\bx\in\bPP_1(H)$, where $\const=\tCA_H(x)$ is independent of
$x$. Surprisingly, very little is known about
$\tCA_H(x)$. Conjecturally, $\tCA_H(x)=0$ when $H$ is normalized to
have zero mean. It is known however (see \cite[Thm.\ 5.1]{GG:Rev})
that
\begin{equation}
\label{eq:quasi}
\tCA_H(x)=\lim_{k\to\infty}\frac{\s_{[M]}\big(H^{\nat
    k}\big)}{k}
\end{equation}
for pseudo-rotations of $M=\CP^n$. The limit on the right is the
asymptotic spectral invariant, extensively studied in \cite{EP} where
it is shown to be equal to a Calabi quasimorphism for many symplectic
manifolds including $\CP^n$ once $H$ is normalized to have zero mean.

The \emph{marked action spectrum} $\cCS(H)$ is, by definition, the
bijection
$$
\cCS\colon
\Z\stackrel{\eqref{eq:map-x_i}}{\longleftrightarrow}\bPP_1(H)
\stackrel{\eqref{eq:map-s}}{\longrightarrow}\CS(H),
$$
i.e., $\cCS(H)$ is simply the spectrum $\CS(H)$ with its points
labelled by $\Z$ (essentially the indices) or, equivalently, by
$\bPP_1(H)$. In a similar vein, the \emph{marked index spectrum}
$\cCSI(\varphi_H)$ is the map
$$
\cCSI\colon
\Z\stackrel{\eqref{eq:map-x_i}}{\longleftrightarrow}\bPP_1(H)
\longrightarrow\CSI(\varphi),
$$
where 
$$
\CSI(\varphi)=\{\hmu(\bx)\mid x\in\bPP_1(H)\}
$$ 
is the \emph{mean index spectrum} of $H$ and the second arrow is the
map $\bx\mapsto \hmu(\bx)$.  Then \eqref{eq:act-index2} can be
rephrased as
$$
\cCS(H)=\frac{\lambda}{2(n+1)}\cCSI(\varphi_H)+\const,
$$
i.e., the action spectrum and the index spectrum agree up
to a factor and a shift. The factor can be made equal 1 by scaling
$\omega$, and the shift can be made zero by adding a constant to $H$:
\begin{equation}
\label{eq:equal-spectra}
\cCS(H)=\cCSI(\varphi_H).
\end{equation}

Finally, we have a different type of resonance relations involving
only the indices. To state the result, recall that for an uncapped
one-periodic orbit $x\in\PP_1(H)$ the mean index is well defined
modulo $2(n+1)=2N$, i.e., $\hmu(x)\in S_{2N}^1=\R/2(n+1)\Z$.

\begin{Theorem}[Mean Index Resonance Relations; Sect.\ 1.2 in
  \cite{GK}]
\label{thm:index-RR}
Let $x_0,\ldots, x_n$ be the fixed points of a pseudo-rotation
$\varphi_H$ of $\CP^n$. Then for some non-zero vector
$\vr=(r_0,\ldots,r_n)\in \Z^{n+1}$, we have
$$
\sum r_i\hmu(x_i)= 0 \textrm{ in } \R/2(n+1)\Z.
$$
\end{Theorem}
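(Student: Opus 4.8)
The plan is to derive the mean index resonance relation from the action-index resonance relation (Theorem~\ref{thm:act-index}) together with the rationality of the action spectrum, exploiting the fact that the augmented actions of all fixed points coincide. First I would fix cappings: for each fixed point $x_i$ choose a capping $\bx_i$, so that $\CA_H(\bx_i)$ and $\hmu(\bx_i)$ are honest real numbers. By Theorem~\ref{thm:act-index} (in the form \eqref{eq:act-index2}) there is a single constant $c=\tCA_H(x)$ with
$$
\CA_H(\bx_i)=\frac{\lambda}{2(n+1)}\hmu(\bx_i)+c
$$
for every $i$. Subtracting the equation for $i$ from the one for $0$ eliminates $c$ and gives
$$
\CA_H(\bx_i)-\CA_H(\bx_0)=\frac{\lambda}{2(n+1)}\bigl(\hmu(\bx_i)-\hmu(\bx_0)\bigr).
$$

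The next step is to use rationality. The action spectrum $\CS(H)$ is contained in a finite union of cosets of $\lambda\Z$ in $\R$; more precisely, for a pseudo-rotation all of $\CS(H)$ lies in $\{a_0,\dots,a_n\}+\lambda\Z$ for suitable reals $a_i$ representing the uncapped actions, and recapping changes the action by an element of $\lambda\Z$. Hence the differences $\CA_H(\bx_i)-\CA_H(\bx_0)$ are, modulo $\lambda\Z$, the fixed quantities $a_i-a_0$ — but I do not even need to know these differences are rational; instead I observe that iterating the pseudo-rotation $k$ times multiplies every mean index and every action by $k$ (homogeneity: $\hmu(\bx^k)=k\hmu(\bx)$, $\CA_{H^{\nat k}}(\bx^k)=k\CA_H(\bx)$), while $H^{\nat k}$ is again a pseudo-rotation with the same constant relation. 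The real content is therefore a pigeonhole/linear-algebra statement: I want integers $r_i$, not all zero, with $\sum r_i\hmu(x_i)\equiv 0 \pmod{2(n+1)}$.

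To produce the $r_i$, I would argue as follows. Consider the vector $\hmu:=(\hmu(x_0),\dots,\hmu(x_n))\in(\R/2(n+1)\Z)^{n+1}=:\T^{n+1}$, equivalently its lift $(\hmu(\bx_0),\dots,\hmu(\bx_n))\in\R^{n+1}$ modulo the lattice $2(n+1)\Z^{n+1}$. The closure of the subgroup of $\T^{n+1}$ generated by $\hmu$ is a closed subgroup $G$; if $G$ is not all of $\T^{n+1}$ then its annihilator in the dual lattice is a nonzero sublattice of $\Z^{n+1}$, which immediately supplies the desired nonzero $\vr$ with $\sum r_i\hmu(x_i)=0$ in $\R/2(n+1)\Z$. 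So the claim reduces to showing that $\hmu$ does \emph{not} generate a dense subgroup of $\T^{n+1}$, i.e.\ the mean indices are ``rationally dependent'' modulo $2(n+1)$. This is where I invoke the action-index relation: from the displayed identity above, the differences $\hmu(\bx_i)-\hmu(\bx_0)$ are proportional to the action differences $\CA_H(\bx_i)-\CA_H(\bx_0)$ with proportionality constant $2(n+1)/\lambda$, and since $\CS(H)$ is $\lambda$-rational (a $\Q$-linear combination, in fact finite union of $\lambda\Z$-cosets), these differences lie in $2(n+1)\cdot(\text{a finitely generated subgroup of }\Q)$ modulo $2(n+1)\Z$. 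Running the argument on the iterates $H^{\nat k}$ and clearing denominators then forces a single integral relation.

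The main obstacle, and the step I would spend the most care on, is precisely the rationality input: one must know that the \emph{differences} of actions of the fixed points — not merely $\CS(H)$ as a subset of $\R$ — are rational multiples of $\lambda$. This does not follow from monotonicity alone; it uses that $\CP^n$ is rational with rationality constant $\lambda$, so that for any two capped orbits the action difference modulo $\lambda\Z$ is a topological invariant of the \emph{uncapped} pair, and then Theorem~\ref{thm:bijection} pins down which cosets occur. Once one has that $\CA_H(\bx_i)-\CA_H(\bx_0)\in\Q\cdot\lambda$, the action-index identity converts this into $\hmu(\bx_i)-\hmu(\bx_0)\in\Q\cdot 2(n+1)$, i.e.\ a rational vector after rescaling, and the existence of the integer relation vector $\vr$ is then pure linear algebra over $\Q$ followed by clearing denominators. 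I would present the argument in this order: (1) fix cappings and record \eqref{eq:act-index2}; (2) establish $\lambda$-rationality of the action differences via rationality of $\CP^n$ and Theorem~\ref{thm:bijection}; (3) transport rationality to the mean-index differences; (4) conclude the integral relation by linear algebra over $\Q$, noting that the relation is independent of the choice of cappings since recapping shifts each $\hmu(\bx_i)$ by a multiple of $2(n+1)$.
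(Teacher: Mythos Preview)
Your argument has a genuine gap at step (2)/(3): the claim that the action differences $\CA_H(\bx_i)-\CA_H(\bx_0)$ lie in $\Q\cdot\lambda$ is false. Rationality of $\CP^n$ says only that $\left<[\omega],\pi_2(\CP^n)\right>=\lambda\Z$, so that \emph{recapping} shifts the action by an element of $\lambda\Z$; it places no constraint whatsoever on the action values themselves, which depend on $H$ and are typically irrational multiples of $\lambda$. Concretely, take the rotation of $\CP^1$ from Example~\ref{ex:cpn} with $Q=a|z_0|^2-a|z_1|^2$ and $a=\sqrt{2}$. Then $\CA_Q(\hx_0)-\CA_Q(\hx_1)=2\sqrt{2}$, which is not a rational multiple of $\lambda=\pi$, and correspondingly $\hmu(\hx_0)-\hmu(\hx_1)=8\sqrt{2}/\pi\notin\Q\cdot 4$. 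The resonance that actually holds here is $\hmu(x_0)+\hmu(x_1)=0$, coming from $a_0+a_1=0$; nothing in your chain of reasoning detects it. Theorem~\ref{thm:bijection} does not help either: it identifies which $\lambda$-coset each action occupies, but the cosets themselves are indexed by real numbers, not rationals.

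More broadly, the action--index relation \eqref{eq:act-index2} is an \emph{equality} between two real numbers, so it cannot by itself produce an integer relation among the $\hmu(x_i)$; you would need an independent integer relation among the actions, and none is available. The actual source of Theorem~\ref{thm:index-RR} is homological and not action-theoretic: the paper imports the result from \cite{GK}, where it is proved by exploiting the fact that $\dim\HF_m(H^{\nat k})=1$ for every $m$ of the correct parity and every $k$. Counting the capped $k$-periodic orbits $\bx_i^k\#\ell A_0$ whose Conley--Zehnder index falls in a growing window and comparing with the known rank of Floer homology forces, via a Weyl-equidistribution argument, the iterates $k\vDelta$ to avoid being equidistributed in $\T^{n+1}$; equivalently, $\vDelta$ lies in a proper closed subgroup, and a nonzero annihilating $\vr\in\Z^{n+1}$ exists. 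No analogue of this counting is present in your outline.
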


In other words, the closed subgroup $\Gamma\subset \T^{n+1}$
topologically generated by the \emph{mean index vector}
\begin{equation}
\label{eq:vDelta}
\vDelta=\vDelta(\varphi_H):=\big(\hmu(x_0),\ldots,\hmu(x_n)\big)\in
\T^{n+1}=\R^{n+1}/2(n+1)\Z^{n+1}
\end{equation}
has positive codimension. Moreover, the codimension is equal to the
number of linearly independent resonances, i.e., the rank of the
subgroup $\CR\subset\Z^{n+1}$ formed by all resonances $\vr$; see
\cite{GK}.

Clearly, $\CR$ depends on $\varphi_H$. However, conjecturally, the
resonance relation
\begin{equation}
\label{eq:sum-RR}
\sum\hmu(x_i)=0 \textrm{ in } \R/2(n+1)\Z.
\end{equation}
is universal, i.e., satisfied for all pseudo-rotations. (Up to a
factor this is the only possible universal resonance relation; for, as
is easy to see, any other relation breaks down for a suitably chosen
rotation; \cite{GG:PR2}.) For $S^2$, \eqref{eq:sum-RR} asserts,
roughly speaking, that $D\varphi$ rotates the tangent spaces at the
fixed points by the same angle but in opposite directions. This is a
consequence of the Poincar\'e--Birkhoff theorem, \cite[Appendix
A.2]{Br:Annals}; see also \cite{CKRTZ} for a different approach based
on Theorem \ref{thm:index-RR}. In \cite{GG:PR2} we will revisit this
conjecture and establish it for non-degenerate pseudo-rotations of
$\CP^2$.

\begin{Example}[Rotations of $\CP^n$, II]
\label{ex:cpn-2}
In the setting of Example \ref{ex:cpn}, we have $\tCA_Q(x_i)=0$ as a
direct calculation shows. As we have done throughout this section, let
us denote by $\bx_i$ the orbit $x_i$ capped so that
$\mu(\bx_i)=2i-n$. (By Theorem \ref{thm:bijection}, such a capping
exists and is unique.) When $Q$ is $C^2$-small, $\bx_i=\hx_i$. One can
show that
$$
\sum\hmu(\bx_i)=0,
$$
although this is not obvious. Thus, in particular, \eqref{eq:sum-RR}
holds. When $Q$ is $C^2$-small, this follows from that $\sum a_i=0$
and $\hmu(\hx_i)=2(n+1)a_i/\lambda$ by Example \ref{ex:cpn}.  The
general case is proved in \cite{GG:PR2}.
\end{Example}

We conclude this discussion by the following sample illustrative
application of the three theorems from this section to dynamics of
pseudo-rotations.

\begin{Corollary}
\label{cor:S^2-nondeg}
Every pseudo-rotation of $S^2$ is strongly non-degenerate and its
fixed points are elliptic.
\end{Corollary}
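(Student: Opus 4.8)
The plan is to exploit the two resonance relations (Theorems \ref{thm:act-index} and \ref{thm:index-RR}) together with the index--action dictionary of Theorem \ref{thm:bijection}. First I would prove strong non-degeneracy. Suppose $\varphi=\varphi_H$ is a pseudo-rotation of $S^2=\CP^1$. By Theorem \ref{thm:bijection} the two fixed points $x_0,x_1$ carry the action selectors $\s_0$ and $\s_1$, with LS-indices $0-1=-1$ and $2-1=1$, i.e.\ $\mu(\bx_0)=-1$ and $\mu(\bx_1)=1$ in the normalization where $N=2$. If one of the fixed points, say $x_0$, were degenerate, then (being isolated, as all periodic points of a pseudo-rotation are) it would have to be totally degenerate: for $n=1$ a weakly non-degenerate degenerate orbit is impossible since any non-elliptic eigenvalue configuration in dimension two forces either non-degeneracy or total degeneracy. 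For a totally degenerate orbit all eigenvalues of $d\varphi_H$ are $1$, so $d\varphi_H|_{x_0}$ is unipotent; its mean index is an even integer, $\hmu(\bx_0)\in 2\Z$. But by \eqref{eq:supp} and the fact (cited after Theorem \ref{thm:bijection}, from \cite{GG:gap}) that for $n=1$ the local Floer homology $\HF(\bx)$ is concentrated in the single degree $\mu(\bx)$, we have $\mu(\bx_0)\in[\hmu(\bx_0)-1,\hmu(\bx_0)+1]$, forcing $\hmu(\bx_0)$ to be an odd integer, a contradiction. Hence both fixed points are non-degenerate.

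Next I would rule out the hyperbolic case. Given non-degeneracy, each fixed point is either elliptic or hyperbolic. A hyperbolic fixed point $x_i$ has $\hmu(\bx_i)\in\Z$ and, more to the point, $d\varphi_H|_{x_i}$ has real eigenvalues, so the mean index equals the Conley--Zehnder index and is an \emph{integer}; moreover a hyperbolic orbit of a pseudo-rotation would be an iterated-free simple orbit whose presence I would like to say already contradicts the structure, but the cleanest route is arithmetic. By \eqref{eq:equal-spectra} (after scaling $\omega$ and shifting $H$) we have $\cCS(H)=\cCSI(\varphi_H)$, so in particular $\hmu(\bx_{l+2})=\hmu(\bx_l)+2N=\hmu(\bx_l)+4$ and the two uncapped mean indices $\hmu(x_0),\hmu(x_1)\in\R/4\Z$ satisfy, by Example \ref{ex:cpn-2} / the conjectured-but-here-available relation, or directly by Theorem \ref{thm:act-index} rewritten via \eqref{eq:act-index2}, the constraint $\hmu(x_1)=\hmu(x_0)+2\pmod 4$ together with the resonance $r_0\hmu(x_0)+r_1\hmu(x_1)=0$ in $\R/4\Z$ from Theorem \ref{thm:index-RR}. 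If $x_0$ is hyperbolic then $\hmu(x_0)\in\Z$, hence $\hmu(x_1)\in\Z$ as well. But then the mean index vector $\vDelta$ generates a finite subgroup of $\T^2$, which forces $k\vDelta=0$ for some $k\in\N$; combined with the nondegeneracy this means $\varphi_H^k$ has all its fixed points with zero-mean-index cappings, and a standard argument (the one underlying the Conley conjecture for surfaces, or more simply that an iterate would then be homologically visible in a way incompatible with having only $n+1=2$ periodic points) yields infinitely many periodic orbits --- contradicting that $\varphi$ is a pseudo-rotation. So both fixed points are elliptic.

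The main obstacle I anticipate is the second step: turning ``the mean indices are rational'' into ``infinitely many periodic orbits.'' The honest way to close this gap without invoking heavy surface dynamics is to use \emph{iteration}. If $\hmu(x_0)$ and $\hmu(x_1)$ are rational, pick $k$ with $k\hmu(x_i)\equiv 0\pmod{2N}$ for both $i$; then for the iterated Hamiltonian $H^{\nat k}$ every fixed point admits a capping with mean index $0$, hence (by \eqref{eq:mean-cz} and non-degeneracy, or \eqref{eq:supp}) with Conley--Zehnder index in $\{-n,\dots,n\}=\{-1,0,1\}$, and in fact $0$ by parity since $n=1$ is odd forces odd CZ-index --- wait, that parity is exactly what gives the contradiction: $\mu(\bx_i^k)$ must be odd, but a mean-zero capping forces it into $\{-1,1\}$, which is fine, so one must instead observe that $\HF(x_i^k)$ sits in degree $\pm1$ for \emph{both} $i$, yet they must carry the two generators $\alpha_0,\alpha_1$ of $\HF_{\mp1}$ in a way consistent with Theorem \ref{thm:bijection} applied to $H^{\nat k}$, and a short count shows this is impossible unless the actions were already distinct in a pattern only an irrational rotation number produces. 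I would present this bookkeeping carefully, as it is the delicate heart of the corollary; everything else is a direct appeal to the three theorems of this section.

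Actually, the slick alternative --- and the one I would ultimately write --- is to avoid iteration entirely: for $S^2$, Theorem \ref{thm:index-RR} gives a single resonance $r_0\hmu(x_0)+r_1\hmu(x_1)=0$ in $\R/4\Z$, and combining it with $\hmu(x_0)+\hmu(x_1)=0$ (Example \ref{ex:cpn-2}) shows $(r_0-r_1)\hmu(x_0)=0$ in $\R/4\Z$, so $\hmu(x_0)$ is rational \emph{unless} $r_0=r_1$; but $r_0=r_1$ makes the resonance $r_0(\hmu(x_0)+\hmu(x_1))=0$, which is vacuous, contradicting that $\vr$ is a \emph{nonzero} resonance giving $\Gamma$ positive codimension --- unless $\Gamma$ itself is positive-codimensional for the trivial reason that $\hmu(x_0)$ is rational. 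Tracing this dichotomy: either $\hmu(x_0)\in\Q$, in which case the iteration argument above produces infinitely many periodic orbits, a contradiction; or $\hmu(x_0)\notin\Q$, in which case $d\varphi_H|_{x_0}$ cannot have real eigenvalues (those would force $\hmu(x_0)\in\Z$), so $x_0$ is elliptic, and symmetrically $x_1$ is elliptic. I expect writing the non-rational-implies-elliptic implication and the rational-implies-contradiction implication to each take a paragraph, with the latter being where all the care goes.
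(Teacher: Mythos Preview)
Your proposal has genuine gaps in both steps.

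\textbf{Non-degeneracy.} You correctly note that a totally degenerate fixed point in dimension two has $\hmu(\bx_0)\in 2\Z$, and that the LS-index $\mu(\bx_0)=-1$ lies in $[\hmu(\bx_0)-1,\hmu(\bx_0)+1]$. But this does \emph{not} force $\hmu(\bx_0)$ to be odd: the values $\hmu(\bx_0)=0$ and $\hmu(\bx_0)=-2$ are perfectly consistent with both constraints. No contradiction arises here.

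\textbf{Ellipticity.} You repeatedly invoke $\hmu(x_0)+\hmu(x_1)=0\pmod 4$ (equivalently $\hmu(x_1)=\hmu(x_0)+2$), citing Example~\ref{ex:cpn-2}. But that example concerns only true \emph{rotations}; for general pseudo-rotations this is the conjectural relation~\eqref{eq:sum-RR}, known for $S^2$ only via the Poincar\'e--Birkhoff theorem---exactly the kind of two-dimensional input the corollary is meant to illustrate one can bypass. Theorem~\ref{thm:act-index} alone gives $\hmu(\bx_1)-\hmu(\bx_0)\in(0,4)$ for the cappings with LS-indices $\pm 1$, not $=2$. Your ``slick alternative'' collapses without this relation, and the remaining bookkeeping never closes.

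The paper's argument is much shorter and handles both assertions at once. The key observation is that in dimension two a fixed point that is degenerate \emph{or} hyperbolic has integer mean index; there is no need to separate the cases. One then passes to a large iterate and invokes Theorem~\ref{thm:index-RR} to arrange that \emph{both} fixed points have mean index $\equiv 0\pmod 4$. The contradiction---which you circle around but never land---comes directly from~\eqref{eq:act-index2} and Theorem~\ref{thm:bijection}: after normalizing so that $\cCS(H)=\cCSI(\varphi_H)$, equal mean indices for suitable cappings of $x$ and $y$ force $\CA_H(\bx)=\CA_H(\by)$, but Theorem~\ref{thm:bijection} says distinct capped orbits have distinct actions. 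Your iteration idea is headed the right way; the missing punchline is to convert the index coincidence into an action coincidence via Theorem~\ref{thm:act-index} and then invoke Theorem~\ref{thm:bijection}, rather than chasing parity.
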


The corollary is a standard, although ultimately highly non-trivial,
result in two-dimensional dynamics; see \cite{Fr92}.

\begin{proof} Arguing by contradiction, assume that $\varphi_H$ is a
  pseudo-rotation of $S^2$ and some iterate of $\varphi_H$ is
  degenerate or that one of its fixed points is hyperbolic. In
  dimension two, a hyperbolic or degenerate fixed point necessarily
  has integer mean index. Hence, replacing $\varphi_H$ by a
  sufficiently large iterate and using Theorem \ref{thm:index-RR}, we
  may assume that both fixed points $x$ and $y$ of $\varphi_H$ have
  mean index equal to zero modulo 4. Let us scale the symplectic
  structure and adjust the Hamiltonian so that
  $\cCS(H)=\cCSI(\varphi_H)$. Then, by \eqref{eq:act-index2},
  $\CA_H(\bx)=\CA_H(\by)$ for suitable cappings of $x$ and $y$, which
  is impossible by Theorem \ref{thm:bijection}.
  \end{proof}

\section{Invariant sets}
\label{sec:sets}

Our goal in this section is to partially generalize some of the
results of Le Calvez and Yoccoz, \cite{LCY}, and of Franks and
Misiurewicz, \cite{Fr99,FM}, on invariant sets in dimension two to
higher dimensions. To lay out the context, recall that the key feature
of the examples of pseudo-rotations of $S^2$ constructed by Anosov and
Katok in \cite{AK} (see also, e.g., \cite{FK}) is that these
pseudo-rotations $\varphi$ have exactly three ergodic measures: the
two fixed points and the area form. Thus on the complement to the two
fixed points $\varphi$ is uniquely ergodic and even though $\varphi$
is not uniquely ergodic on $S^2$ it is very close to being so. Volume
preserving, uniquely ergodic maps of compact manifolds are necessarily
minimal, i.e., all orbits are dense; see, e.g., \cite{Wa}. Therefore,
one might also expect every orbit of $\varphi$ other than a fixed
point to be dense. (In a similar vein, every symplectic manifold that
admits a symplectic $S^1$-action without fixed points also admits a
minimal symplectomorphism; \cite{HCP}.)  This turns out to be
false. As shown in \cite[Prop.\ 5.5]{FM} drawing from the
aforementioned results, the set of periodic points of a topologically
transitive homeomorphism of $S^2$ cannot be simultaneously finite and
isolated as an invariant set. As a consequence, a pseudo-rotation of
$S^2$ must have orbits other than the fixed points entirely contained
in an arbitrarily small neighborhood of the fixed point set. While the
full scope of \cite[Prop.\ 5.5]{FM} is certainly out of the reach of
symplectic methods, its application to area-preserving smooth
pseudo-rotations fits well in the symplectic framework. We have the
following partial generalization of this result already mentioned in
the introduction as Theorem \ref{thm:inv_sets0}.

\begin{Theorem}
\label{thm:inv_sets}
No fixed point of a pseudo-rotation of $\CP^n$ is isolated as an
invariant set.
\end{Theorem}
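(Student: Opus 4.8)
The plan is to reduce Theorem \ref{thm:inv_sets} to the assertion that a Hamiltonian diffeomorphism of $\CP^n$ with a fixed point that is isolated as an invariant set and has non-vanishing local Floer homology must have infinitely many periodic orbits (Theorem \ref{thm:isolated}, announced in the introduction). Granting this, the argument is short. Let $\varphi=\varphi_H$ be a pseudo-rotation of $\CP^n$ and suppose, arguing by contradiction, that one of its fixed points $x$ is isolated as an invariant set, i.e., there is a neighborhood $U$ of $x$ such that the only $\varphi$-invariant subset of $U$ is $\{x\}$ itself. By Theorem \ref{thm:bijection} and Lemma \ref{lemma:ac}, every fixed point of $\varphi$ carries some action selector $\s_l$, so $\HF_{2l-n}(\bx)\neq 0$; in particular $\HF_*(x)\neq 0$. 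Thus $x$ satisfies both hypotheses of Theorem \ref{thm:isolated}, which forces $\varphi$ to have infinitely many periodic orbits. But a pseudo-rotation has exactly $n+1$ periodic points by Definition \ref{def:PR}, a contradiction. Hence no fixed point of $\varphi$ is isolated as an invariant set.

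First I would make precise the notion ``isolated as an invariant set'': the point $x$ is \emph{not} isolated as an invariant set if every punctured neighborhood of $x$ contains a nonempty compact $\varphi$-invariant set not meeting $x$; equivalently, there are orbits other than $x$ itself contained in arbitrarily small neighborhoods of $x$. This matches the way the result is phrased in the introduction (``existence of invariant sets in arbitrarily small punctured neighborhoods of the fixed points'') and the two-dimensional statement of \cite[Prop.\ 5.5]{FM}. Then I would record, as the one nontrivial input needed, the local Floer homology nonvanishing: this is exactly the ``another consequence'' noted right after Theorem \ref{thm:bijection}, namely $\HF(x)\neq 0$ for all $x\in\PP(H)$, which is itself a consequence of Lemma \ref{lemma:ac} applied to the (unique) action carrier of each $\alpha_l$.

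The only real content is therefore Theorem \ref{thm:isolated} itself, whose proof is deferred to Section \ref{sec:energy-isolated}; within the scope of the present section the proof of Theorem \ref{thm:inv_sets} is purely a matter of assembling the pieces. So in the body of this section I would state Theorem \ref{thm:isolated} (the degenerate-orbit extension of \cite[Thm.\ 1.1]{GG:hyperbolic}): if a Hamiltonian diffeomorphism of $\CP^n$ has a fixed point $x$ with $\HF_*(x)\neq 0$ that is isolated as an invariant set, then it has infinitely many periodic orbits — and then give the two-line deduction above.

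\textbf{The main obstacle} is not in this section but in Theorem \ref{thm:isolated}: upgrading the hyperbolic-fixed-point argument of \cite{GG:hyperbolic} to an arbitrary degenerate fixed point with nonzero local Floer homology. The hyperbolic case exploits the exponential growth of energy of Floer connecting trajectories that must ``go around'' the fixed point, together with the fact that the stable/unstable manifolds are genuinely of lower dimension; for a degenerate orbit one instead needs a quantitative energy–capacity estimate controlling how long a Floer trajectory can linger near $x$ while staying in the small neighborhood $U$, combined with the fact that $x$ being isolated as an invariant set means such trajectories cannot remain confined — a target-local compactness and Gromov-type argument in the spirit of the proof of Lemma \ref{lemma:spec}, producing a contradiction from the boundedness of the relevant action window unless new periodic orbits appear. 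That argument is exactly what Section \ref{sec:energy-isolated} is devoted to, and it is where all the analytic work lives; the present theorem is then immediate.
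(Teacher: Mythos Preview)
Your overall strategy matches the paper's: reduce to Theorem \ref{thm:isolated} by checking that a fixed point of a pseudo-rotation has nonvanishing local Floer homology, then derive a contradiction. However, there is a genuine gap. You paraphrase Theorem \ref{thm:isolated} as requiring only $\HF_*(x)\neq 0$, but its actual hypothesis is that $\HF(x^k)\neq 0$ for \emph{all} $k\in\N$. This is not a harmless omission: the implication $\HF(x)\neq 0 \Rightarrow \HF(x^k)\neq 0$ is not known in general (see the Remark following the proof of Theorem \ref{thm:inv_sets} in the paper, where this is stated as a conjecture). Your argument, as written, only verifies the case $k=1$ and then invokes a theorem whose hypotheses you have not checked.

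The fix is exactly the one-line observation the paper makes and you omit: since $\varphi$ is a pseudo-rotation, so is $\varphi^k$ for every $k\in\N$ (the periodic points are the same $n+1$ fixed points), and therefore the same Lusternik--Schnirelmann/action-carrier argument via Theorem \ref{thm:bijection} and Lemma \ref{lemma:ac} applied to $\varphi^k$ gives $\HF(x^k)\neq 0$ for each $k$. Once you insert this sentence, your proof is correct and essentially identical to the paper's.
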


The theorem is in turn a consequence of the following general result;
cf.\ Example~\ref{ex:cpn}.

\begin{Theorem}
\label{thm:isolated}
Let $M^{2n}$ be a strictly monotone symplectic manifold. Assume that
the minimal Chern number $N\geq n+1$ and
$$
\alpha *\beta = \qq [M]
$$
in $\HQ_*(M)$ for some homology classes $\alpha\in \H_*(M)$ and
$\beta\in \H_*(M)$ with $|\alpha|<n$ and $|\beta|<n$.  Let $\varphi$
be a Hamiltonian diffeomorphism of $M$ with a contractible periodic
orbit $x$ which is isolated as an invariant set and such that
$\HF(x^k)\neq 0$ for all $k\in\N$. Then $\varphi$ has infinitely many
periodic points.
\end{Theorem}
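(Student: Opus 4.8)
The plan is to argue by contradiction: assume $\varphi=\varphi_H$ has finitely many periodic points while $x$ is isolated as an invariant set and $\HF(x^k)\neq 0$ for all $k$. The strategy follows the template of \cite[Thm.\ 1.1]{GG:hyperbolic}, combining a quantitative energy estimate with the cap-product action of $\HQ_*(M)$, but now the orbit $x$ is allowed to be degenerate, so the input from local Floer homology must be used more carefully. Since there are only finitely many periodic points, for every period $k$ the orbit $x^k$ is an isolated one-periodic orbit of $H^{\nat k}$, and we may pick a capping $\bx^k$ of it. The hypothesis $\HF(x^k)\neq 0$ together with \eqref{eq:supp} confines $\supp\HF_*(\overline{x^k})$ to the window $[\,\hmu(\overline{x^k})-n,\ \hmu(\overline{x^k})+n\,]=[\,k\hmu(\bx)-n,\ k\hmu(\bx)+n\,]$, a window of fixed width $2n$ that drifts linearly in $k$.

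First I would set up the cap-product mechanism. The class $\gamma:=\alpha*\beta=\qq[M]\in\HQ_*(M)$ has $|\gamma|=|\qq|+2n=-2N+2n\le -2$ (using $N\ge n+1$), and $I_\omega(\gamma)=-\lambda$. On the local Floer homology $\HF_*(\overline{x^k})$, Lemma \ref{lemma:trivial} applies to each of $\alpha,\beta$ separately because $|\alpha|<n<2n$ and $|\beta|<n<2n$, so $\Phi_\alpha=0=\Phi_\beta$ on $\HF_*(\overline{x^k})$, and hence $\Phi_\gamma=\Phi_\alpha\Phi_\beta=0$ there. The point of Theorem \ref{thm:isolated} is that this \emph{local} vanishing is incompatible with $x$ being isolated as an invariant set, once we look at all iterates. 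The mechanism: if the iterates $\bx^k$ were, for infinitely many $k$, isolated from all other one-periodic orbits of $H^{\nat k}$ with nearby action, then by Lemma \ref{lemma:spec}(i) the local complex $\CF_*(\tH^{\nat k},\overline{x^k})$ would split off as a direct summand of $\CF_*^I(\tH^{\nat k})$ for a small interval $I$ around $\CA_{H^{\nat k}}(\bx^k)$, and a standard argument would then produce genuine Floer trajectories for $H^{\nat k}$ (counted by $\Phi_\gamma$ acting on the global homology) that are forced to stay in a small neighborhood $U$ of $x$ — because the energy $E(u)=|I_\omega(\gamma)|=\lambda$ is independent of $k$, whereas the action carriers $\bx_l$ of the spectral invariants are, by Theorem \ref{thm:bijection} and the augmented-action relation, spread out with spacing $\to\infty$ relative to $\hmu$, so only the orbit $x^k$ itself is available as an asymptotic limit. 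Running the target-local compactness argument of the proof of Lemma \ref{lemma:spec}, together with the a priori energy bound, one extracts in the limit $k\to\infty$ (after reparametrizing) a nonconstant finite-energy solution $v$ of the Floer equation for $H$ with both asymptotics equal to $x$ and image in $U$; but $\Phi_\gamma\neq0$ on $\HQ_*(M)$ (as $\gamma=\qq[M]\neq0$) forces such trajectories to exist in a way that contradicts $\Phi_\gamma=0$ on the local homology $\HF_*(\bx^k)$. This is the heart of the matter, and the contradiction shows that for every sufficiently large $k$ the orbit $x^k$ must be connected by a Floer trajectory (for $H^{\nat k}$) to some \emph{other} one-periodic orbit of $H^{\nat k}$, i.e.\ to some $y^k$ with $y\neq x$ a $k$-periodic point — but all those are among our finitely many periodic points, so the same $y$ works for infinitely many $k$, and one then builds an honest $\varphi$-invariant set in $U$ out of the resulting finite-energy half-cylinders, contradicting isolatedness of $x$.

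Concretely, I would organize the write-up as follows. Step 1: Reduce to the statement that there is a neighborhood $U$ of $x$ such that $U$ contains, for every $k$, the image of a finite-energy Floer solution for $H^{\nat k}$ asymptotic at one end to $x^k$ — this is where $\HF(x^k)\neq0$ plus the cap-product $\gamma=\qq[M]$ plus the spreading of action selectors (Theorems \ref{thm:bijection}, \ref{thm:act-index}) enter, essentially as in \cite{GG:hyperbolic}. Step 2: Use the a priori energy bound $\eps$ and the energy identity $E(u)=\CA(\bx)-\CA(\by)$ together with $|I_\omega(\gamma)|=\lambda$ to bound the energy uniformly in $k$. Step 3: Apply target-local compactness (\cite{Fi}) and a diagonal argument over $L\to\infty$ and $k\to\infty$ to produce the desired invariant structure in $U$; shrinking $U$ gives a nontrivial proper invariant subset in every neighborhood of $x$, so $x$ is not isolated as an invariant set. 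The main obstacle is Step 1 in the degenerate case: for nondegenerate $x$ one has a clean index computation $\mu(\overline{x^k})$ growing linearly, but when $x$ is degenerate one only controls the \emph{support window} of $\HF_*(\overline{x^k})$, of fixed width $2n$, so one must argue that the cap action $\Phi_\gamma$ — which shifts degree by $|\gamma|-2n=-2N\le -2(n+1)$ and shifts action by $-\lambda$ — cannot be absorbed within that window and therefore must either connect $x^k$ to a distinct periodic orbit or produce the trapped trajectory; controlling this uniformly in $k$, and ensuring the limiting solution $v$ is nonconstant (nonzero energy), is the delicate point, handled exactly by the a priori lower bound $\eps$ on energies of Floer trajectories between isolated orbits.
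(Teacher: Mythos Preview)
There is a genuine gap. The decisive technical input is a \emph{crossing energy} bound: a constant $c_\infty>0$, \emph{uniform in $k$}, such that every solution of the Floer equation for $H^{\nat k}$ asymptotic to $x^k$ at one end and touching the boundary of a fixed isolating neighborhood $U$ has energy at least $c_\infty$. You never formulate or prove this. The ``a priori lower bound $\eps$ on energies of Floer trajectories between isolated orbits'' you invoke is the standard bound for a fixed Hamiltonian; it depends on $k$ and gives nothing as $k\to\infty$. Your Step~3 is in fact the right heuristic for \emph{establishing} such a bound --- if a sequence of crossing solutions had energy $\to 0$, target-local compactness (\cite{Fi}) produces in the limit an integral curve of $\varphi_H^t$ trapped in $\bar U$ yet touching $\p U$, contradicting the isolating property --- but you have not carried this out, and as written you conflate Floer trajectories with $\varphi$-orbits: a limit of Floer half-cylinders with merely bounded energy is another Floer solution, not an ``invariant set'' of $\varphi$, unless the energy is first driven to zero.

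Even granting a crossing-energy bound, a second ingredient is missing: one must exhibit, for some $k$, a crossing trajectory of energy \emph{below} $c_\infty$. This is done by a Kronecker approximation: with finitely many periodic points one chooses $k$ so that all $k$-periodic actions lie within $\eps<c_\infty$ of $\lambda\Z$; the factorization $\Phi_\alpha\Phi_\beta=\Phi_{\qq[M]}$ together with Lemma~\ref{lemma:trivial} then forces an intermediate capped orbit $\by$ outside $U$ with $\CA_{H^{\nat k}}(\bx^k)\geq\CA_{H^{\nat k}}(\by)\geq\CA_{H^{\nat k}}(\bx^k)-\lambda$, and the action clustering forces one of the two connecting trajectories to have energy $<\eps$ --- the contradiction. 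Your substitute for this step, ``the spreading of action selectors (Theorems~\ref{thm:bijection}, \ref{thm:act-index})'', is misplaced: those results concern pseudo-rotations of $\CP^n$ and say nothing about a general strictly monotone $M$, nor do they produce any clustering of actions.
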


Theorem \ref{thm:isolated}, proved in Section
\ref{sec:energy-isolated}, significantly relaxes the dynamical
requirements in \cite[Thm.\ 1.1]{GG:hyperbolic}, where the orbit $x$
is assumed to be hyperbolic, at the expense of imposing more
restrictive conditions on $M$ and, in particular, on $N$. As in that
theorem the assumption that the orbit $x$ is contractible can be
relaxed when $M$ is toroidally monotone, but we have no examples of
positive monotone manifolds with $\pi_1(M)\neq 1$ meeting the
conditions of the theorem. (See however Remark \ref{rmk:isolated}.)
In fact, the only positive monotone manifold with $N\geq n+1$ known to
us is $\CP^n$. The requirements of the theorem are met by numerous
negative monotone manifolds, but in this case the Conley conjecture
holds and $\varphi$ has infinitely many periodic orbits
unconditionally; see \cite{CGG, GG:nm}. However, the result we
actually prove (Theorem \ref{thm:isolated2}) is slightly more precise
than Theorem \ref{thm:isolated} and it gives additional information
about the augmented actions of periodic orbits, even when $M$ is
negative monotone.

We also note that the assumption that a periodic orbit $x$ is isolated
as an invariant set imposes a strong restriction on the dynamics. In
particular, $x$ can easily be isolated as a periodic orbit for all
iterations but not isolated as an invariant set.

\begin {proof}[Proof of Theorem \ref{thm:inv_sets}] 
  Let $\varphi$ be a Hamiltonian pseudo-rotation of $\CP^n$. As is
  pointed out in Section \ref{sec:background}, $\HF(x)\neq 0$ for
  every fixed point $x$ of $\varphi$ by Lemma \ref{lemma:ac}. Since
  $\varphi^k$ is also a pseudo-rotation for every $k\in\N$, we have
  $\HF(x^k)\neq 0$. Now it remains to apply Theorem \ref{thm:isolated}
  to $M=\CP^n$. Indeed, if $x$ were isolated as an invariant set,
  $\varphi$ would have infinitely many periodic orbits by Theorem
  \ref{thm:isolated}, and hence would not be a pseudo-rotation.
\end{proof}

\begin{Remark}
  The condition that $\HF(x^k)\neq 0$ is automatically satisfied when
  the Hopf index of $x^k$ is non-zero. In all examples known to us,
  $\HF(x^k)\neq 0$ for all $k\in\N$ whenever $\HF(x)\neq 0$ and $x^k$
  is isolated. Furthermore, $\HF(x^k)\neq 0$ when $\HF(x)\neq 0$ and
  $k$ is admissible, i.e., none of the Floquet multipliers of $x$ is a
  root of unity of degree $k$ (see \cite[Thm.\ 1.1]{GG:gap}), and we
  conjecture that this is true for all $k$.
  \end{Remark}

\section{Lagrangian Poincar\'e recurrence and $C^0$-rigidity}
\label{sec:PR+PO}
In this section we prove two results which roughly speaking assert
that under suitable extra conditions a sequence of iterates
$\varphi^{k_i}$ of a pseudo-rotation converges to the identity in the
appropriate metric. The first of these results (Theorem
\ref{thm:gamma}) is closely related to the Lagrangian Poincar\'e
recurrence conjecture (see Theorem \ref{thm:LPR}) while the second
(Theorem \ref{thm:exp-L}) is a higher-dimensional analog of the
$C^0$-rigidity theorem from~\cite{Br}.

\subsection{Lagrangian Poincar\'e recurrence and the $\gamma$-norm
  convergence}
\label{sec:PR}
\subsubsection{The $\gamma$-norm convergence}
\label{sec:gamma}
We start this section by briefly recalling the definition of the
$\gamma$-norm and the related $\gamma$-metric on $\Ham(M)$. This norm
was originally introduced in \cite{HZ, Vi:gen} for compactly supported
Hamiltonian diffeomorphisms of $\R^{2n}$ using generating
functions. The construction was then extended to closed symplectically
aspherical manifolds in \cite{Sc} and to all rational weakly monotone
manifolds in \cite{Oh:gamma}. We refer the reader to these papers for
the original definitions and a much more detailed treatment.

Let $M^{2n}$ be a closed, weakly monotone, rational symplectic
manifold and let $\varphi$ be a Hamiltonian diffeomorphism of $M$
generated by a Hamiltonian $H$. It is convenient to assume that
$H_t\equiv 0$ for $t$ close to $0$. Set
$$
H^{\inv}_t=-H_t\circ \varphi_t.
$$  
This is a periodic in time Hamiltonian generating the time-dependent
flow $\big(\varphi^t\big)^{-1}$. (Alternatively, one can take the
Hamiltonian generating the time-dependent flow $\varphi_H^{-t}$ as
$H^\inv$.) By definition,
$$
\gamma(H)=\s_{[M]}(H)+\s_{[M]}(H^\inv) \textrm{ and }
\gamma(\varphi)=\inf_H\gamma(H),
$$
where the infimum is taken over all $H$ generating the time-one map
$\varphi$. (In fact, $\gamma(H)$ is independent of $H$ as long as the
time-dependent flow $\varphi_H^t$ remains in the same homotopy class
with fixed end points. For a broad class of manifolds, but not for
$\CP^n$, we have $\gamma(H)=\gamma(\varphi)$ for all Hamiltonians $H$
generating $\varphi$.)  Furthermore, set
$$
d_\gamma(\varphi,\psi):=\gamma\big(\varphi\psi^{-1}\big).
$$
It is a standard fact that $d_\gamma$ is a right-invariant distance on
$\Ham(M)$; see, e.g., \cite{Oh:gamma}.

As a consequence of the Poincar\'e duality in Floer homology (see,
e.g., \cite[Lemma 2.2]{EP}), for some symplectic manifolds including
$\CP^n$, we have
\begin{equation}
\label{eq:H^inv}
\s_{[M]}(H^\inv)=-\s_{[\pt]}(H)
\end{equation}
and thus
\begin{equation}
\label{eq:gamma(H)}
\gamma(H)=\s_{[M]}(H) -\s_{[\pt]}(H).
\end{equation}

\begin{Theorem}
\label{thm:gamma}
Let $\varphi=\varphi_H$ be a pseudo-rotation of $\CP^n$. Then there
exist a constant $C>0$ and a non-negative integer $d\leq n$, both
depending only on $\vDelta(\varphi)$, such that for every $\eps$ such
that $0<\eps\leq \lambda$, we have
\begin{equation}
\label{eq:gamma}
\liminf_{k\to\infty} \frac{|\{\ell\leq k \mid 
\gamma(\varphi^\ell)<\eps\}|}{k}\geq C\eps^d.
\end{equation}
In particular, the limit inferior is positive.
\end{Theorem}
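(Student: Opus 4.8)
The plan is to reduce the statement to a quantitative equidistribution fact about the orbit $k\vDelta$ of the mean index vector in the closed subgroup $\Gamma\subset\T^{n+1}$ generated by $\vDelta$, and to exploit the fact that $\gamma(\varphi^k)$ is controlled by the spacing in the marked action/index spectrum. Concretely, combining \eqref{eq:gamma(H)}, the identification $\cCS(H)=\cCSI(\varphi_H)$ in \eqref{eq:equal-spectra} (after rescaling $\omega$ so the factor is one; the general $\lambda$ is recovered by homogeneity), and Theorem \ref{thm:bijection}, one sees that for an appropriate Hamiltonian generating $\varphi^k$ the quantity $\gamma(\varphi^k)$ is essentially the gap in the marked spectrum $\cCS(H^{\nat k})$ between the action carriers for $[\CP^n]$ and for $[\pt]$. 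Using homogeneity of the action under iteration ($\CA_{H^{\nat k}}(\bx^k)=k\CA_H(\bx)$) and \eqref{eq:act-index2}, this gap is comparable to $\|k\vDelta\|$ — the distance from $k\vDelta$ to $0$ in $\T^{n+1}$ — up to a universal multiplicative constant depending only on $n$. So the heart of the matter becomes: estimate from below the frequency of integers $k\leq K$ with $\|k\vDelta\|<\delta$ for $\delta$ proportional to $\eps$.

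The key steps, in order, are as follows. First I would carefully set up the dictionary between $\gamma(\varphi^k)$ and the spectral gap, keeping track of which Hamiltonian generates $\varphi^k$ and of the mean-value normalization; this uses \eqref{eq:H^inv}, \eqref{eq:gamma(H)}, Theorems \ref{thm:bijection} and \ref{thm:act-index}, and \eqref{eq:equal-spectra}, together with the identity $\bx_{l+(n+1)}=\bx_l\#\CP^1$ to locate the two relevant carriers modulo $\lambda$. Second, I would prove the purely group-theoretic lemma: if $\Gamma\subset\T^{n+1}$ is the closed subgroup topologically generated by $\vDelta$, then $\Gamma$ is a finite union of cosets of a subtorus $\T^d$ (here $d$ is the integer in the statement, $d\leq n$ by the Mean Index Resonance Relation, Theorem \ref{thm:index-RR}, which forces a nontrivial resonance so $d\le n$), and the sequence $k\vDelta$ equidistributes with respect to normalized Haar measure on $\Gamma$; hence $\liminf_K \frac1K|\{k\le K:\|k\vDelta\|<\delta\}|$ is bounded below by the Haar measure of the $\delta$-ball in $\Gamma$, which is $\geq C'\delta^d$ for a constant $C'=C'(\vDelta)$ coming from the volume of a $d$-ball. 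Third, I would combine the two to get \eqref{eq:gamma}, choosing $C$ to absorb the comparison constant between $\gamma(\varphi^k)$ and $\|k\vDelta\|$ and the restriction $\eps\le\lambda$ (which guarantees we stay within one fundamental period of the $\lambda\Z$-periodicity of the spectrum, so the ``wrap-around'' ambiguity in \eqref{eq:lambda-bound} does not cause trouble). Positivity of the $\liminf$ is then immediate since $d\ge 0$ and $C\eps^d>0$.

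The main obstacle, I expect, is the first step — getting a clean two-sided comparison $c_n\|k\vDelta\|\le\gamma(\varphi^k)\le C_n\|k\vDelta\|$ uniformly in $k$. The subtlety is that $\gamma$ is defined as an infimum over Hamiltonians generating $\varphi^k$, and for $\CP^n$ one does \emph{not} have $\gamma(H)=\gamma(\varphi)$ in general; one must check that the natural choice $H^{\nat k}$ (or a mean-normalized version) is close enough to optimal, or work directly with the capped-orbit picture where Theorem \ref{thm:bijection} pins down all action values. The other delicate point is that $\|k\vDelta\|$ measures distance in $\T^{n+1}$ with the quotient metric mod $2(n+1)\Z^{n+1}$, whereas the spectral gap lives mod $\lambda$; the factor $\lambda/(2(n+1))$ in \eqref{eq:act-index2} is exactly what reconciles these, but one has to verify that the carrier for $[\pt]$ sits at the ``bottom'' of a $\lambda$-block and the carrier for $[\CP^n]$ at the ``top,'' so that their action difference is $\lambda$ minus the relevant mean-index gap, and then argue that this gap is comparable to $\|k\vDelta\|$ rather than to some smaller quantity. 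Once this comparison is established with constants depending only on $n$ (and the factor $\lambda$ handled by scaling), the rest is the standard Weyl equidistribution argument on the compact abelian group $\Gamma$ plus an elementary volume estimate for small balls in a $d$-torus.
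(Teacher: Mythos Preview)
Your overall strategy matches the paper's: reduce to equidistribution of $k\vDelta$ in the closed subgroup $\Gamma\subset\T^{n+1}$, set $d=\dim\Gamma\leq n$ via Theorem~\ref{thm:index-RR}, and use a volume lower bound $C\eps^d$ for small balls in $\Gamma$. However, two points in your execution are off and should be corrected.

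First, you do \emph{not} need a two-sided comparison $c_n\|k\vDelta\|\leq\gamma(\varphi^k)\leq C_n\|k\vDelta\|$. Only the upper bound matters for the density estimate \eqref{eq:gamma}, and this dissolves your worry about the infimum in the definition of $\gamma$: simply use $\gamma(\varphi^k)\leq\gamma(H^{\nat k})$ and bound $\gamma(H^{\nat k})$ directly. After rescaling so that $\lambda=2(n+1)$ and normalizing $H$ so that $\CA_H(\bx)=\hmu(\bx)$, you want to show $\gamma(H^{\nat k})<\eps$ whenever $\|k\vDelta\|<\eps/2$.

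Second, your picture of the carriers sitting at the ``top'' and ``bottom'' of a $\lambda$-block, with action difference ``$\lambda$ minus the mean-index gap,'' is wrong---that would give $\gamma$ close to $\lambda$, not close to $0$. The correct mechanism, which the paper uses and which you are missing, is this: when $\|k\vDelta\|<\eps/2$ the entire spectrum $\CS(H^{\nat k})=\CSI(\varphi^k)$ lies in the $\eps/2$-neighborhood of $2(n+1)\Z$, forming clusters. The carrier $\bx$ for $[\pt]$ has LS-index $-n$, and since $-n\in\supp\HF_*(\bx)\subset[\hmu(\bx)-n,\,\hmu(\bx)+n]$ by \eqref{eq:supp}, one gets $-2n\leq\hmu(\bx)\leq 0$; as $\hmu(\bx)$ is $\eps/2$-close to some $2(n+1)q$ and $\eps$ is small (say $\eps<4$), this forces $q=0$. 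The same argument places the carrier for $[\CP^n]$ (LS-index $n$, hence $0\leq\hmu\leq 2n$) in the \emph{same} cluster at $0$. Thus $\gamma(H^{\nat k})=\s_{[\CP^n]}-\s_{[\pt]}<\eps$. This use of the local Floer homology support bound is the step that pins down the clusters, and it is what your proposal does not supply.
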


\begin{Corollary}[$\gamma$-norm convergence]
\label{cor:gamma}
Let $\varphi$ be a pseudo-rotation of $\CP^n$. Then
$\gamma(\varphi^{k_i})\to 0$ for some sequence $k_i\to\infty$.
\end{Corollary}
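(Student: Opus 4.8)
The plan is to deduce Corollary \ref{cor:gamma} directly from Theorem \ref{thm:gamma} by a pigeonhole/density argument. Fix $\eps>0$ with $\eps\le\lambda$. Theorem \ref{thm:gamma} provides a constant $C>0$ and an integer $d\le n$, depending only on $\vDelta(\varphi)$ and hence fixed once $\varphi$ is fixed, such that
\[
\liminf_{k\to\infty}\frac{|\{\ell\le k\mid \gamma(\varphi^\ell)<\eps\}|}{k}\ge C\eps^d>0 .
\]
In particular the set $S_\eps:=\{\ell\in\N\mid \gamma(\varphi^\ell)<\eps\}$ has positive lower density, so it is infinite; a fortiori it is nonempty and contains arbitrarily large integers.

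From here I would produce the sequence $k_i$ by a diagonal selection. Apply the above to the decreasing sequence $\eps=\eps_i:=\min(\lambda,1/i)$, $i\in\N$. Each $S_{\eps_i}$ is infinite, so one can choose inductively $k_i\in S_{\eps_i}$ with $k_i>k_{i-1}$ (and $k_1$ arbitrary in $S_{\eps_1}$); this is possible precisely because $S_{\eps_i}$ contains integers larger than any prescribed bound. Then $k_i\to\infty$ by construction and $\gamma(\varphi^{k_i})<\eps_i\le 1/i\to 0$, which is exactly the assertion of the corollary.

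There is essentially no obstacle here: the corollary is a soft consequence of the quantitative statement, and the only point worth noting is that $d$ and $C$ depend only on $\varphi$ (through $\vDelta(\varphi)$), not on $\eps$, so that the lower density bound $C\eps^d$ does not degenerate to $0$ for any fixed $\eps>0$ in the admissible range $0<\eps\le\lambda$. Restricting attention to $\eps\le\lambda$ costs nothing since we only need arbitrarily small $\eps$. One could equivalently phrase the conclusion as saying that the iterates of $\varphi$ accumulate at $\id$ in the $d_\gamma$-metric, i.e. $d_\gamma(\varphi^{k_i},\id)=\gamma(\varphi^{k_i})\to 0$.
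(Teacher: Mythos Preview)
Your argument is correct and is exactly the intended deduction: the paper treats Corollary \ref{cor:gamma} as an immediate consequence of Theorem \ref{thm:gamma} without giving a separate proof, and your density-plus-diagonal-extraction argument is the natural way to spell this out.
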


\begin{Remark}
\label{rmk:gamma}
To the best of our knowledge, both the theorem and the corollary are
new even when $n=1$.  It might be possible to relax the conditions of
Theorem \ref{thm:gamma} by combining its proof with the proof of
\cite[Thm.\ 1.1]{CGG} and extend the theorem, or at least Corollary
\ref{cor:gamma}, to perfect Hamiltonian diffeomorphisms of $\CP^n$.
Furthermore, recall that $\gamma(\varphi)$ is \emph{a priori} bounded
for all $\varphi\in\Ham(\CP^n)$ and, in fact,
$\gamma(\varphi)\leq \lambda$; see \cite{EP} and also \cite{McD}. A
simple way to see this in the context of the paper is to use
\eqref{eq:H^inv} and \eqref{eq:gamma(H)} together with the fact that
$\s_{[\pt]}(H)\geq \s_{[M]}(H)-\lambda$ by
\eqref{eq:lambda-bound}. This shows that the condition that
$\eps\leq \lambda$ is not really restrictive: when $\eps>\lambda$ the
density on the left-hand side of \eqref{eq:gamma} is equal to one. In
fact, the theorem is most interesting for small values of $\eps$.

  The constant $d$ in Theorem \ref{thm:gamma} and also Theorem
  \ref{thm:LPR} below is the difference $d=n+1-r$, where $r$ is the
  number of linearly independent resonance relations the mean indices
  $\hmu(x_i)$ satisfy; see \cite{GK} or Section \ref{sec:background}.
\end{Remark}

\begin{Remark}
  Few manifolds $M$ are expected to admit Hamiltonian diffeomorphisms
  $\varphi$ such that $\gamma\big(\varphi^{k_i}\big)\to 0$ for some
  sequence $k_i\to\infty$. For instance, hypothetically, this is never
  the case when $M$ is symplectically aspherical. (As far as we know,
  this question/conjecture is due to L. Polterovich;
  we learned of it from Seyfaddini.) There is also a similar
  question for the $C^0$- or $C^1$-norm and in this instance some partial
  results are available. For example, it is not hard to see that one
  can never have $\varphi^{k_i}\to \id$ in the $C^1$-sense when $M$ is
  symplectically aspherical; cf.\ \cite{Po}.
\end{Remark}

\begin{proof}
  Throughout the proof, it will be convenient to rescale the
  symplectic structure on $\CP^n$ so that $[\omega]=2c_1(T\CP^n)$ and
  hence $\lambda=2(n+1)$, and to normalize the Hamiltonian to ensure
  that all fixed points have zero augmented action. Then
  \eqref{eq:equal-spectra} holds, $\cCS(H)=\cCSI(\varphi)$, or, in
  other words,
$$
\CA_H(\bx)=\hmu(\bx)
$$
for every capped one-periodic orbit $\bx$ of $\varphi$. Since the
augmented action is homogeneous this is also true for all iterates
$H^{\nat k}$. Furthermore, since
$\gamma(\varphi^k)\leq \gamma\big(H^{\nat k}\big)$, it suffices to
prove the theorem for $\gamma\big(H^{\nat k}\big)$ in place of
$\gamma(\varphi^k)$.

Observe also that we only need to prove the theorem for small
$\eps>0$, e.g., for every $\eps< 4$. Then the result for the entire range of
$\eps$ from $0$ to $\lambda=2(n+1)$ will follow by adjusting the value
of $C$.

Consider the mean index vector $\vDelta$ defined by \eqref{eq:vDelta}
and let $\Gamma\subset \T^{n+1}$ be the subgroup topologically
generated by this vector. Thus $\Gamma$ is the closure of the positive
semi-orbit $\CO=\{k\vDelta\mid k\in\N\}$. The connected component of
the identity in $\Gamma$ is a torus, and $\Gamma$ is isomorphic to the
direct product of this torus and a cyclic group of order
$k_0$. Replacing $\varphi$ by $\varphi^{k_0}$ we can assume that
$\Gamma$ is connected, and hence isomorphic to a torus. Set
$d=\dim\Gamma$. Since the components of $\vDelta$ satisfy at least one
resonance relation by Theorem \ref{thm:index-RR}, we have $d\leq n$.

The volume of the intersection of the $\eps/2$-neighborhood $B(\eps)$
of $0$ in $\T^{n+1}$ with $\Gamma$ is bounded from below by $C\eps^d$,
where $C$ is determined by the geometry of $\Gamma$:
\begin{equation}
\label{eq:vol}
\vol\big(B(\eps)\cap\Gamma\big)\geq C\eps^d.
\end{equation}
The semi-orbit $\CO$ is uniformly distributed in $\Gamma$ and hence to
prove \eqref{eq:gamma} it is enough to show that
\begin{equation}
\label{eq:eps}
\gamma\big(H^{\nat k}\big)<\eps\textrm{ whenever }
k\vDelta\in B(\eps).
\end{equation}

To this end, it is convenient to equip $\T^{n+1}$ with the metric
generated by the norm (the distance to the origin)
$$
\|\vtheta\|=\max_i\|\theta_i\|,
$$
where $\vtheta=(\theta_0,\ldots,\theta_n)\in\T^{n+1}$ and $\|\cdot\|$
on the right-hand side stands for the distance to zero in
$\R/2(n+1)\Z$. Thus $B(\eps)$ is actually a cube with faces
perpendicular to the ``coordinate axes'' and the diameter of
$\T^{n+1}$ is $2(n+1)$. (The choice of a norm on $\T^{n+1}$ effects
only the value of the constant $C$ in \eqref{eq:vol} and
\eqref{eq:gamma}.)

Assume that $k\vDelta\in B(\eps)$. Then the spectrum
$\CS\big(H^{\nat k}\big)=\CSI(\varphi^k)$ is contained in the
$\eps/2$-neighborhood of $2(n+1)\Z$. We will call the part of this
spectrum lying in the $\eps/2$-neighborhood of one point of $2(n+1)\Z$
a cluster. To finish the proof we simply need to show that
$\s_{[M]}\big(H^{\nat k}\big)$, where $M=\CP^n$, and
$\s_{[\pt]}\big(H^{\nat k}\big)$ are in the same cluster. Indeed, then
$$
\gamma\big(H^{\nat k}\big)=\s_{[M]}\big(H^{\nat
  k}\big)-\s_{[\pt]}\big(H^{\nat k}\big)<\eps.
$$
We will prove that these action values are in fact in the cluster
centered at $0$. (Up to this point we could have worked directly with
the ``action vector'' instead of $\vDelta$, an element of $\T^{n+1}$
whose components are the actions of uncapped one-periodic orbits
viewed as points in $\R/\lambda\Z$. However, in the next step, the role
of $\vDelta$ becomes essential because of \eqref{eq:supp}.)

Focusing on $\s_{[\pt]}\big(H^{\nat k}\big)$, denote by $2(n+1)q$ the
center of the cluster containing this point. Our goal is to show that
$q=0$. Let $\bx$ be the action carrier for $[\pt]$, i.e., $\bx$ is the
capped $k$-periodic orbit uniquely determined by the condition
$$
\s_{[\pt]}\big(H^{\nat k}\big)=\CA_{H^{\nat k}}(\bx).
$$
This orbit has LS-index $-n$. (Recall that in the non-degenerate case
this is simply the Conley--Zehnder index.) Since
$$
\mu(\bx)=-n\in
\supp \HF(\bx)\subset [\hmu(\bx)-n,\,\hmu(\bx)+n]
$$
by \eqref{eq:supp}, we have $-2n\leq \hmu(\bx)\leq 0$. On the other
hand, $\big|\hmu(\bx)-2(n+1)q\big|<\eps/2$, and thus $q=0$ due to the
assumption that $\eps<4$. A similar argument shows that
$\s_{[M]}\big(H^{\nat k}\big)$ is also in the cluster centered at
$0$. This proves \eqref{eq:eps} and completes the proof of the
theorem.
\end{proof}

\begin{Remark}
\label{rmk:Delta-gamma}
It is clear from the proof that we have also established the inequality
$$
\gamma\big(\varphi^k\big)\leq \const\, \|\vDelta\big(\varphi^k\big)\|,
$$
where $\const>0$ depends only on $\vDelta(\varphi)$. In particular, if
$\|\vDelta\big(\varphi^{k_i}\big)\|$ converges to zero, the sequence
$\gamma\big(\varphi^{k_i}\big)$ converges to zero at least as fast.
\end{Remark}

\subsubsection{Lagrangian Poincar\'e recurrence}
Consider a compactly supported Hamiltonian diffeomorphism $\varphi$ of
a symplectic manifold $M^{2n}$. The following conjecture was put forth
by the first author and independently by Claude Viterbo around 2010.

\begin{Conjecture}[Lagrangian Poincar\'e Recurrence]
\label{conj:LPR}
For any closed Lagrangian submanifold $L\subset M$ there exists a
sequence of iterations $k_i\to\infty$ such that
$$\varphi^{k_i}(L)\cap L\neq \emptyset.$$ 
Moreover, the density of the sequence $k_i$ is related to a symplectic
capacity of $L$.
\end{Conjecture}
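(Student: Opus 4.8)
The conjecture in full generality is out of reach with present techniques, so the plan is to prove it in the case relevant here: $M=\CP^n$ with $\varphi$ a pseudo-rotation, that is, Theorem~\ref{thm:LPR0} together with its quantitative refinement Theorem~\ref{thm:LPR}. The argument splits into two logically independent pieces. The \emph{dynamical} piece is already in place: Corollary~\ref{cor:gamma} produces a sequence $k_i\to\infty$ with $\gamma(\varphi^{k_i})\to 0$, and Theorem~\ref{thm:gamma} upgrades this to a positive lower density, $\liminf_{K\to\infty}|\{k\le K:\gamma(\varphi^k)<\eps\}|/K\ge C\eps^d$ with $d=n+1-r$ as in Remark~\ref{rmk:gamma}. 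The \emph{symplectic} piece concerns $L$ alone and is the place where the geodesic hypothesis enters: one must show that $L$ cannot be displaced by Hamiltonian diffeomorphisms of arbitrarily small $\gamma$-norm.

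Precisely, I would introduce $e_\gamma(L):=\inf\{\gamma(\psi):\psi\in\Ham(\CP^n),\ \psi(L)\cap L=\emptyset\}$ and prove two things: (a) $e_\gamma(L)>0$ whenever $L$ admits a metric with no contractible closed geodesics; and (b) $e_\gamma(L)$ is comparable, up to a universal constant, to the homological capacity $\chom(L)$ of Theorem~\ref{thm:LPR}, so that the quantitative statement can be phrased through $\chom(L)$ and the whole scheme applies to any compact $L$ with $\chom(L)>0$. Granting (a), Theorem~\ref{thm:LPR0} is immediate: by Corollary~\ref{cor:gamma} choose $k_i\to\infty$ with $\gamma(\varphi^{k_i})<e_\gamma(L)$; then $\varphi^{k_i}(L)\cap L\ne\emptyset$ by definition of $e_\gamma(L)$. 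The quantitative Theorem~\ref{thm:LPR} follows the same way by feeding $\eps<e_\gamma(L)$ (equivalently $\eps$ of order $\chom(L)$) into Theorem~\ref{thm:gamma}: the lower density of the return times $\{k:\varphi^k(L)\cap L\ne\emptyset\}$ is then at least $C\chom(L)^d$.

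The main obstacle is (a), the positivity of $e_\gamma(L)$ under the geodesic hypothesis. I would first establish a purely symplectic inequality: if $\psi(L)\cap L=\emptyset$ then $\gamma(\psi)$ is bounded below by a Floer-theoretic ``width'' of $L$. This is an energy--capacity argument tailored to $\CP^n$: using \eqref{eq:H^inv} and \eqref{eq:gamma(H)} to express $\gamma$ through $\s_{[M]}-\s_{[\pt]}$, a Hamiltonian with $\gamma$-norm below the width would make the Lagrangian Floer continuation map $\HF(L,\psi(L))\to\HF(L,L)$ an isomorphism, while $\HF(L,\psi(L))=0$ by disjointness, a contradiction; here one must handle monotone/rational Lagrangian Floer theory on $\CP^n$ carefully (disk and sphere bubbling, Novikov coefficients, the minimal Chern number $n+1$). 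Second, to see that this width is positive I would pass to a Weinstein neighborhood of $L$, identified with a neighborhood of the zero section in $T^*L$, and invoke the known relation between spectral invariants in cotangent bundles and the length spectrum: the absence of contractible closed geodesics is precisely what keeps the relevant spectral gap from collapsing, producing a lower bound on the width --- hence on $e_\gamma(L)$ and $\chom(L)$ --- proportional to the length of the shortest closed geodesic of the chosen metric. Transplanting these cotangent-bundle estimates into $\CP^n$ and controlling the error from the embedding of the tubular neighborhood is where the real work of the proof concentrates.
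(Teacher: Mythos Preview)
Your overall strategy matches the paper's: deduce recurrence from Theorem~\ref{thm:gamma} (resp.\ Corollary~\ref{cor:gamma}) together with a lower bound of the form $\gamma(\psi)\ge e_\gamma(L)$ whenever $\psi$ displaces $L$. The difference is in how the ``symplectic piece'' is packaged. The paper does not go through Lagrangian Floer theory in $\CP^n$ at all. It works purely with the Hamiltonian homological capacity $\chom$: the inequality you call ``$e_\gamma(L)\ge$ width'' is simply the standard energy--capacity inequality \eqref{eq:chom-gamma}, namely $\chom(U)\le\gamma(\psi)$ whenever $\psi(U)\cap U=\emptyset$, applied to shrinking neighborhoods $U\supset L$. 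The positivity $\chom(L)>0$ under the geodesic hypothesis is then treated as a known fact (Example~\ref{lem:chom}, proved by a Weinstein-neighborhood argument as in \cite{Us:BD}), not reproved. With this in hand the proof of Theorem~\ref{thm:LPR} is two lines: set $\eps=\chom(L)$; for every $k$ with $\gamma(\varphi^k)<\eps$ and every $U\supset L$ one has $\chom(U)\ge\eps>\gamma(\varphi^k)$, hence $\varphi^k(U)\cap U\ne\emptyset$, hence $\varphi^k(L)\cap L\ne\emptyset$; now apply Theorem~\ref{thm:gamma}.

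Your proposed route through $\HF(L,\psi(L))\to\HF(L,L)$ in $\CP^n$ is an unnecessary detour and, as stated, has a gap: for a displaceable Lagrangian $L\subset\CP^n$ the ambient Lagrangian Floer homology $\HF(L,L)$ may already vanish, so the ``contradiction'' with $\HF(L,\psi(L))=0$ is no contradiction at all. The geodesic hypothesis does not buy you non-vanishing of ambient Lagrangian Floer homology; it only buys positivity of the local (Weinstein-neighborhood) capacity, which is exactly what $\chom(L)>0$ records. So your second step, the cotangent-bundle estimate transplanted via a Weinstein neighborhood, is the right mechanism, and once you have it the first step via $\HF(L,L)$ becomes redundant. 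In short: drop the Lagrangian Floer argument in $\CP^n$, define the width directly as $\chom(L)$, cite \eqref{eq:chom-gamma}, and you recover the paper's proof.
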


The requirement that $\varphi$ is Hamiltonian is essential: the
conjecture fails for symplectomorphisms of $\T^2$, e.g., for an
irrational shift. Furthermore, the conjecture is most interesting when
$L$ is ``small''. When it is not, e.g., if $L$ is not displaceable,
the assertion is often obvious.  In dimension two (i.e., for $n=1$),
the conjecture readily follows from the standard Poincar\'e recurrence
theorem when $L$ bounds and the observation that otherwise $L$ is not
displaceable by a Hamiltonian diffeomorphism. On the other hand, to
the best of our knowledge, beyond $n=1$ the question has been
completely open prior to now. Surprisingly, the conjecture is not
straightforward to prove even for a given Hamiltonian diffeomorphism
$\varphi$ unless, of course, it is periodic, i.e., $\varphi^k=\id$ for
some $k$. The difficulty is present already when $\varphi$ is as
simple as an irrational rotation of $\CP^n$, i.e., the Hamiltonian
diffeomorphisms generated by a quadratic Hamiltonian.

It is also worth pointing out that it is sufficient to prove the
existence of one iteration $k=k(\varphi)>1$, the first return time,
such that $\varphi^k(L)\cap L\neq 0$ for every $\varphi$ or at least
for the iterates of a fixed map. Then the existence of infinitely many
such iterates will follow by replacing $\varphi$ by $\varphi^k$ and
repeating the process.

To state our main result on Lagrangian Poincar\'e recurrence, we need
to recall several definitions. Let $U$ be an open subset of a closed,
rational, weakly monotone symplectic manifold $M$. The
\emph{homological capacity} of $U$ is defined as
\begin{equation}
\label{eq:c-hom}
\chom(U)=\sup_F\gamma(\varphi_F),
\end{equation}
where $F$ ranges through all Hamiltonians $F$ supported in $S^1\times U$; see,
e.g., \cite{Sc, Us:ineq, Vi:gen} and references therein. This function
of $U$ has all the expected properties of a symplectic capacity and it
is a standard fact that
\begin{equation}
\label{eq:chom-gamma}
\chom(U)\leq \gamma(\varphi) \textrm{ when } \varphi(U)\cap U
=\emptyset;
\end{equation}
see, e.g., \cite[Prop.\ 3.1]{Us:ineq}. (In applications, sometimes it
is convenient to replace $\gamma(\varphi_F)$ in \eqref{eq:c-hom} by
$\s_{[M]}(F)$; the resulting capacity has the same properties as the
one defined above; see \cite{Gi:We}.) We extend $\chom$ to closed
subsets $L$ of $M$ (for instance, to Lagrangian submanifolds) by
setting
$$
\chom(L)=\inf_U\chom(U),
$$
where the infimum is taken over all open sets $U\supset L$. Note that
$\chom(L)\leq \lambda$; see Remark \ref{rmk:gamma} and the references
therein.

\begin{Example}
\label{lem:chom}
Assume that $L\subset M$ is a closed Lagrangian admitting a Riemannian
metric without contractible closed geodesics. Then
\begin{equation}
\label{eq:chom}
\chom(L)>0.
\end{equation}
The proof of this well-known fact, which we omit here, is quite
standard and implicitly contained in, e.g., the proof of \cite[Thm.\
8.2]{Us:BD}. Moreover, the same is true for certain classes of
coisotropic manifolds (contact type, stable or with totally geodesic
characteristic foliation); see \cite{Gi:coiso, Us:BD}.  Conjecturally,
\eqref{eq:chom} holds for all closed Lagrangians, but this, to the
best of our understanding, is unknown.
\end{Example}

Now we are in a position to state and prove the key result of this
section.

\begin{Theorem}
\label{thm:LPR}
Let $\varphi$ be a pseudo-rotation of $\CP^n$ and let $L\subset \CP^n$
be a closed subset (e.g., a Lagrangian submanifold). Then there exists
a constant $C>0$ and a non-negative integer $d\leq n$, both depending
only on $\vDelta(\varphi)$ but not $L$, such that
$$
\liminf_{k\to\infty} \frac{|\{\ell\leq k \mid \varphi^\ell(L)\cap L\neq
  \emptyset\}|}{k}\geq C\cdot\chom(L)^d.
$$
In particular, the limit inferior is positive when $\chom(L)>0$.
\end{Theorem}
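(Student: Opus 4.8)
The plan is to deduce Theorem \ref{thm:LPR} from Theorem \ref{thm:gamma} by the standard displacement-energy-type argument, using the homological capacity as the bridge. First I would observe that it suffices to prove the statement for open neighborhoods $U\supset L$ and then pass to the infimum: indeed, if for each open $U\supset L$ we had a lower bound for the density of $\ell$ with $\varphi^\ell(U)\cap U\neq\emptyset$ in terms of $\chom(U)$, then since $\{\varphi^\ell(L)\cap L\neq\emptyset\}\supseteq\{\varphi^\ell(U)\cap U\neq\emptyset\}$ is false in the wrong direction, I should instead argue via the contrapositive: if $\varphi^\ell(L)\cap L=\emptyset$ then, since $L$ is closed and $\varphi^\ell$ continuous, also $\varphi^\ell(U)\cap U=\emptyset$ for some open $U\supset L$ depending on $\ell$. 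This is the slightly delicate point, so let me instead organize the proof the other way around.

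The cleaner route: apply \eqref{eq:chom-gamma}. Fix any $\eps$ with $0<\eps\le\lambda$ and $\eps<\chom(L)$. By definition of $\chom(L)$ as an infimum over open $U\supset L$, and since $\chom(L)>\eps$, every open $U\supset L$ satisfies $\chom(U)\ge\chom(L)>\eps$. Now suppose $\varphi^\ell(L)\cap L=\emptyset$ for some $\ell$. Because $L$ is compact (closed in the compact manifold $\CP^n$) and $\varphi^\ell$ is a homeomorphism, there is an open neighborhood $U\supset L$ with $\varphi^\ell(U)\cap U=\emptyset$; by \eqref{eq:chom-gamma} applied to $\varphi^\ell$ this gives $\gamma(\varphi^\ell)\ge\chom(U)\ge\chom(L)>\eps$, hence $\gamma(\varphi^\ell)\ge\chom(L)$. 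Contrapositively, $\gamma(\varphi^\ell)<\chom(L)$ forces $\varphi^\ell(L)\cap L\neq\emptyset$. Therefore
$$
\{\ell\le k\mid \gamma(\varphi^\ell)<\eps\}\subseteq\{\ell\le k\mid \varphi^\ell(L)\cap L\neq\emptyset\}
$$
for every $\eps\le\chom(L)$ (with also $\eps\le\lambda$, which is automatic since $\chom(L)\le\lambda$).

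Finally I would combine this inclusion with Theorem \ref{thm:gamma}. Taking $\eps\nearrow\chom(L)$ (or simply $\eps=\chom(L)$ when $\chom(L)\le\lambda$, which always holds by Remark \ref{rmk:gamma}), and noting that if $\chom(L)=0$ the claimed bound is vacuous, we get
$$
\liminf_{k\to\infty}\frac{|\{\ell\le k\mid \varphi^\ell(L)\cap L\neq\emptyset\}|}{k}
\ge\liminf_{k\to\infty}\frac{|\{\ell\le k\mid \gamma(\varphi^\ell)<\eps\}|}{k}
\ge C\eps^d,
$$
and letting $\eps\to\chom(L)$ yields the bound $C\cdot\chom(L)^d$ with the same $C$ and $d\le n$ produced by Theorem \ref{thm:gamma} (which depend only on $\vDelta(\varphi)$, not on $L$). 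The positivity assertion when $\chom(L)>0$ is then immediate, and Theorem \ref{thm:LPR0} follows by invoking Example \ref{lem:chom} to see that a Lagrangian with a metric without contractible closed geodesics has $\chom(L)>0$.

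The only real subtlety, and the step I would be most careful about, is the passage from ``$\varphi^\ell(L)\cap L=\emptyset$'' to ``$\varphi^\ell(U)\cap U=\emptyset$ for some open $U\supset L$'': this uses compactness of $L$ together with the fact that $\varphi^\ell(L)$ and $L$ are disjoint compact sets, hence at positive distance, so a small enough tubular (or metric) neighborhood of $L$ works. Everything else is a formal chaining of the capacity inequality \eqref{eq:chom-gamma}, the monotonicity of $\chom$ under inclusion of open sets, and the density estimate of Theorem \ref{thm:gamma}.
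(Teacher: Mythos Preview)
Your proof is correct and follows essentially the same route as the paper's: use the displacement--capacity inequality \eqref{eq:chom-gamma} to show that $\gamma(\varphi^\ell)<\chom(L)$ forces $\varphi^\ell(L)\cap L\neq\emptyset$, then invoke Theorem \ref{thm:gamma} with $\eps=\chom(L)$. The paper sets $\eps=\chom(L)$ directly rather than taking a limit (your parenthetical observation that this works is correct, so the limiting step is unnecessary), and it phrases the compactness step as ``$\varphi^k(U)\cap U\neq\emptyset$ for all $U\supset L$ implies $\varphi^k(L)\cap L\neq\emptyset$'' rather than via the contrapositive, but these are the same argument.
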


As a consequence, we obtain Theorem \ref{thm:LPR0} from the
introduction:

\begin{Corollary}
\label{cor:LPR}
In the setting of Theorem \ref{thm:LPR}, assume that $\chom(L)>0$
(e.g., $L$ is as in Example \ref{lem:chom}). Then
$\varphi^{k_i}(L)\cap L\neq \emptyset$ for some sequence
$k_i\to\infty$.
\end{Corollary}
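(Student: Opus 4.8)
The plan is to derive Theorem~\ref{thm:LPR} directly from the quantitative $\gamma$-norm convergence of Theorem~\ref{thm:gamma}, combined with the displacement--capacity inequality \eqref{eq:chom-gamma}, and then to read off Corollary~\ref{cor:LPR}. First I would dispose of the case $\chom(L)=0$: the claimed bound then reads $\liminf\geq 0$ and is vacuous, once one observes that the exponent $d$ is positive for any pseudo-rotation. Indeed, if $d=0$ then the argument in the proof of Theorem~\ref{thm:gamma} (in which $0\in B(\eps)$ for every $\eps$) shows that some iterate $\varphi^{m}$ has $\gamma(\varphi^{m})=0$, hence $\varphi^{m}=\id$, which is impossible for a map with only finitely many periodic points. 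So assume $\eps:=\chom(L)>0$; recalling from Remark~\ref{rmk:gamma} (via \eqref{eq:H^inv}, \eqref{eq:gamma(H)} and \eqref{eq:lambda-bound}) that $\chom(L)\leq\lambda$, we have $0<\eps\leq\lambda$, so Theorem~\ref{thm:gamma} is applicable with exactly this $\eps$ and provides $C>0$ and $d\leq n$, depending only on $\vDelta(\varphi)$, with
$$
\liminf_{k\to\infty}\frac{|\{\ell\leq k\mid \gamma(\varphi^\ell)<\eps\}|}{k}\ \geq\ C\eps^{d}\ =\ C\cdot\chom(L)^{d}.
$$

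Next I would show that every $\ell$ counted on the left also contributes on the left-hand side in Theorem~\ref{thm:LPR}, i.e.\ that $\gamma(\varphi^\ell)<\chom(L)$ forces $\varphi^\ell(L)\cap L\neq\emptyset$. This is the contrapositive of \eqref{eq:chom-gamma}: if $\varphi^\ell(L)\cap L=\emptyset$, then compactness of $L$ gives an open set $U\supset L$ with $\varphi^\ell(U)\cap U=\emptyset$, whence $\chom(L)\leq\chom(U)\leq\gamma(\varphi^\ell)$ by the definition of $\chom(L)$ as the infimum of $\chom(U)$ over open $U\supset L$ and by \eqref{eq:chom-gamma}. Therefore
$$
\{\ell\leq k\mid \gamma(\varphi^\ell)<\eps\}\ \subseteq\ \{\ell\leq k\mid \varphi^\ell(L)\cap L\neq\emptyset\},
$$
and feeding this inclusion into the displayed inequality yields Theorem~\ref{thm:LPR}. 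Corollary~\ref{cor:LPR} is then immediate: when $\chom(L)>0$ the quantity $C\cdot\chom(L)^{d}$ is strictly positive, so the set $\{\ell\mid\varphi^\ell(L)\cap L\neq\emptyset\}$ has positive lower density, in particular is infinite, and so contains a sequence $k_i\to\infty$.

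The real content of the result sits entirely in Theorem~\ref{thm:gamma}, which has already been proved; the two steps above are elementary, so I do not anticipate a genuine obstacle. The only points that need a moment's care are the compactness passage from the closed set $L$ to an open neighborhood $U$ in the displacement step, and the verification that $\chom(L)\leq\lambda$ --- this last is precisely what allows Theorem~\ref{thm:gamma} to be invoked at the single value $\eps=\chom(L)$, with one pair of constants $C,d$, rather than only in an asymptotic regime.
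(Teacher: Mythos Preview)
Your proposal is correct and follows essentially the same approach as the paper: set $\eps=\chom(L)$, apply Theorem~\ref{thm:gamma}, and use the displacement inequality \eqref{eq:chom-gamma} to conclude that $\gamma(\varphi^\ell)<\chom(L)$ forces $\varphi^\ell(L)\cap L\neq\emptyset$; the corollary then drops out from positivity of $C\cdot\chom(L)^d$. The only cosmetic differences are that the paper argues the displacement step directly (for all $U\supset L$) rather than by contrapositive, and simply declares the case $\chom(L)=0$ ``void'' without your aside on $d>0$ --- which is unneeded for the corollary anyway, since there $\chom(L)>0$ is assumed.
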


\begin{Remark}[Multiplicity of Intersections] 
\label{rmk:mult-inter}
In the general framework of the Lagrangian Poincar\'e recurrence
conjecture, we see no reason to expect a lower bound on the number of
intersections of $\varphi^k(L)$ and $L$ -- after all this is a
dynamics rather than a symplectic topological question. However, in
the setting considered here where the recurrence is a consequence of
the $\gamma$-convergence, the situation is different. Namely, assume
for the sake of simplicity that $\varphi^k(L)$ and $L$ are transverse
for all $k$, which is a generic condition on $L$. Then, in Corollary
\ref{cor:LPR}, the number of intersections is bounded from below by
$\dim\H(L)$, provided that one can replace the upper bound on the
Hofer norm in Chekanov's theorem, \cite{Ch:Lagr}, by an upper bound on
the $\gamma$-norm. Some results in this direction have been recently
announced in \cite{KS}.
\end{Remark}

\begin{proof}[Proof of Theorem \ref{thm:LPR}]
  Without loss of generality we can assume that $\chom(L)>0$ --
  otherwise the assertion is void. Set $\eps=\chom(L)$; then
  $\eps\leq \lambda$. Consider the set
$$
K=\{k\mid \gamma(\varphi^k)<\eps\} \subset \N.
$$
Let $U$ be a neighborhood of $L$. Clearly,
$\chom(U)\geq \eps$ and, by \eqref{eq:chom-gamma}, 
$$
\varphi^k(U)\cap U\neq \emptyset
$$
for every $k\in K$. Since this holds for all $U\supset L$, we
have
 $$
\varphi^k(L)\cap L\neq \emptyset
$$
for all $k\in K$ and the result now follows from Theorem
\ref{thm:gamma}.
\end{proof}

\begin{Remark}[Return Frequency for Small Balls] 
  Let $U$ be a small ball of radius $\delta>0$ in $\CP^n$. Then
  $\chom(U)\geq\const\cdot \delta^2$. Arguing as in the proof of
  Theorem \ref{thm:LPR}, it is easy to see that the return frequency
  for $\varphi$ and $U$ is bounded from below by $C\delta^{2d}$:
\begin{equation}
\label{eq:freq}
\lim_{k\to\infty} \frac{|\{\ell\leq k \mid 
\varphi^\ell(U)\cap U\neq \emptyset\}|}{k}\geq
C\delta^{2d}.
\end{equation}
When $d=n$, \eqref{eq:freq} is exactly the lower bound guaranteed by
the standard Poincar\'e recurrence theorem. In general, $d=n+1-r$,
where $r$ is the number of linearly independent resonance relations
which the mean indices $\hmu(x_i)$ satisfy. Thus \eqref{eq:freq}
provides a stronger lower bound when $r\geq 2$.
\end{Remark}

\begin{Remark}
\label{rmk:packing}
The Lagrangian Poincar\'e recurrence can also be thought of as a
consequence of a hypothetical obstruction to Lagrangian packing in the
same way as the standard Poincar\'e recurrence can be viewed as coming
from the volume obstruction to ball packing. For instance, one might
conjecture that for a compact symplectic manifold $M$ (possibly with
boundary) and a closed Lagrangian $L\subset M$, one can embed into $M$
only a finite number of disjoint Lagrangian submanifolds Hamiltonian
isotopic to $L$. We do not have any counterexamples to this more
general conjecture; nor are we aware of any results in this direction.
\end{Remark}

\begin{Remark}
  It is interesting to compare the Lagrangian Poincar\'e recurrence
  conjecture with Arnold's Legendrian chord conjecture. While at first
  glance the two questions appear to be similar, there are some
  fundamental differences. For instance, in many cases the first
  return time in the chord conjecture is independent of the Legendrian
  submanifold and completely determined by the Reeb flow (see, e.g.,
  \cite{Mo}), but the first return time in the Lagrangian Poincar\'e
  recurrence must, clearly, depend on $L$. (Ultimately, the reason is
  that small, localized, Legendrian submanifolds have localized chords
  unrelated to global dynamics. In other words, the chord conjecture
  is a symplectic topological fact while Lagrangian Poincar\'e
  recurrence relates to dynamics.) On a more technical level,
  Legendrian chords can be treated in the framework of a Floer- or
  SFT-type homology theory (see, e.g., \cite{Ch, EGH}), but no such 
  theory for the Lagrangian Poincar\'e recurrence is known.
\end{Remark}

\subsection{$C^0$-rigidity}
\label{sec:PO}
It turns out that under suitable additional conditions on the mean
index vector $\vDelta$, $\gamma$-convergence in Corollary
\ref{cor:gamma} can be replaced by $C^0$-convergence. For
pseudo-rotations in dimension two this is the $C^0$-rigidity
established in \cite{Br}, and our goal is to extend this result to
higher dimensions. Let us begin with several preliminary observations.

Consider a compact abelian group $\Gamma$ equipped with some
bi-invariant metric. As above we denote the norm of
$\vtheta\in\Gamma$, i.e., the distance to the origin, by $\|\vtheta\|$.

\begin{Definition}
\label{def:exp-L}
A ``vector'' $\vtheta\in\Gamma$ is \emph{exponentially Liouville} if
for every constant $c>0$ there exists $k\in\N$ such that
$\|k\vtheta\|<e^{-c k}$ or, equivalently, a sequence $k_i\to\infty$
satisfying the condition
\begin{equation}
\label{eq:exp-L}
\|k_i\vtheta\|<e^{-c k_i}.
\end{equation}
\end{Definition}

It is clear that when $\Gamma$ is a subgroup of $\Gamma'$, a vector
$\vtheta\in\Gamma$ is exponentially Liouville in $\Gamma$ if and only
if it is exponentially Liouville in $\Gamma'$.  We will apply
Definition \ref{def:exp-L} to the mean index vector $\vDelta$; see
\eqref{eq:vDelta}. But first we need to show that, even though
exponentially Liouville vectors form a zero measure set, they are
generic in the topological sense. The following observation is quite
standard:

\begin{Proposition}
\label{prop:exp-L}
Exponentially Liouville elements of a compact abelian group $\Gamma$
form a residual set $L$ in $\Gamma$,
i.e., $L$ is a countable intersection of open and dense sets.
\end{Proposition}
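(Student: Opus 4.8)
The plan is to rewrite the set $L$ of exponentially Liouville elements of $\Gamma$ as a countable intersection of open dense subsets and then invoke the Baire category theorem. First I would observe that in \eqref{eq:exp-L} it suffices to let the constant $c$ run over the positive integers: if for every $j\in\N$ there is some $k=k(j)\in\N$ with $\|k\vtheta\|<e^{-jk}$, then for an arbitrary $c>0$ one chooses an integer $j\ge c$ and the same $k$ works, since $e^{-jk}\le e^{-ck}$. Consequently
\[
L=\bigcap_{j\in\N}U_j,\qquad
U_j:=\bigcup_{k\in\N}\bigl\{\,\vtheta\in\Gamma\;\big|\;\|k\vtheta\|<e^{-jk}\,\bigr\}.
\]
Each set in the inner union is the preimage of the open $e^{-jk}$-ball about the origin under the continuous map $\vtheta\mapsto k\vtheta$, hence is open; so every $U_j$ is open.

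The only point with real content is the density of each $U_j$. Here I would use that every torsion element of $\Gamma$ automatically lies in $L$: if $N\vtheta=0$ with $N\in\N$, then $\|N\vtheta\|=0<e^{-jN}$ for all $j$, so $\vtheta\in U_j$ for every $j$. Hence it is enough to know that torsion elements are dense in $\Gamma$, and for the situation relevant to this paper this is immediate. Indeed, in the application $\Gamma$ is the closed subgroup of $\T^{n+1}$ topologically generated by the mean index vector $\vDelta$, so $\Gamma\cong\T^d\times F$ for some $d$ and some finite abelian group $F$, and $(\Q/\Z)^d\times F$ is dense in $\T^d\times F$. (The same reasoning applies to any compact abelian Lie group.) Therefore each $U_j$ is open and dense, and since $\Gamma$ is a compact, hence complete, metric space, the Baire category theorem gives that $L=\bigcap_{j}U_j$ is residual, in particular dense and nonempty.

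As this indicates, the statement is soft once it has been reformulated in terms of the $U_j$'s, so there is no serious obstacle; the step to watch is the density of $U_j$, which rests entirely on density of the torsion subgroup of $\Gamma$. That hypothesis is automatic for closed subgroups of $\T^{n+1}$ — the only case used here, and the setting in which the fact is stated in the introduction — but it can fail for compact abelian groups that are not Lie groups (for instance, no nonzero element of the $p$-adic integers with their usual metric is exponentially Liouville), so I would either phrase Proposition~\ref{prop:exp-L} for compact abelian Lie groups or simply restrict attention to closed subgroups of $\T^{n+1}$.
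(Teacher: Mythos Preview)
Your proof is correct and takes essentially the same route as the paper's: both write $L=\bigcap_c U_c$ with each $U_c$ open, and obtain density from the torsion points (the paper's sets $G_k\subset\T^m$ are exactly the $k$-torsion, and the paper's reduction to the identity component being a torus is the same step as your appeal to $\Gamma\cong\T^d\times F$). Your remark about generality is on target: the paper's proof also tacitly uses that $\Gamma$ is a compact abelian \emph{Lie} group when it asserts that the identity component is a torus, so your suggested restriction is exactly what is needed --- and is all that the application requires.
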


\begin{proof}
  Note that it is enough to prove the proposition for the connected
  component of the identity in $\Gamma$. Thus we can assume that
  $\Gamma$ is a torus $\T^m=\R^m/\Z^m$. (When $\Gamma$ is finite, the
  assertion is obvious.)

  Following the proof of \cite[Lemma 10]{Br}, consider the collection
  $G_k\subset \T^m$ of points of the form $\vp/k$, where $\vp$ is an
  integer vector and $k\in\N$. This collection is $1/k$-dense in
  $\T^m$ in the obvious sense. Set
$$
U_{k,c}=\bigcup_{g\in G_k}\big\{\vtheta\in \Gamma\,\big|\,
\|k\vtheta-g\|<e^{-c k}\big\},
$$
where $c$ and $k$ are positive integers. This is an open set, which is
also at least $1/k$-dense. (For instance, $G_k\subset U_{k,c}$.) Thus
the union
$$
U_c=\bigcup_{k\in\N} U_{k,c}
$$ is open and dense, and the intersection
$$
L=\bigcap_{c\in \N} U_c
$$
comprising the set of exponentially Liouville elements of $\Gamma$ is
residual.
\end{proof}

One reason we need to consider here a more general group than the
torus $\T^{n+1}$ is that the components of the mean index vector
$\vDelta$ of a pseudo-rotation $\varphi$ must satisfy mean index
resonance relations (see Theorem \ref{thm:index-RR} and \cite{GK}) and
thus $\vDelta$ cannot be a generic vector in $\T^{n+1}$, i.e.,
$\vDelta$ topologically generates a proper subgroup. (If
\eqref{eq:sum-RR} is indeed a universal resonance relation this
subgroup is contained in the anti-diagonal subtorus $\T^n$ in
$\T^{n+1}$.) Furthermore, one might be interested in further
restricting the class of pseudo-rotations and hence of vectors
$\vDelta$.

\begin{Theorem}
\label{thm:exp-L}
Let $\varphi=\varphi_H$ be a pseudo-rotation of $\CP^n$ with
exponentially Liouville mean index vector $\vDelta$. Then there exists
a sequence $k_i\to\infty$ such that
$$
\varphi^{k_i}\stackrel {C^0}{\longrightarrow}
\id .
$$
\end{Theorem}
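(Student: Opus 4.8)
The plan is to upgrade the $\gamma$-norm convergence of Corollary \ref{cor:gamma} to $C^0$-convergence, using the exponential rate built into Definition \ref{def:exp-L} to overcome the fact that the Lipschitz constants of the iterates $\varphi^{k}$ may grow exponentially in $k$. The first ingredient is the estimate recorded in Remark \ref{rmk:Delta-gamma}: there is a constant $c_1>0$ depending only on $\vDelta=\vDelta(\varphi)$ such that $\gamma(\varphi^{k})\leq c_1\,\|k\vDelta\|$ for all $k\in\N$. Since $\vDelta$ is exponentially Liouville, this yields: for every prescribed $c>0$ there is a sequence $k_i\to\infty$ along which $\gamma(\varphi^{k_i})<c_1\,e^{-c k_i}$.

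Next I would establish a quantitative comparison between the $C^0$-distance and the $\gamma$-norm. Fix a Riemannian metric on $\CP^n$ with induced distance $\operatorname{dist}$, and let $\ell=\max\{1,\operatorname{Lip}(\varphi)\}\geq 1$, so that $\operatorname{Lip}(\varphi^{k})\leq\ell^{k}$ and $1+\ell^{k}\leq 2\ell^{k}$. Suppose $\operatorname{dist}\big(x_0,\varphi^{k}(x_0)\big)\geq\eps$ for some point $x_0$, and set $\rho=\eps/(2(1+\ell^{k}))$. Then $\varphi^{k}\big(B(x_0,\rho)\big)\subseteq B\big(\varphi^{k}(x_0),\ell^{k}\rho\big)$, and the triangle inequality gives $B(x_0,\rho)\cap\varphi^{k}\big(B(x_0,\rho)\big)=\emptyset$. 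Hence, by the energy--capacity inequality \eqref{eq:chom-gamma} together with the standard lower bound $\chom\big(B(x_0,\rho)\big)\geq c_2\,\rho^{2}$ valid for all sufficiently small balls in $\CP^n$ (see the discussion of return frequencies for small balls in Section \ref{sec:PR}; this in turn reduces via Darboux's theorem to the corresponding estimate for standard balls in $\R^{2n}$), we get $\gamma(\varphi^{k})\geq c_2\,\rho^{2}\geq c_3\,\eps^{2}\ell^{-2k}$ for a suitable $c_3>0$. Contrapositively: if $\gamma(\varphi^{k})<c_3\,\eps^{2}\ell^{-2k}$, then $\varphi^{k}$ displaces every point of $\CP^n$ by less than $\eps$.

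Finally I would combine the two steps. Choose $c=2\ln\ell+1$ and let $k_i\to\infty$ be the corresponding sequence with $\gamma(\varphi^{k_i})<c_1\,e^{-c k_i}$, and set $\eps_i=\sqrt{2c_1/c_3}\;e^{-(c-2\ln\ell)k_i/2}$. A direct computation shows $c_1\,e^{-c k_i}<c_3\,\eps_i^{2}\,\ell^{-2k_i}$, so by the previous step $\varphi^{k_i}$ displaces each point by less than $\eps_i$; since $c>2\ln\ell$ we have $\eps_i\to 0$, hence $\varphi^{k_i}\stackrel{C^0}{\longrightarrow}\id$. The triangle-inequality estimate and the bookkeeping of constants are routine. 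The main obstacle — and the point where the exponentially Liouville hypothesis is genuinely needed rather than merely convenient — is the race between the exponential decay of $\gamma(\varphi^{k_i})$ that one can extract from Definition \ref{def:exp-L} and the a priori exponential growth $\ell^{2k}$ of the factor converting a $\gamma$-bound into a $C^0$-bound: no fixed exponential rate would suffice for an arbitrary $\varphi$, whereas ``exponentially Liouville'' precisely delivers sequences along which $\gamma(\varphi^{k_i})$ decays faster than any prescribed exponential.
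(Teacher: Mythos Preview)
Your argument is correct and follows the alternative route that the paper itself flags in the remark immediately after its proof (attributed there to Seyfaddini): combine the inequality $\gamma(\varphi^k)\leq c_1\|k\vDelta\|$ from Remark~\ref{rmk:Delta-gamma} with a ``H\"older type'' comparison between $C^0$-displacement and $\gamma$-norm mediated by the Lipschitz constant of $\varphi^k$. Your implementation of that comparison via the energy--capacity inequality~\eqref{eq:chom-gamma} and the quadratic lower bound on $\chom$ of small balls is clean; the only minor point to make explicit is that the constant $c_2$ in $\chom(B(x_0,\rho))\geq c_2\rho^2$ is uniform in $x_0$ by compactness of $\CP^n$, and that one may need to discard finitely many $k_i$ to ensure $\rho_i$ lies below the injectivity radius.

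The paper's own proof is quite different: rather than passing through $\gamma$, it works directly with Floer trajectories. For a generic point $p$ there is a Floer cylinder $u$ through $p$ connecting the action carriers of $[\CP^n]$ and $[\pt]$; the pointwise bound $\|\partial_s u\|_{L^\infty}\leq O(E(u)^{1/4})$ (valid for small energy) is then fed into a Gronwall-type estimate along the flow to obtain $d(p,\varphi^k(p))\leq e^{Ck}O(E(u)^{1/4})$ with $C$ controlled by $\|H\|_{C^2}$, and $E(u)$ is bounded by the action gap, hence by $\|k\vDelta\|$. Your approach is more elementary and black-boxes the Floer theory into the $\gamma$-norm; the paper's direct argument, while longer, yields an explicit exponential rate tied to $\|H\|_{C^2}$ (any $c>8\|H\|_{C^2}$ suffices) and, as the authors note, may be of independent interest for other applications.
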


This is Theorem \ref{thm:exp-L0} from the introduction.

\begin{Remark}
  The sequence $k_i$ is, of course, exactly the sequence of iterations
  such that $\|k_i\vDelta\|<e^{-c k_i}$, where $c$ is completely
  determined by $\| H\|_{C^2}$. For instance, we can take any
  $c>8\| H\|_{C^2}$. Furthermore, the convergence is exponential:
  $\|\varphi^{k_i}\|_{C^0}\leq e^{-a k_i}$ for some $a>0$ which can be
  made arbitrarily large by choosing a large $c$. Finally, note that
  in this theorem we impose no non-degeneracy requirements on~$H$.
\end{Remark}

\begin{proof} The argument closely follows the proof from \cite{Br},
  although we make several shortcuts and use Floer theory instead of
  pseudo-holomorphic curves.
  
  Let us first assume that $\varphi$ is strongly non-degenerate, i.e.,
  all its iterates are non-degenerate. It is easy to see that since
  $\varphi$ is a pseudo-rotation the Floer differential vanishes and
  hence we can identify the Floer homology of $\varphi$ with the Floer
  complex. Let $\bx$ and $\by$ be the capped $k$-periodic orbits
  representing the fundamental class $[\CP^n]$ and the class of the
  point $[\pt]$ in the Floer complex/homology of $\varphi^k$. Denote
  by $\CM(\bx,\by)$ the moduli space of Floer trajectories
  $u\colon S^1_k\times \R \to \CP^n$, where $S^1_k=\R/k\Z$, from $\bx$
  to $\by$. The image $U$ of the evaluation map
$$
\CM(\bx,\by)\to\CP^n, \quad u\mapsto u(0,0)
$$ 
contains an open and dense subset in $\CP^n$. (This is true for any
closed rational symplectic manifold $M$ and any non-degenerate
Hamiltonian when $\bx$ and $\by$ are replaced by action carriers for
$[M]$ and $[\pt]$.) This is an immediate consequence of the standard
fact that for a generic $p\in\CP^n$ the number of $u\in\CM(\bx,\by)$
with $u(0,0)=p$, taken with appropriate signs and viewed as an element
of $\F$, represents the action of $[p]=[\pt]\in\HQ_*(\CP^n)$ on
$\HF_*(\varphi^k)$ and $[\pt]*[\CP^n]=[\pt]$; see
Section~\ref{sec:cap-product}.

Next, recall that
\begin{equation}
\label{eq:velocity-energy}
\big\|\p_s u \big\|_{L^\infty}\leq O\big(E(u)^{1/4}\big),
\end{equation}
where $s$ is the coordinate on $\R$, and the energy $E(u)$ of $u$ is
sufficiently small; see \cite[Sect.\ 1.5]{Sa} or \cite{Br} for a
simple self-contained proof. The upper bound on the right in
\eqref{eq:velocity-energy} is uniform in $k$, and in fact independent
of $k$, and completely determined by the $C^2$-norm of $H$ and the
almost complex structure $J$. (It is essential here that we view the
iterated flow not as $\varphi_H^{kt}$ but as the flow $\varphi_H^t$
with $t\in [0,k]$.)

Denote by $d$ the distance in $\CP^n$. We claim that
\begin{equation}
\label{eq:distance}
d\big(p,\varphi^k(p)\big) \leq e^{Ck} O\big(E(u)^{1/4}\big)
\end{equation}
for every $p\in \CP^n$, where we can take any $C>2\| H\|_{C^2}$,
provided that $E(u)$ is small enough. To prove this, assume first that
$p\in U$. Set $z(t)=u(0,t)$ for some $u$ with $p=u(0,0)$ and
$\zeta(t)=\big(\varphi^t\big)^{-1}\big(z(t)\big)$. Then
$$
\dot{z}(t)=X_H\big(z(t)\big)+ D\varphi^t\big(\dot{\zeta}(t)\big),
$$
and hence
$$
\dot{\zeta}(t)=\big(D\varphi^t\big)^{-1}\big(
\dot{z}(t)-X_H(z(t))\big).
$$
It is easy to see that 
\begin{equation}
\label{eq:exp}
\big\|\big(D\varphi^t\big)^{-1}\big\|\leq e^{C_0t},
\end{equation}
where we can take $C_0=\|H\|_{C^2}$. Moreover, from the Floer equation
and \eqref{eq:velocity-energy}, we infer that
$$
\big\|X_H\big(z(t)\big)-\dot{z}(t)\big\|\leq O\big(E(u)^{1/4}\big),
$$
for all $t\in [0,k]$. Therefore,
$$
\big\| \dot{\zeta}(t)\big\|\leq e^{C_0t} O\big(E(u)^{1/4}\big)
$$
 and thus
$$
d\big(\zeta(0),\zeta(k)\big)\leq e^{C_0 k} O\big(E(u)^{1/4}\big).
$$
Clearly, $\zeta(0)=p$ and, since $z$ is a loop,
$\zeta(k)=\varphi^{-k}(p)$. Applying $\varphi^k$ to $\zeta(0)$ and
$\zeta(k)$ and using \eqref{eq:exp} again, we obtain
\eqref{eq:distance} with $C>2C_0$. Finally, the upper bound is uniform
in $p$ and $U$ is dense in $\CP^n$. Therefore, \eqref{eq:distance}
holds on $\CP^n$.

Next, let us find an upper bound on $E(u)$. This is where the
requirement that $\vDelta$ is exponentially Liouville enters the proof
and the argument is quite similar to the end of the proof of Theorem
\ref{thm:gamma}.  As in that proof, it is convenient to rescale the
symplectic structure on $\CP^n$ so that $[\omega]=2c_1(T\CP^n)$ and
normalize $H$ to ensure that $\CS(H)=\CSI(\varphi)$. Then, in
particular,
$$
\CA_{H^{\nat k}}(\bx)=\hmu(\bx) \textrm{ and } 
\CA_{H^{\nat k}}(\by)=\hmu(\by).
$$

Assume that $k=k_i$ for the sequence $k_i$ from Definition
\ref{def:exp-L} with $\vtheta=\vDelta$ and a sufficiently large $c$ to
be specified later. Then, by \eqref{eq:exp-L}, the spectrum
$\CS\big(H^{\nat k}\big)=\CSI(\varphi^k)$ is located in a neighborhood
of $2(n+1)\Z$ of size $e^{-ck}$. Arguing exactly as in the proof of
Theorem \ref{thm:gamma}, it is easy to show that $\hmu(\bx)$ and
$\hmu(\by)$ both lie in the cluster centered at 0. Thus

$$
|\hmu(x)|\leq e^{-ck}\textrm{ and } |\hmu(y)|\leq e^{-ck}
$$
and
$$
E(u)=\CA_{H^{\nat k}}(\bx)-\CA_{H^{\nat
    k}}(\by)=\hmu(\bx)-\hmu(\by)\leq 2e^{-ck}.
$$

Set $c>4C$. (For instance, $C=2.1\|H\|_{C^2}$ and
$c=8.5\| H\|_{C^2}$.)  Then, by \eqref{eq:distance} for $k=k_i$,
we have
$$
d\big(p,\varphi^{k_i}(p)\big) \leq O\big(e^{(C-c/4)k_i}\big)\to 0
$$
as $k_i\to\infty$. Thus
$$
\|\varphi^{k_i}\|_{C^0}\to 0,
$$
which proves the theorem when $\varphi$ is strongly non-degenerate.

Dealing with the degenerate case, consider a $C^2$-small
non-degenerate perturbation $F$ of $H^{\nat k}$. As above, let $\bx$
and $\by$ be the action carriers for $[\CP^n]$ and, respectively,
$[\pt]$.

\begin{Lemma}
\label{lemma:deg-energy}
Assume that $F$ is sufficiently $C^2$-close to $H^{\nat k}$. Then, for
a generic point $p\in\CP^n$, there exists a solution $u$ of the Floer
equation for $F$ with $u(0,0)=p$ such that
\begin{equation}
\label{eq:deg-energy}
E(u)\leq 2 \big( \CA_{H^{\nat k}}(\bx)- \CA_{H^{\nat k}}(\by)\big).
\end{equation}
\end{Lemma}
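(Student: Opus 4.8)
The plan is to obtain Lemma \ref{lemma:deg-energy} by running the non-degenerate argument given above for $F$ in place of $\varphi^k$, and then controlling the resulting energy by a spectral-invariant estimate that survives the perturbation. Fix a generic almost complex structure so that all moduli spaces below are regular, and let $F$ be a $C^2$-small non-degenerate perturbation of $H^{\nat k}$. Since $\CP^n$ is rational, the infimum defining $\s_{[\CP^n]}(F)$ is attained, so I may pick a cycle $\sigma\in \CF_*(F)$ representing the fundamental class $[\CP^n]$ with $\s_\sigma(F)=\s_{[\CP^n]}(F)$; by \eqref{eq:cycle-action} every capped orbit $\bz$ occurring in $\sigma$ then satisfies $\CA_F(\bz)\le \s_{[\CP^n]}(F)$.

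Next I would invoke the chain-level cap product from Section \ref{sec:cap-product}. For a generic point $p\in \CP^n$ the one-point pseudo-cycle $\{p\}$ represents $[\pt]\in \HQ_*(\CP^n)$ and is transverse to the evaluation maps of the relevant moduli spaces, so the operator $\Phi_{\{p\}}(\bz)=\sum_{\by} m(\bz,\by;\{p\})\by$ — where $m(\bz,\by;\{p\})$ counts the isolated Floer trajectories $u$ for $F$ from $\bz$ to $\by$ with $u(0,0)=p$ — is a chain map inducing the cap product $\Phi_{[\pt]}$ by $[\pt]$ on $\HF_*(F)$ (cf.\ \cite{LO}). Applying it to $\sigma$ produces a cycle $\Phi_{\{p\}}(\sigma)$ representing $[\pt]*[\CP^n]=[\pt]\neq 0$; in particular $\Phi_{\{p\}}(\sigma)$ is a non-zero chain.

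Then I would extract the trajectory together with its energy bound. Let $\by^*$ be a capped orbit of maximal action among those appearing with non-zero coefficient in $\Phi_{\{p\}}(\sigma)$; since this chain represents $[\pt]$, we have $\CA_F(\by^*)=\s_{\Phi_{\{p\}}(\sigma)}(F)\ge \s_{[\pt]}(F)$. As the coefficient of $\by^*$ is non-zero, there is a capped orbit $\bz$ occurring in $\sigma$ and a Floer trajectory $u$ for $F$ with $u(0,0)=p$, asymptotic to $\bz$ at $-\infty$ and to $\by^*$ at $+\infty$, whose energy is
$$
E(u)=\CA_F(\bz)-\CA_F(\by^*)\le \s_{[\CP^n]}(F)-\s_{[\pt]}(F).
$$
Finally, since $\s_{[\CP^n]}$ and $\s_{[\pt]}$ are Lipschitz in the $C^0$-topology as functions of the Hamiltonian, $\s_{[\CP^n]}(F)-\s_{[\pt]}(F)$ converges to $\s_{[\CP^n]}(H^{\nat k})-\s_{[\pt]}(H^{\nat k})=\CA_{H^{\nat k}}(\bx)-\CA_{H^{\nat k}}(\by)$ as $F\to H^{\nat k}$ in $C^2$; this limit equals $\gamma(H^{\nat k})\ge \gamma(\varphi^k)>0$ because $\varphi^k\neq \id$ (it is also positive by the strict monotonicity in Theorem \ref{thm:bijection}). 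Hence, for $F$ sufficiently $C^2$-close to $H^{\nat k}$, $\s_{[\CP^n]}(F)-\s_{[\pt]}(F)\le 2\big(\CA_{H^{\nat k}}(\bx)-\CA_{H^{\nat k}}(\by)\big)$, which yields \eqref{eq:deg-energy}.

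Apart from the two elementary monotonicity and continuity inputs in the last paragraph, the whole argument is bookkeeping layered on constructions already recalled in the paper. The one place that calls for genuine care is the chain-level step: arranging that for a generic point $p$ (and generic almost complex structure) the operator $\Phi_{\{p\}}$ is an honest chain map computing the cap product, with the pertinent zero-dimensional moduli spaces cut out transversely, so that a trajectory through $p$ can actually be produced. This is standard (see \cite{LO}), and I do not expect it to be a real obstacle; the substantive point is rather the observation that the limiting action gap $\CA_{H^{\nat k}}(\bx)-\CA_{H^{\nat k}}(\by)$ is strictly positive, which is exactly what legitimizes the clean factor-of-$2$ bound in place of a $1+o(1)$ estimate.
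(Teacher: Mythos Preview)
Your proof is correct and follows essentially the same route as the paper: pick a minimizing cycle for $[\CP^n]$, apply the chain-level cap product by a generic point $p$ to obtain a cycle representing $[\pt]$, take the orbit of maximal action in the resulting cycle, and read off a Floer trajectory through $p$ whose energy is bounded by the difference of the two spectral values. The only cosmetic difference is that the paper tracks the perturbation via explicit $\eps$-closeness of orbit actions (bounding $E(u)\le \CA_{H^{\nat k}}(\bx)-\CA_{H^{\nat k}}(\by)+2\eps$), whereas you invoke continuity of $\s_{[\CP^n]}$ and $\s_{[\pt]}$ directly; both then use the strict positivity of the action gap (from Theorem~\ref{thm:bijection}) to absorb the error into the factor~$2$.
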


\begin{proof} We start with several general remarks. Fix
  $\eps>0$. Then, when $F$ is sufficiently $C^2$-close to
  $H^{\nat k}$, every capped $k$-periodic orbit $\bz$ of $H^{\nat k}$
  splits under the perturbation $F$ into several capped orbits $\bz_i$
  located near $\bz$ with actions and mean indices $\eps$-close to the
  action and the mean index of $\bz$. All $k$-periodic orbits of $F$
  arise in this way.  (We view both $F$ and $H^{\nat k}$ as
  $k$-periodic Hamiltonians.) Moreover, for two $k$-periodic orbits
  $\bz$ and $\bz'$ we have
$$
\CA_{H^{\nat k}}(\bz)>\CA_{H^{\nat k}}(\bz')
\textrm{ iff }
\CA_{F}(\bz_i)>\CA_{F}(\bz'_j)
\textrm{ for all (or just one pair of) $i$ and $j$}.
$$

Let $\bz$ be the action carrier for some class
$\alpha\in\HF\big(H^{\nat k}\big)$. Then, in the notation from Section
\ref{sec:spec} and, in particular, \eqref{eq:cycle-action}, for every
cycle $\sigma\in \CF_*(F)$ representing~$\alpha$,
\begin{equation}
\label{eq:A}
\s_\sigma(F)\geq \s_\alpha\big( H^{\nat k}\big)-\eps=\CA_{H^{\nat
    k}}(\bz)-\eps,
\end{equation}
and there exists a representative $\sigma_{\min}$ such that
\begin{equation}
\label{eq:Amin}
\s_{\sigma_{\min}}(F)\leq \s_\alpha\big( H^{\nat k}\big)+\eps=\CA_{H^{\nat
    k}}(\bz)+\eps.
\end{equation}
In particular, at least one of the orbits $\bz_i$ enters
$\sigma_{\min}$ with non-zero coefficient.

Let us pick such a cycle $\sigma_{\min}$ representing $[\CP^n]$ for
$F$ and satisfying \eqref{eq:Amin}. Thus
$$
\s_{\sigma_{\min}}(F)\leq \CA_{H^{\nat
    k}}(\bx)+\eps.
$$

Next, we pick a generic $p\in\CP^n$ and view it as a cycle
representing $[\pt]\in \HQ_*(\CP^n)$. Then acting by $p$ on
$\sigma_{\min}$, we obtain a cycle $P\in \CF_{-n}(F)$ also
representing $[\pt]$. By \eqref{eq:A},
$$
\s_P(F)\geq \CA_{H^{\nat
    k}}(\by)-\eps.
$$
A point $\by_j$ with action $\s_P(F)$ enters the cycle $P$, and hence
there exists a solution $u$ of the Floer equation for $F$ connecting a
point from $\sigma_{\min}$ to $\by_j$. This solution has energy
$$
E(u)\leq \s_{\sigma_{\min}}(F)-\s_P(F)\leq \CA_{H^{\nat
    k}}(\bx)-\CA_{H^{\nat k}}(\by)+2\eps
$$
and passes through $p$, i.e., $u(0,0)=p$. Making $\eps>0$ small enough
we obtain \eqref{eq:deg-energy}, which completes the proof of the
lemma.
\end{proof}

Now the proof of the theorem is finished essentially in the same way
as in the non-degenerate case. First, note that $\| F\|_{C^2}$ can be
made arbitrarily close to $\| H^{\nat k}\|_{C^2}=\| H\|_{C^2}$. (It is
again essential here that $H$ is periodic in time and $H^{\nat k}_t$
is simply the Hamiltonian $H_t$ but with $t\in S^1_k$.)  Then
\eqref{eq:velocity-energy} still holds, where the upper bound on the
right is uniform in $k$ and completely determined by $\| H\|_{C^2}$.

Next, the bound \eqref{eq:distance} turns into
$$
d\big(p,\varphi_F(p)\big) \leq e^{Ck} O\big(E(u)^{1/4}\big)
$$
for a generic $p$, where we can again take any $C>2\| H\|_{C^2}$. By
making $F$ sufficiently close to $H^{\nat k}$ we can make sure that
$d\big(\varphi^k(p),\varphi_F(p)\big)$ is arbitrarily small for all
$p$, and hence \eqref{eq:distance} holds in its original form for all
$p$ uniformly in $k$, provided that $E(u)$ is small enough (depending
on $\| H\|_{C^2}$).

Finally, when $k=k_i$, we have
$$
E(u)\leq 2\big(\CA_{H^{\nat k}}(\bx)-\CA_{H^{\nat
    k}}(\by)\big)=2\big(\hmu(\bx)-\hmu(\by)\big)\leq 4e^{-ck}
$$
by Lemma \ref{lemma:deg-energy} and again
$ \|\varphi^{k_i}\|_{C^0}\to 0 $ exponentially fast when $c>4C$.
\end{proof}

\begin{Remark}
  As has been pointed out to us by Seyfaddini, one can also derive
  Theorem \ref{thm:exp-L} from Theorem \ref{thm:gamma} and Remark
  \ref{rmk:Delta-gamma} by employing a ``H\"older type'' inequality
  relating the $C^0$-norm, the $\gamma$-norm and the $C^0$-norm of the
  derivative. However, the direct proof given here is of independent
  interest and may have other applications.
\end{Remark}

Just as in \cite{Br} we have the following corollary of Theorem
\ref{thm:exp-L}:

\begin{Corollary}
\label{cor:mixing}
Let $\varphi$ be a pseudo-rotation of $\CP^n$ with exponentially
Liouville mean index vector. Then $\varphi$ is not topologically
mixing and, in particular, not mixing with respect to the Lebesgue
measure.
\end{Corollary}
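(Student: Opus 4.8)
The plan is to deduce the corollary directly from Theorem~\ref{thm:exp-L}, which supplies a sequence $k_i\to\infty$ with $\varphi^{k_i}\stackrel{C^0}{\longrightarrow}\id$, i.e.\ $\sup_{p\in\CP^n}d\big(p,\varphi^{k_i}(p)\big)\to 0$, where $d$ is the distance on $\CP^n$. Recall that $\varphi$ is \emph{topologically mixing} if for every pair of non-empty open sets $U,V\subset\CP^n$ there is an $N$ with $\varphi^k(U)\cap V\neq\emptyset$ for all $k\geq N$. To rule this out I would fix two non-empty open sets $U$ and $V$ with disjoint closures -- possible since $\dim\CP^n>0$ -- and put $\delta=d(\overline U,\overline V)>0$. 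Choosing $i$ large enough that $\sup_p d\big(p,\varphi^{k_i}(p)\big)<\delta$, every point of $U$ is displaced by $\varphi^{k_i}$ a distance less than $\delta$, so $\varphi^{k_i}(U)$ lies in the $\delta$-neighborhood of $U$ and is therefore disjoint from $V$. Since this holds for all large $i$ and $k_i\to\infty$, there are arbitrarily large iterates $k$ with $\varphi^k(U)\cap V=\emptyset$, contradicting topological mixing. Hence $\varphi$ is not topologically mixing.

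For the second assertion I would invoke the elementary fact that a measure-preserving transformation which is mixing with respect to a Borel probability measure of full support is automatically topologically mixing: if $\mu$ has full support and $\mu\big(V\cap\varphi^{-k}(U)\big)\to\mu(U)\mu(V)$, then for non-empty open $U,V$ one has $\mu(U),\mu(V)>0$, so $V\cap\varphi^{-k}(U)\neq\emptyset$ for all large $k$, i.e.\ $\varphi^k(V)\cap U\neq\emptyset$ for all large $k$. Since $\varphi$, being Hamiltonian, preserves the Fubini--Study volume form and hence a probability measure of full support having the same null sets and support as Lebesgue measure, mixing with respect to Lebesgue measure would force topological mixing, which we have just excluded. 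This gives the ``in particular'' clause.

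I expect no real obstacle here; the argument is a short deduction, in the spirit of the analogous corollary for pseudo-rotations of the disk in~\cite{Br}. The only points needing (routine) care are the precise meaning of the $C^0$-convergence produced by Theorem~\ref{thm:exp-L} -- namely uniform convergence of $\varphi^{k_i}$ to the identity -- and the implication ``measure mixing with full support $\Rightarrow$ topological mixing'', which is exactly where the full support of the Liouville measure on $\CP^n$ enters.
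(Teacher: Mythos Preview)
Your argument is correct and is precisely the standard deduction the paper has in mind: the paper gives no explicit proof but simply declares the corollary an immediate consequence of Theorem~\ref{thm:exp-L}, ``just as in~\cite{Br}'', and your two steps (arbitrarily large iterates $C^0$-close to $\id$ force $\varphi^{k_i}(U)\cap V=\emptyset$ for disjoint open $U,V$; full-support measure mixing implies topological mixing) are exactly how one fills in that reference.
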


We conclude this section with several remarks.  First, note that in
dimension two the $\gamma$-norm is continuous with respect to the
$C^0$-norm; \cite{Se}.  Furthermore, similar results in higher
dimensions for symplectically aspherical manifolds and in some other
settings have been recently obtained in \cite{BHS}. Thus, for $S^2$
and more generally when such continuity is established, Theorem
\ref{thm:exp-L} implies Corollary \ref{cor:gamma} in the exponentially
Liouville case. Finally, we conjecture that a variant of Theorem
\ref{thm:exp-L} holds without the assumption that $\Delta$ is
exponentially Liouville and in this case one may also have an analog
of the frequency bound similar to that in Theorem~\ref{thm:gamma}.

It is also worth mentioning that strictly speaking the results in
\cite{Br} are established for exponentially Liouville pseudo-rotations
of $D^2$ while, for $n=1$, our Theorem \ref{thm:exp-L} concerns
exponentially Liouville pseudo-rotations of $S^2$. The difference in
the domains is rather technical than conceptual, although it does
effect what symplectic topological tools are better suited for the
task (holomorphic curves vs.\ Floer homology). In any event, the
results for $S^2$ can be derived from those for $D^2$ and vice
versa. In one direction, from $D^2$ to $S^2$, this can be done by
simply applying an oriented real blow-up to one of the fixed
points. In the opposite direction, the argument is considerably more
subtle and requires more work. The difficulty lies in the fact that a
pseudo-rotation of $S^2$ obtained from one of $D^2$ by, e.g.,
collapsing $\p D^2$ to a point or doubling $D^2$, is not
smooth. However, it is Lipschitz and the proof seems to go through
with only minor modifications including extending the resonance
relations to this setting. Finally, the requirement in \cite{Br} that
the pseudo-rotation is irrational is more of terminological than of
mathematical nature: a pseudo-rotation of $D^2$ is automatically
irrational, for otherwise it would have periodic orbits on $\p D^2$.
One can interpret this requirement as an implicit non-degeneracy
condition. Then similarly, it is also automatically satisfied for
pseudo-rotations of $S^2$; see Corollary \ref{cor:S^2-nondeg}.

\section{Crossing energy and the proof of Theorem \ref{thm:isolated}}
\label{sec:energy-isolated}

The proof of Theorem \ref{thm:isolated} hinges on a technical result
generalizing \cite[Thm.\ 3.1]{GG:hyperbolic} and asserting, roughly
speaking, that the energy of a Floer trajectory asymptotic to $x^k$
and crossing a fixed isolating neighborhood of $x$ is bounded from
below by a constant independent of $k$. With future applications in
mind we start by proving this result in a form more general than
needed for the proof of the theorem.

\subsection{Crossing energy}
\label{sec:cross-energy}
Let $K\subset M$ be a compact invariant set of a Hamiltonian
diffeomorphism $\varphi=\varphi_H$ of a symplectic manifold
$M$. Recall that $K$ is said to be isolated (as an invariant set) if
there exists a neighborhood $U\supset K$ such that for no initial
condition $p\in U\setminus K$ the orbit through $p$ is contained in
$U$, i.e., there exists $k\in\Z$, possibly depending on $p$, such that
$\varphi^k(p)\not\in U$. The neighborhood $U$ is called an isolating
neighborhood of $K$. Then any neighborhood of $K$ contained in $U$ is
also isolating, and hence such neighborhoods can be made arbitrarily
small. (Note that a fixed point may be isolated for all iterations as
a fixed point while not isolated as an invariant set.)

Consider solutions $u\colon \Sigma\to M$ of the Floer equation for
$H^{\nat k}$, where $\Sigma\subset \R\times S^1_k$ is a closed domain,
i.e., a closed subset with non-empty interior. Note that the period
$k$ is not fixed, and the domain $\Sigma$ of $u$ need not be the
entire cylinder $\R\times S^1_k$.  By definition, the energy of $u$ is
$$
E(u)=\int_\Sigma \|\p_s u\|^2 \, ds dt.
$$
Here $\|\cdot\|$ stands for the norm with respect to
$\left<\cdot\,,\cdot\right>=\omega(\cdot,J\cdot)$, and hence
$\|\cdot\|$ also depends on the background almost complex structure
$J$. We say that $u$ is asymptotic to $K$ as $s\to\infty$ (or as
$s\to-\infty$) if for any neighborhood $V$ of $K$ the domain $\Sigma$
contains a cylinder $[s_V,\,\infty)\times S^1_k$ (or
$(-\infty,\,s_V]\times S^1_k$) which is mapped into $V$ by $u$.

Finally, let $U$ be a fixed (sufficiently small) isolating
neighborhood of $K$. Set $\p U:=\bar{U}\setminus U$.

\begin{Theorem}[Crossing Energy Theorem]
\label{thm:energy}
There exists a constant $c_\infty>0$, independent of $k$ and $\Sigma$,
such that for any solution $u$ of the Floer equation with
$u(\p \Sigma)\subset \p U$ and $\p \Sigma\neq\emptyset$, which is
asymptotic to $K$ as $s\to\infty$ or $s\to-\infty$, we have
\begin{equation}
\label{eq:energy}
E(u)>c_\infty .
\end{equation}
Moreover, the constant $c_\infty$ can be chosen to make
\eqref{eq:energy} hold for all $k$-periodic almost complex structures
(with varying $k$) $C^\infty$-close to $J$ uniformly on $\R\times U$.
\end{Theorem}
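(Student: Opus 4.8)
The plan is to argue by contradiction and Gromov compactness, extending the argument behind \cite[Thm.\ 3.1]{GG:hyperbolic}. Reversing the $s$-coordinate turns a solution of the Floer equation for $H^{\nat k}$ into a solution of a Floer-type equation with the same geometric bounds, so I may assume throughout that $u$ is asymptotic to $K$ as $s\to+\infty$. I will fix once and for all an isolating neighborhood $U'$ of $K$ with $\bar U'\subset U$, together with a small $\rho>0$ so that the metric neighborhood $V_0=\{p\mid d(p,K)<\rho\}$ has $\bar V_0\subset U'$; then $\partial U$ lies at positive distance from $\bar V_0$, and $\partial V_0\cap K=\emptyset$.

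Suppose \eqref{eq:energy} fails for every $c_\infty>0$. Then there are periods $k_\nu$, connected domains $\Sigma_\nu\subset\R\times S^1_{k_\nu}$ with a non-compact end over $+\infty$ and $\partial\Sigma_\nu\neq\emptyset$, almost complex structures $J_\nu\to J$ in $C^\infty$ on $\R\times U$, and Floer solutions $u_\nu\colon\Sigma_\nu\to\bar U$ for $(H^{\nat k_\nu},J_\nu)$, asymptotic to $K$ at $+\infty$, with $u_\nu(\partial\Sigma_\nu)\subset\partial U$ and $E(u_\nu)\to 0$. The first key step is to recenter correctly: I would let $\sigma_\nu$ be the \emph{largest} level $s$ at which the circle $\{s\}\times S^1_{k_\nu}$ either meets $\partial\Sigma_\nu$ or fails to be mapped by $u_\nu$ into $V_0$. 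The asymptotic condition makes $\sigma_\nu$ finite, every point of $\partial\Sigma_\nu$ has $s$-coordinate at most $\sigma_\nu$, the half-cylinder $(\sigma_\nu,\infty)\times S^1_{k_\nu}$ is contained in $\Sigma_\nu$ and mapped into $V_0$, and by continuity there is $w_\nu=(\sigma_\nu,\tau_\nu)\in\Sigma_\nu$ with $u_\nu(w_\nu)\in\partial V_0$. Translating in $s$ I arrange $\sigma_\nu=0$, and translating $t$ by an integer (which preserves the equation, since $H^{\nat k_\nu}_t=H_t$) I arrange $\tau_\nu\in[0,1]$; then I pass to a subsequence with $\tau_\nu\to\tau_\infty$.

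The second step is the limit. Since $E(u_\nu)\to 0$ there is no bubbling---a non-constant rescaling limit would be a $J$-holomorphic sphere of vanishing area---so the $u_\nu$ are uniformly bounded in $C^\infty_{\mathrm{loc}}$ on $(0,\infty)\times S^1_{k_\nu}$. The point for uniformity in $k$ is that $H^{\nat k}$ is literally $H$ read off over $S^1_k$, hence obeys $k$-independent $C^2$-bounds, so the estimate \eqref{eq:velocity-energy} and target-local compactness \cite{Fi, Sa} apply with constants independent of $k$. Diagonalizing, $u_\nu$ converges in $C^\infty_{\mathrm{loc}}$ to a solution $v$ of the Floer equation for $(H,J)$, defined on $(0,\infty)\times S^1_k$ when the $k_\nu$ stay bounded (so $k_\nu\equiv k$ along the subsequence) and on $(0,\infty)\times\R$ when $k_\nu\to\infty$ (lift to the universal cover; the centered fundamental domains of $S^1_{k_\nu}$ exhaust $\R$). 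Lower semicontinuity of energy gives $E(v)\le\liminf E(u_\nu)=0$, hence $\partial_s v\equiv 0$ and $v(s,t)=\gamma(t)$ where $\gamma$ is an integral curve of the flow of $X_H$; its image lies in $\bar V_0\subset U'$, while $\gamma(\tau_\infty)=\lim_\nu u_\nu(w_\nu)\in\partial V_0$.

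The final step is to invoke the isolating property. If the $k_\nu$ are bounded, $\gamma$ is a $k$-periodic orbit; if $k_\nu\to\infty$, the closure of the bi-infinite orbit $\gamma$ is compact and $\varphi_H$-invariant. In either case this is a compact invariant subset of $U'$, hence contained in $K$, so $\gamma(\tau_\infty)\in K$---contradicting $\gamma(\tau_\infty)\in\partial V_0$ and $\partial V_0\cap K=\emptyset$. Since the argument allows $J_\nu\to J$, the resulting $c_\infty$ works uniformly for all $k$-periodic almost complex structures $C^\infty$-close to $J$ on $\R\times U$. I expect the main obstacle to be precisely this interplay between the shape of $\Sigma$ and the isolating property: one must choose the recentering so that the zero-energy limit is a \emph{complete} (or periodic) orbit trapped in an isolating neighborhood yet still meeting $\partial V_0$, which is why the ``last exit from $V_0$'' normalization and the dichotomy ``$k_\nu$ bounded versus $k_\nu\to\infty$'' are both needed.
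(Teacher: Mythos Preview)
Your argument is correct and follows the same overall contradiction/compactness strategy as the paper, but the implementations differ in two respects worth noting.

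First, the compactness machinery. The paper passes to the graphs $\Gamma_i\subset M\times\C$ of the Floer trajectories, which are embedded $\hat J_i$-holomorphic curves, restricts to a finite box $P=\tilde U_T\times[-a,a]$, and invokes Fish's target-local Gromov compactness \cite{Fi} to obtain a limiting section. You instead argue that $E(u_\nu)\to 0$ rules out bubbling, so the standard Floer $C^\infty_{\mathrm{loc}}$ compactness applies directly on the half-cylinder (or half-plane when $k_\nu\to\infty$). This is more elementary and avoids the graph construction entirely; the price is that you must verify that the recentered domains actually contain the full half-cylinder $(0,\infty)\times S^1_{k_\nu}$, which you do via the ``last exit'' normalization.

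Second, the contradiction. The paper fixes in advance a finite \emph{escape time} $T_0$ --- every integral curve through $\partial\tilde B$ leaves $\tilde B$ within time $T_0$ --- restricts to a rectangle with $T>T_0$, and obtains a limiting integral curve trapped in $\tilde B$ for time $[-T',T']$ with $T'>T_0$. You instead pass to the full limit, obtaining either a $k$-periodic orbit (if $k_\nu$ is bounded) or a bi-infinite orbit (if $k_\nu\to\infty$) whose closure is a compact $\varphi_H$-invariant subset of $U'$, hence contained in $K$ by the isolating property, contradicting $\gamma(\tau_\infty)\in\partial V_0$. The paper's version needs only a finite piece of the limit and sidesteps the bounded/unbounded dichotomy; yours uses the isolating property in its cleanest form (compact invariant subsets of $U'$ lie in $K$) without the escape-time quantification. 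Both routes are sound.
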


The key point of this result is that the lower bound $c_\infty>0$ can
be taken independent of $k$. The requirements of the theorem are met
when $K$ is just one fixed point $x$ of $\varphi$ and $x$ is
hyperbolic -- this is the setting of \cite[Thm.\ 3.1]{GG:hyperbolic}
-- or more generally when $K=\{x\}$ is isolated as an invariant set as
in Theorem \ref{thm:isolated}. The requirements are also satisfied
when $K$ is a hyperbolic invariant subset (e.g., a Smale's horseshoe).
The condition that $K$ is isolated is essential and cannot be removed;
see \cite[Rmk.\ 3.4]{GG:hyperbolic}.  Also note that periodic orbits
of $\varphi_H^t$ in $M$ corresponding to the fixed points in $K$ need
not be contractible. The proof of Theorem \ref{thm:energy} follows
closely the line of reasoning establishing \cite[Thm.\
3.1]{GG:hyperbolic}. However, Theorem \ref{thm:energy} is considerably
more general than that result and we feel that giving a detailed proof
is justified.

\begin{proof}
  We will focus on solutions $u$ asymptotic to $K$ as
  $s\to\infty$. The case of $s\to-\infty$ can be handled in a similar
  fashion. Before going into details of the proof, let us spell out
  the idea. Note first that since $U$ is an isolating neighborhood,
  there exists $T_0>1$, an escape time, such that every integral curve
  $\varphi_H^t(p)$, $t\in [-T_0,\, T_0]$, touching $\p U$ at some
  moment $\tau\in [0,\,1]$ cannot be entirely contained in
  $\bar{U}$. Arguing by contradiction, assume that there exists a
  sequence of solutions $u_i$ of the Floer equation with $E(u_i)\to 0$
  asymptotic to $K$ at $+\infty$ and defined on
  $\Sigma_i\subset \R\times S^1_{k_i}$. Then for every $T>T_0$ one can
  also find a sequence of solutions $v_i$ defined on a subset of a
  rectangle $[-a,\,a]\times [-T,\,T]\subset \C$ containing
  $[-a,\,a]\times [0,\,T]$, mapping this half-rectangle into $\bar{U}$
  and such that $v_i(0, \tau_i)\in \p U$ for some $\tau_i\in [0,\,1]$.
  As $i\to\infty$, these solutions converge to an integral curve
  $(s,t)\mapsto \varphi_H^t(p)$ in the sense of the target-local
  compactness theorem from \cite{Fi}. This curve is parametrized by
  $[-T',\,T']$ with $T>T'>T_0$, touches $\p U$ at some moment
  $\tau\in [0,1]$ and is entirely contained in $\bar{U}$, which is
  impossible by the definition of $T_0$.

  Throughout the proof, it will be convenient to work in $M\times S^1$
  rather than in $M$. Thus let $\tvarphi^t$ be the flow on
  $M\times S^1$ induced by the isotopy $\varphi_H^t$, i.e.,
  $\tvarphi^t(p,\theta)=(\varphi_H^t(p),\theta+t\, \mathrm{mod}\,
  1)$. This is indeed a true flow since $H$ is one-periodic in
  time. In a similar vein, the map $u$ gives rise to the map
$$
\tu\colon \Sigma_i\to M\times S^1, \quad (s,t)\mapsto
\big(u(s,t),t\,\mathrm{mod}\, 1\big).
$$ 
Set $\tK=K\times S^1$, and let $\tB$ and $\tU$ be closed isolating
neighborhoods of $\tK$ such that $\tB\subset \mathrm{int}(\tU)$. (For
instance, we can initially take $\tU=\bar{U}\times S^1$.)

Arguing by contradiction, assume that there exists a sequence of
iterations $k_i\to\infty$, a sequence of $k_i$-periodic almost complex
structures $J_i$ on $M$, compatible with $\omega$ and
$C^\infty$-converging to $J$ uniformly on $\R\times U$, and a sequence
$u_i\colon\Sigma_i\to U$ of solutions of the Floer equation for $J_i$
and $H^{\nat k_i}$, satisfying the hypotheses of Theorem
\ref{thm:energy} and such that $E(u_i)\to 0$.

To proceed, let us first make several simplifying assumptions. Namely,
without loss of generality we can assume that the boundaries $\p \tU$
and $\p \Sigma_i$ are smooth. Indeed, to this end we can shrink $\tU$
slightly and simultaneously make sure that $\p \tU$ is transverse to
the maps $\tu_i$. At this stage we do not need $\tU$ to be a direct
product.

Furthermore, we can assume that $[0,\infty)\times S^1_{k_i}$ is the
largest half-cylinder in $\Sigma_i$ mapped by $\tu_i$ into $\tB$,
i.e., $\tu_i\big([0,\infty)\times S^1_{k_i}\big)\subset \tB$ and
$\tu_i\big(\{0\}\times S^1_{k_i}\big)$ touches $\p \tB$ at at least
one point $\tu_i(0,\tau_i)$ with $0\leq \tau_i\leq 1$. Here the first
assertion readily follows since $H$ is independent of $s$, and hence
the Floer equation is translation invariant. As a consequence, we can
change $u_i$ by applying a translation in $s$ without affecting the
energy. To ensure that $0\leq \tau_i\leq 1$, we apply an integer
translation in $t$ to $u_i$ and $J_i$. Since the almost complex
structures $J_i$ $C^\infty$-converge to $J$ uniformly in $t\in\R$, the
same is true for the translated almost complex structures. (This
changes the almost complex structures $J_i$ and the solutions $u_i$ by
translation, but again does not affect the energy of $u_i$.)

Finally, by passing if necessary to a subsequence, we may assume that
the sequence $\tau_i$ converges.

As has been pointed out above, since $\tB$ is an isolating
neighborhood, there exists a constant $T_0>1$ (an escape time),
depending only on $\tB$ and $H$, such that no integral curve of
$\tvarphi^t$ passing through a point of $\p \tB$ (or near $\p \tB$) at
a moment $\tau\in [0,\,1]$ can stay in $\tB$ for all $t$ with
$|t|<T_0$. This observation is the key to the proof and this is where
we use the condition that $K$ is isolated.

Next, factoring the universal covering map $\C\to\R\times S^1$ as
$$
\C\to\R\times S^1_{k_i}\to\R\times S^1,
$$
we lift the domains $\Sigma_i$ to the domains $\hat{\Sigma}_i$ in $\C$
and view the maps $u_i$ as maps $\hat{\Sigma}_i\to M$, which are
$k_i$-periodic in $t$. The graph $\Gamma_i$ of $u_i$ is an embedded
$\hat{J}_i$-holomorphic curve in $M\times \C$ with respect to an
almost complex structure $\hat{J}_i$ which incorporates both $J_i$ and
$X_H$. The projection $\pi\colon M\times\C\to \C$ is holomorphic, and
hence so is the projection of $\Gamma_i$ to $\C$. Let
$\tau=\lim\tau_i\in [0,\,1]$.

Pick arbitrary constants $T>T_0$ and $a>0$ and set
$$
\Pi=[-a,\,a]\times [- T,\, T]\subset \C.
$$
From now on we will focus on the restrictions $v_i:=u_i|_\Pi$ and
$\tv_i:=\tu_i|_\Pi$. Let $S_i$ be the graph of $v_i$.  Denoting by
$\tU_T$ the part of the lift of $\tU$ to the covering
$M\times \R\to M\times S^1$ lying over $[-T,\,T]\subset \R$, we have
$$
S_i=\Gamma_i\cap P,\textrm{ where } P=\tU_T\times [-a,\,a]\subset
M\times \C.
$$
Clearly, $\p S_i\subset \p P$ and
$$
\Area(S_i)\leq\Area(\Pi)+E(u_i)<\const,
$$ 
where the constant on the right is independent of $i$. Let us now
shrink $\Pi$ and $\tU$ slightly. To be more precise, set
$$
\Pi'=[-a',\,a']\times [-T',\,T']\subset \Pi,
$$
where $0<a'<a$ and $T_0<T'<T$, and let $\tU'$ be a closed neighborhood
of $K\times S^1$ with boundary close to $\p \tU$. (More specifically,
$\tB\subset \mathrm{int}(\tU')$ and $\tU'\subset \mathrm{int}(\tU)$.)
We denote the resulting subset of $M\times \C$ by $P'$.

By the target-local Gromov compactness theorem, \cite[Thm.\ A]{Fi},
the intersections of the embedded $\hat{J}_i$-holomorphic curves
$S_i\cap P'$ Gromov--converge, after passing if necessary to a
subsequence, to a (cusp) $\hat{J}$-holomorphic curve $S'$ in $P'$ with
boundary in $\p P'$. This holomorphic curve is a union of
multi-sections over subsets of $\Pi'$ and possibly some components
contained in the fibers of the projection $\pi\colon P'\to \Pi'$ (the
bubbles). The latter are points since $E(u_i)\to 0$.

Furthermore, $S'$ is in fact a unique section over some subset $D$ of
$\Pi'$. One way to see this is to observe that the intersection index
of $S'$ with the fiber over a regular point $(s,t)$ of its projection
to $\Pi'$ is either one or zero -- the intersection index of $S_i$
with the fiber. For instance, the index is one when $(s,t)$ is in the
domain of each $v_i$ and the distance from $\tv_i(s,t)$ to $\p \tU'$
stays bounded away from zero as $i\to \infty$. Note also that here we
need the parameters $a'$ and $T'$ and the neighborhood $\tU'$ to be
``generic''.

To summarize, $S'$ is the graph of a solution $v$ of the Floer
equation defined on some connected subset $D$ of $\Pi'$. Moreover, as
is easy to see from \cite{Fi}, after making an arbitrarily small
change to $a'$ and $T'$ we can ensure that the domain $D$ of $v$ has
piece-wise smooth boundary. The maps $u_i$ uniformly converge to $v$
on compact subsets of $\mathrm{int}(D)$.

The domain $D$ contains the half-rectangle $\Pi'_+= \{s>0\}\cap\Pi'$.
Indeed, $\tu_i$ maps $\Pi_+$, the $\{s\geq 0\}$-part of $\Pi$, into
$\tB\subset \textrm{int}(\tU')$. Hence the projection of $S'_j$ to
$\Pi$ contains $\Pi_+$ or, in other words, $\Pi_+$ is in the domain of
$v$. Furthermore, $D$ also contains the closure of $\Pi'_+$ and, in
particular, the point $(0,\tau)$. In fact,
$(0,\tau)\in\textrm{int}(D)$ since the distance from the points
$\tv_i(0,\tau)\in \tB$ to $\p \tU'$ stays bounded away from zero. Let
$\tv$ be the natural lift of $v$ to a map to $\tU'$. Then
$$
\tv(0,\tau)=p:=\lim \tv_i(0,\tau_i)\in \p \tB.
$$

Since $E(u_i)\to 0$, we have $E(v)=0$. Thus $\p_s v(s,t)=0$
identically on $D$, and hence $v(s,t)$ is an integral curve
$\gamma(t)$ of the flow $\tvarphi^t$ on $M\times S^1$. This integral
curve passes through the point $p\in \p \tB$ at the moment $\tau$, and
$\gamma(t)\in \tB$ for all $t\in [-T',\,T']$, which is impossible due
to our choice of $T_0$ and the fact that $T'>T_0$. This contradiction
completes the proof of the theorem.
\end{proof}

\subsection{Proof of Theorem \ref{thm:isolated}}
\label{sec:pf-isolated}
With the crossing energy lower bound established, we are now in a
position to prove Theorem \ref{thm:isolated}. In fact, we prove a
slightly more precise result. To state it, recall that the normalized
augmented action of a $k$-periodic orbit $y^k$ is simply
$\tCA_{H^{\nat k}}(y^k)/k$, where the augmented action is defined by
\eqref{eq:aug-action}. Thus all iterations of an orbit have the same
normalized augmented action.

\begin{Theorem}
\label{thm:isolated2}
Let $M^{2n}$ be a strictly monotone symplectic manifold.
Assume that $N\geq n+1$ and 
\begin{equation}
\label{eq:hom-relation}
\alpha *\beta = \qq [M]
\end{equation}
in $\HQ_*(M)$ for some homology classes $\alpha\in \H_*(M)$ and
$\beta\in \H_*(M)$ with $|\alpha|<n$ and $|\beta|<n$.  Let $\varphi_H$
be a Hamiltonian diffeomorphism of $M$ with a contractible periodic
orbit $x$ which has a neighborhood not intersecting any other periodic
orbit of any period, is isolated as an invariant set and such that
$\HF(x^k)\neq 0$ for all $k\in\N$. Furthermore, let $\tI$ be an
arbitrary interval containing $\tCA_H(x)$.  Then $\varphi$ has
infinitely many periodic orbits with normalized augmented action in
$\tI$.
\end{Theorem}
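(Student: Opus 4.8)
The plan is to run a standard Floer-continuation argument with action selectors, using the Crossing Energy Theorem (Theorem \ref{thm:energy}) applied to the isolated invariant set $K=\{x\}$ to force extra periodic orbits with controlled action, and then to convert ``infinitely many action values'' into ``infinitely many simple orbits'' via the augmented-action bookkeeping. More precisely, set $c=\tCA_H(x)$ and suppose toward a contradiction that $\varphi$ has only finitely many periodic orbits with normalized augmented action in $\tI$; enlarging or shrinking $\tI$ harmlessly, we may assume $\tI$ is an open interval about $c$ and that the only periodic orbit of $\varphi$ of \emph{any} period with normalized augmented action in $\tI$ that is not an iterate of $x$ lives in a finite list $y_1,\ldots,y_m$, and also (by the hypothesis on $x$) that a fixed small neighborhood $U$ of $x$ is both an isolating neighborhood for $\{x\}$ as an invariant set and contains no other periodic orbit of any period. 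The strategy is to show that for infinitely many $k$ the Hamiltonian $H^{\nat k}$ has a capped periodic orbit $\bz_k$, distinct from all cappings of $x^k$ and of the $y_j^{k}$, whose \emph{augmented} action per period lies in $\tI$; since there are only finitely many simple orbits available with normalized augmented action in $\tI$, all but finitely many of the $\bz_k$ must be iterates of a common simple orbit, and one checks this is impossible because its normalized augmented action is a fixed number in $\tI$ while we will have arranged the $\bz_k$ to have (the appropriate) \emph{distinct} normalized augmented actions — contradiction.

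The mechanism producing $\bz_k$ is the cap product together with the factorization \eqref{eq:hom-relation}. Work with $H^{\nat k}$ and its action selectors $\s_{[M]}$ and $\s_{\qq[M]}$; since $\qq[M]=\alpha*\beta$ with $|\alpha|,|\beta|<n$, the associativity/multiplicativity \eqref{eq:qh-fh-action} gives $\Phi_{\qq[M]}=\Phi_\alpha\Phi_\beta$, and the carrier $\bx_k^{top}$ for $\s_{[M]}(H^{\nat k})$ is sent by $\Phi_{\qq[M]}$, up to the filtration shift $I_\omega(\qq)=-\lambda$, to something homologically essential with action at least $\s_{\qq[M]}(H^{\nat k})=\s_{[M]}(H^{\nat k})-\lambda$. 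The point of the two intermediate selectors $\s_{[M]}\circ\Phi_\alpha$ and the like is that the relevant Floer trajectories, realizing the two cap-product steps, are connecting orbits: one gets a solution $u$ of the Floer equation for $H^{\nat k}$ with energy $E(u)\le \s_{[M]}(H^{\nat k})-(\s_{[M]}(H^{\nat k})-\lambda)=\lambda$ (uniformly in $k$!) whose output orbit $\by_k$ has action exactly $\s_{[M]}(H^{\nat k})-\lambda$ and lies in a prescribed degree determined by $|\alpha|+|\beta|$. Because the degrees $|\alpha|,|\beta|<n$ force, via \eqref{eq:supp} and the mean-index estimates of Section~\ref{sec:spec}, the mean index of the carrier and of $\by_k$ to differ from that of $\bx_k^{top}$, the trajectory $u$ genuinely moves in the manifold: it cannot be $s$-independent, and in particular (by the local triviality Lemma~\ref{lemma:trivial} applied near $x$, exactly as in the proof of Lemma~\ref{lemma:trivial}) it cannot be contained in $U$, so it must cross $\partial U$ while still being forced by action reasons to be asymptotic at one end to an orbit near $x$ — or else to one of the finitely many $y_j$. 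This is the step where the argument needs the Crossing Energy Theorem: if $u$ were asymptotic to $x^k$ (the iterate of the isolated invariant set) and crossed $\partial U$, Theorem~\ref{thm:energy} gives $E(u)>c_\infty$ with $c_\infty$ independent of $k$; combined with the uniform bound $E(u)\le\lambda$ this is no contradiction \emph{yet}, so one must instead iterate the selector relation to spread the action budget over many clusters.

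Here is where I expect the main obstacle, and how I would get around it. The bound $E(u)\le\lambda$ is too crude to conclude directly; the fix, following \cite{GG:hyperbolic}, is to use that $\varphi$ has (by assumption) finitely many periodic orbits with normalized augmented action in $\tI$ outside $\{x^k\}$, hence finitely many \emph{action values} of $H^{\nat k}$ in the relevant normalized range up to the lattice $\lambda\Z$, so the carriers $\bx_k^{top}$, viewed along the orbit $x^k$, have mean indices (equivalently, actions) that must grow: as $k\to\infty$, the orbit $x^k$ carries a spectral value only if its mean index per period is compatible with $\tCA_H(x)\in\tI$, and the cap-product steps then produce trajectories crossing $\partial U$ whose \emph{total} energy summed over the iterated applications is bounded by a constant multiple of $\lambda$ while each crossing costs at least $c_\infty$; taking $k$ large enough that the number of forced crossings exceeds $\lambda/c_\infty$ is the contradiction. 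Concretely: I would run the argument exactly as in \cite[\S 4]{GG:hyperbolic}, replacing the hyperbolicity of $x$ (used there only to guarantee $\HF(x^k)\neq0$ and that $x$ is isolated as an invariant set) by the two explicit hypotheses here, namely $\HF(x^k)\neq0$ for all $k$ and $\{x\}$ isolated as an invariant set, and replacing the input ``$[\pt]*[\CP^{n-1}]=\qq[\CP^n]$'' by the abstract factorization \eqref{eq:hom-relation} with $|\alpha|,|\beta|<n$; the condition $N\ge n+1$ is what guarantees that the degree window $[\,\hmu-n,\hmu+n\,]$ in \eqref{eq:supp} does not wrap around the $2N$-periodicity and so the carrier and the cap-product output genuinely sit in different recapping classes. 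The augmented-action refinement (that the new orbits have normalized augmented action in $\tI$, not merely finitely many action values) is then immediate from \eqref{eq:act-index2}-style reasoning: the augmented action is the action minus $\tfrac{\lambda}{2N}$ times the mean index, the cap product shifts action by $-\lambda$ and degree by $-2N$, so it \emph{preserves} augmented action, and the Crossing Energy trajectories, being asymptotic to orbits near $x$, inherit $\tCA$ arbitrarily close to $\tCA_H(x)$. That gives infinitely many periodic orbits with normalized augmented action in $\tI$, proving Theorem~\ref{thm:isolated2} and hence Theorem~\ref{thm:isolated}.
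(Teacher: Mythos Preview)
Your outline has the right ingredients (crossing energy, the factorization $\alpha*\beta=\qq[M]$, local triviality of the cap product) but the contradiction mechanism you describe is not the one that works, and the carrier framework you set up does not pin the trajectory to $x^k$.

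First, the carrier issue. You work with the action carrier $\bx_k^{\mathrm{top}}$ for $\s_{[M]}(H^{\nat k})$ and then apply $\Phi_\beta$, $\Phi_\alpha$. But nothing forces that carrier to be (a capping of) $x^k$, so you cannot invoke the Crossing Energy Theorem, which needs the trajectory to be asymptotic to the isolated invariant set $K=\{x\}$. The paper avoids this entirely: it takes a nonzero class $\gamma\in\HF_*(\bx^k)$ (available by hypothesis) and proves a splitting lemma (their Lemma~6.2) showing $\HF_*(\bx^k\#\ell A_0)$ is a direct summand in the filtered homology $\HF_*^I(H^{\nat k})$ for suitable $k$. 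This is where the condition $N\ge n+1$ is actually used: the action clustering (below) translates, via $\tCA$, into a mean-index gap $>2n+1$, which together with \eqref{eq:supp} gives the direct-summand property.

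Second, and more seriously, your ``many crossings'' count is wrong. The relation $\alpha*\beta=\qq[M]$ produces exactly two cap-product steps regardless of $k$; iterating it to $(\alpha*\beta)^m=\qq^m[M]$ gives $2m$ crossings against total energy $m\lambda$, and $2m\,c_\infty\le m\lambda$ is never a contradiction. The actual mechanism is a pigeonhole on a \emph{single} application. One chooses $k$ (via Kronecker) so that all action values of $H^{\nat k}$ lie within $\eps<c_\infty$ of the lattice $\lambda\Z$. Starting from $\gamma\in\HF_*(\bx^k)$, the two steps $\Phi_\beta$, $\Phi_\alpha$ land on $\qq\gamma\in\HF_*(\bx^k\#A_0)$, a total action drop of exactly $\lambda$. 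By Lemma~\ref{lemma:trivial} the intermediate orbit $\by_*$ is not among the orbits $x^k$ splits into, so both Floer trajectories cross $\partial U$ and each has energy $>c_\infty$. But $\by_*$ has action in some cluster, hence either within $\eps$ of $\CA(\bx^k)$ or within $\eps$ of $\CA(\bx^k)-\lambda$; in either case one of the two trajectories has energy $<\eps<c_\infty$. That is the contradiction. The augmented-action refinement then follows from the second alternative in the Kronecker choice (either $\ta_i=\ta_0$ or $k|\ta_i-\ta_0|$ is huge), which forces $\by_*$ to be an iterate of one of the finitely many $x_i$ with $\ta_i\in\tI$.
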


Theorem \ref{thm:isolated} readily follows from this result. Some
remarks are due before the proof of the theorem.

\begin{Remark}
\label{rmk:isolated}
The key new point of Theorem \ref{thm:isolated2} when compared to
Theorem \ref{thm:isolated} is the control of the augmented action. A
similar refinement can also be made in \cite[Thm.\
1.1]{GG:hyperbolic}.  Theorem \ref{thm:isolated2} and, as a
consequence, Theorem \ref{thm:isolated} have analogs when the orbit
$x$ is not contractible. In this case, we need to require $M$ to be
toroidally monotone with $N$ being the ``toroidal'' minimal Chern
number; cf.\ \cite[Rmk.\ 1.2 and 4.4]{GG:hyperbolic}. Then the
periodic orbits detected in the theorem lie in the free homotopy
classes of the iterated orbits $x^k$, $k\in\N$.
\end{Remark}

\begin{Remark}
  As has been pointed out in Section \ref{sec:sets}, the only monotone
  manifold with $N\geq n+1$ known to the authors is $\CP^n$. There are
  numerous negative monotone manifolds, simply connected and not,
  meeting the conditions of the theorem. Although in this case the
  Conley conjecture holds and $\varphi$ has infinitely many periodic
  orbits unconditionally (see \cite{CGG, GG:nm}), Theorem
  \ref{thm:isolated2} gives additional information about the augmented
  actions of the orbits or their location and free homotopy classes;
  cf.\ \cite{Ba2, GG:nc, Gu:nc, Or1, Or2}.
\end{Remark}

\begin{proof}[Proof of Theorem \ref{thm:isolated2}]
  Before giving a detailed argument let us lay out the logic of the
  proof and explain the main idea. To this end, we assume that $H$ is
  strongly non-degenerate and, arguing by contradiction, that it has
  finitely many periodic orbits; cf.\ the proof of \cite[Thm.\
  1.1]{GG:hyperbolic}. Then, when $U$ is small enough, none of these
  orbits other than $x$ enter $U$. Fix an arbitrary capping of $x$. By
  adding a constant to $H$, we can ensure that
  $\CA_H(\bx)=0$. Furthermore, set $I=(-c,\,c)$, where $c>c_\infty$,
  and pick $\eps<c_\infty$.

  One can then show that there exists an arbitrarily large $k$ such
  that $\bx^k$ is not connected to any $k$-periodic orbit with action
  in $I$ by a solution of the Floer equation of relative index one and
  that all capped $k$-periodic orbits have action in the
  $\eps$-neighborhood of $\lambda\Z$. Acting by $\alpha$ and $\beta$
  on $[\bx^k]\in \HF_*^I\big(H^{\nat k}\big)$ we find a capped
  $k$-periodic orbit $\bar{y}$ and two solutions, $u$ and $v$, of the
  Floer equation for $H^{\nat k}$, the first of which is connecting
  $\bx^k$ to $\by$ and the second is from $\by$ to $\bx^k\#
  A_0$. Here, as in Section \ref{sec:FFH}, $A_0$ is the generator of
  $\Gamma=\pi_2(M)/\ker I_\omega$, and thus
  $\left<\omega,A_0\right>=\lambda$ and
  $\left< c_1(TM),A_0\right> = 2N$. The action of $\by$ is either in
  $(-\eps,\,\eps)$ or in $(-\eps,\,\eps)-\lambda$, and hence either
  $E(u)<\eps$ or $E(v)<\eps$, which is impossibly by Theorem
  \ref{thm:energy} due to the assumption that $\eps<c_\infty$.

  Let us turn now to a rigorous proof in complete generality. As
  above, arguing by contradiction, assume that $H$ has finitely many
  periodic orbits with normalized augmented action in $\tI$. Then,
  replacing $H$ by an iterate and making a change of time, we can also
  assume that $x$ and all simple periodic orbits of $H$ with action in
  $\tI$ are one-periodic. We denote these orbits by
  $x=x_0,\, x_1,\ldots,x_r$. As above, let us pick an arbitrary
  capping of $x$, denote by $\bx$ the resulting capped orbit and
  adjust $H$ so that $\CA_H(\bx)=0$.

  To further fix notation, set $a_i=\CA_H(x_i)\in S^1_\lambda$ and
  $\Delta_i=\hmu(x_i)\in S^1_{2N}$. Thus here again we view the
  actions and mean indices of the orbits without capping as elements
  of the circles $S^1_\lambda=\R/\lambda\Z$ and, respectively,
  $S^1_{2N}=\R/2N\Z$. We have $a_0=0$ due to the normalization of
  $H$. Also set
$$
\ta_i=\tCA_{H}(x_i)\in\tI\subset\R.
$$
Clearly, $\CA_{H^{\nat k}} (x_i^k)=ka_i$ and $\hmu(x_i^k)=k\Delta_i$
and $\tCA_{H^{\nat k}} (x_i^k)=k\ta_i$. Finally, note that without
loss of generality we can assume that $\tI=(\ta_0-\eta,\, \ta_0+\eta)$
for some $\eta>0$.

Fix a one-periodic in time almost complex structure $J^0$. Let $U$ be
an isolating neighborhood of $x$ such that, in addition, no periodic
orbit of $\varphi_H$ with normalized augmented action in $\tI$ other
than $x$ enters $U$. By Theorem \ref{thm:energy} applied to
$K=\{x(0)\}$, there exists a constant $c_\infty>0$ such that, for all
$k\in\N$, every non-trivial $k$-periodic solution of the Floer
equation for the pair $(H,J)$ asymptotic to $x^k$ as $s\to\pm\infty$
has energy greater than~$c_\infty$, where $J$ is $k$-periodic and
sufficiently $C^\infty$-close to $J^0$. (In what follows, such an
almost complex structure $J$ is chosen to be generic and is suppressed
in the notaion.)

Let $c>0$ be outside the union of the action spectra
$\CS\big(H^{\nat k}\big)$, $k\in \N$, and set $I=(-c,\,c)$.  Next we
pick a large constant $C>0$ and a small constant $\eps>0$ depending on
$c$, to be specified later. As is easy to show using the Kronecker
theorem, there exists an arbitrarily large $k\in\N$ such that for all
$i$
\begin{equation}
\label{eq:T1}
\| k a_i\|_{\lambda}<\eps
\end{equation}
and 
\begin{equation}
\label{eq:T2}
\textrm{either }\ta_i=\ta_0\textrm{ or }k|\ta_i-\ta_0|>C.
\end{equation}
Here $\|a\|_{b}\in [0,\, b/2]$ stands for the distance from
$a\in S^1_{b}=\R/b\Z$ to $0$. We will also require that
\begin{equation}
\label{eq:eta-C}
k>C/\eta.
\end{equation}

\begin{Lemma}
\label{lemma:homology}
Assume that $C>0$ is sufficiently large and $\eps>0$ is sufficiently
small and that $k\in\N$ satisfies the requirements \eqref{eq:T1},
\eqref{eq:T2} and \eqref{eq:eta-C}, and let $F$ be a $k$-periodic,
non-degenerate $C^2$-small perturbation of $H^{\nat k}$. Then, in the
notation from Section \ref{sec:LFH},
$\CF_*\big(F,\bx^k\#(\ell A_0)\big)$ is a direct summand in
$\CF_*^I(F)$ whenever
$\CA_{H^{\nat k}}\big(\bx^k\#(\ell A_0)\big)\in I$. In particular,
$$
\bigoplus_{|\ell|<c/\lambda} \HF_*\big(\bx^k\# (\ell A_0)\big)
$$
is a direct summand in $\HF_*^I\big(H^{\nat k}\big)$
\end{Lemma}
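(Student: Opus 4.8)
The plan is to establish the stronger chain-level assertion: for every non-degenerate $F$ that is $C^2$-close enough to $H^{\nat k}$ (with a generic $k$-periodic almost complex structure $C^\infty$-close to the fixed structure $J^0$, so that the Crossing Energy Theorem \ref{thm:energy} applies with the constant $c_\infty$), the span $C'$ of the $F$-periodic orbits lying near the capped orbits $\bx^k\#(\ell A_0)$, $|\ell|<c/\lambda$, is a direct summand of $\CF_*^I(F)$; the displayed ``in particular'' assertion then follows by passing to homology. Writing $C''$ for the span of the $F$-orbits near the remaining capped $k$-periodic orbits of $H^{\nat k}$ with action in $I$, so that $\CF_*^I(F)=C'\oplus C''$ as a graded vector space, it suffices to show that no component of the Floer differential of $F$ joins a generator of $C'$ to a generator of $C''$ in either direction. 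This is the kind of statement established in the third assertion of Lemma \ref{lemma:spec}, and I would follow that proof in combination with the proof of Theorem \ref{thm:energy}.

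The first step is purely bookkeeping: to pin down which capped $k$-periodic orbit $\bz$ of $H^{\nat k}$, with $\CA_{H^{\nat k}}(\bz)\in I$ and $\bz$ not one of the $\bx^k\#(\ell' A_0)$, can possibly carry a differential to or from $\bx^k\#(\ell A_0)$. A component of the Floer differential drops the Conley--Zehnder index by one, so by \eqref{eq:supp} applied to the orbits into which $\bx^k\#(\ell A_0)$ and $\bz$ split under $F$, such a $\bz$ must satisfy $|\hmu(\bz)-\hmu(\bx^k\#(\ell A_0))|<2n+1+o(1)$ as $F\to H^{\nat k}$. From the augmented-action identity \eqref{eq:aug-action}, $\hmu(\by)=\tfrac{2N}{\lambda}\big(\CA_{H^{\nat k}}(\by)-\tCA_{H^{\nat k}}(y)\big)$ for any capped $k$-periodic orbit $\by$ with underlying orbit $y$, and since $\CA_{H^{\nat k}}(\bx^k\#(\ell A_0))=-\ell\lambda$ and $\tCA_{H^{\nat k}}(x^k)=k\ta_0$, one computes
$$
\hmu(\bz)-\hmu\big(\bx^k\#(\ell A_0)\big)=\tfrac{2N}{\lambda}\big(\CA_{H^{\nat k}}(\bz)+\ell\lambda\big)-\tfrac{2N}{\lambda}\,k\big(\ta_z-\ta_0\big),
$$
where $\ta_z$ is the normalized augmented action of $z$; the first term is bounded by $4Nc/\lambda$ because $|\CA_{H^{\nat k}}(\bz)|<c$ and $|\ell|<c/\lambda$. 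After the reduction in the proof of the theorem, every $k$-periodic orbit with normalized augmented action in $\tI$ is an iterate of one of $x_0,\dots,x_r$; accordingly, if $\ta_z\neq\ta_0$ then $k|\ta_z-\ta_0|>C$ by \eqref{eq:T2} (when $z$ is an iterate of some $x_i$) or by $|\ta_z-\ta_0|\geq\eta$ together with \eqref{eq:eta-C}, so for $C$ large enough the displayed difference has absolute value exceeding $2n+1$ by a fixed margin and no (perturbed) differential is possible, while if $\ta_z=\ta_0$ with $z=x^k$ then $\bz$ is one of the $\bx^k\#(\ell' A_0)$, excluded by hypothesis. The only surviving possibility is $z=x_i^k$ with $i\geq1$ and $\ta_i=\ta_0$. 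In that case the second term of the display vanishes and, by \eqref{eq:T1}, $\CA_{H^{\nat k}}(\bz)$ lies within $\eps$ of $\lambda\Z$, so the difference lies within $2N\eps/\lambda$ of $2N\Z$; using $N\geq n+1$ and taking $\eps$ small, a difference small enough to permit a (perturbed) differential must have vanishing $2N\Z$-part, which forces $\big|\CA_{H^{\nat k}}(\bz)-\CA_{H^{\nat k}}(\bx^k\#(\ell A_0))\big|<\eps$. Finally, the recappings $\bx^k\#(\ell' A_0)$ with $\ell'\neq\ell$ differ in mean index by $2N|\ell-\ell'|\geq2N\geq2(n+1)>2n+1$, so they contribute no differential either.

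The substantive step is to eliminate this last possibility using the Crossing Energy Theorem. A differential of the kind just isolated would be a Floer trajectory $u$ of $F$ joining a perturbed orbit near $x^k$, which lies inside a fixed small isolating neighborhood $U$ of $x$ chosen so that $\bar U$ meets no periodic orbit of $\varphi_H$ besides $x$, to a perturbed orbit near $x_i^k$ with $x_i\neq x$; hence $u$ leaves $U$, and its energy is $|\CA_F(\cdot)-\CA_F(\cdot)|<\eps+o(1)$ by the action estimate just obtained. If such trajectories existed for $F$ arbitrarily $C^2$-close to $H^{\nat k}$, I would choose a sequence $F_m\to H^{\nat k}$ with trajectories $u_m$ (along a subsequence $\ell$, $i$, the orbit $\bz$, and the end of $u_m$ asymptotic near $x^k$ may be taken constant), normalize $u_m$ by an $s$-translation so that it first reaches $\partial U$ at $s=0$ and by an integer $t$-translation so that this happens at $t_m\in[0,1)$, and apply the target--local Gromov compactness theorem of \cite{Fi} precisely as in the proof of Theorem \ref{thm:energy}. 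The limit is an $H^{\nat k}$-Floer trajectory $v$ mapping a half-cylinder into $\bar U$, asymptotic there to $x^k$ — the only periodic orbit in $U$, which is isolated as a periodic orbit by hypothesis — with $v(0,\lim t_m)\in\partial U$ and $E(v)\leq\liminf E(u_m)\leq\eps$. Restricting $v$ to the closure of the component of $v^{-1}(\mathrm{int}\,U)$ adjacent to that asymptotic end and invoking Theorem \ref{thm:energy} with $K=\{x(0)\}$ gives $E(v)>c_\infty$, contradicting the choice $\eps<c_\infty$ (take, e.g., $\eps<c_\infty/2$ to absorb the $o(1)$). Hence for $F$ sufficiently $C^2$-close to $H^{\nat k}$ no component of the differential joins $C'$ and $C''$, $C'$ is a direct summand, and the lemma follows.

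I expect the main obstacle to be this third step. One has to be sure, first, that the finitely many potentially dangerous capped orbits of the surviving case really do have action within $\eps$ of the relevant $\bx^k\#(\ell A_0)$ — this is exactly the point where the hypothesis $N\geq n+1$ and the index bound \eqref{eq:supp} get combined with the action-clustering input \eqref{eq:T1} — and, second, that the Gromov-compactness limit genuinely crosses $\partial U$ while retaining the small-energy bound, so that the Crossing Energy Theorem can be applied; the verification that this limit is not entirely contained in $\bar U$ (so that the relevant domain has nonempty boundary) requires, as in the proof of Theorem \ref{thm:energy}, a generic choice of the neighborhood $U$.
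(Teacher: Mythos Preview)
Your proof is correct and follows the same overall case analysis as the paper: split the capped orbits $\bz$ with action in $I$ according to whether their normalized augmented action equals $\ta_0$ or not, and use \eqref{eq:T2}/\eqref{eq:eta-C} to produce a mean-index gap in the latter case. The one organisational difference is in the critical case $z=x_i^k$ with $\ta_i=\ta_0$ and $i\geq 1$. You first show the action gap is below $\eps$, then run a compactness argument on hypothetical $F$-trajectories (essentially inlining the proof of condition (i) in Lemma~\ref{lemma:spec}) to produce an $H^{\nat k}$-trajectory of energy $\leq\eps$ crossing $\partial U$, contradicting the Crossing Energy Theorem. The paper instead applies the Crossing Energy Theorem directly to $H^{\nat k}$-trajectories: any such trajectory has energy $>c_\infty>\eps$, so by the clustering \eqref{eq:T1} the action gap is actually $>\lambda-\eps$, hence the mean-index gap exceeds $2N(\lambda-\eps)/\lambda>2n+1$ (using $N\geq n+1$ and \eqref{eq:eps2}); with the index gap established in \emph{all} cases where a trajectory exists, Lemma~\ref{lemma:spec} is then invoked once. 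The paper's route is slightly cleaner---it reduces everything to a single invocation of Lemma~\ref{lemma:spec} rather than re-running its compactness step---while yours is more self-contained and makes explicit the per-orbit nature of the argument that the paper leaves implicit when it cites Lemma~\ref{lemma:spec}.
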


The specific bounds on $C>0$ and $\eps>0$ are \eqref{eq:C},
\eqref{eq:eps1} and \eqref{eq:eps2} below.

\begin{proof}
  Set $\bx_k=\bx^k\# (\ell A_0)$, where $|\ell|<c/\lambda$ as in the
  assertion of the lemma, and hence $\CA_{H^{\nat k}}(\bx_k)\in
  I$. Let $\by_k$ be any other capped $k$-periodic orbit with action
  in $I$. We claim that $\by_k$ and $\bx_k$ can be connected by a
  solution of the Floer equation only when
\begin{equation}
\label{eq:diff-index}
\big|\hmu(\bx_k)-\hmu(\by_k)\big|> 2n+1.
\end{equation}
Once this is established, Lemma \ref{lemma:homology} will follow from
Lemma \ref{lemma:spec}.

Assume first that $y_k$ is one of the orbits $x_i^k$, e.g.,
$y_k=x_1^k$. Then, by \eqref{eq:T2}, there are two cases to consider.

If $k|\ta_1-\ta_0|>C$, we have
$$
\big|\CA_{H^{\nat k}}(\bx_k)-\CA_{H^{\nat k}}(\by_k)\big|
+\frac{\lambda}{2N}\big|\hmu(\bx_k)-\hmu(\by_k)\big|> C.
$$
Here the first term is bounded from above by $2c=|I|$ since both
orbits have action in $I$. Hence, when $C$ is sufficiently large, we
infer that
\begin{equation}
\label{eq:2(n+1)}
\big|\hmu(\bx_k)-\hmu(\by_k)\big|> 2(n+1)
\end{equation}
and \eqref{eq:diff-index} follows. Note that here it suffices to take
\begin{equation}
\label{eq:C}
C> \frac{4N}{\lambda}(c+n+1).
\end{equation}
In fact, we can make the right-hand side in \eqref{eq:diff-index}
arbitrarily large by taking a sufficiently large $C$.

The second case is when $\ta_1=\ta_0$. Then
$$
\big|\CA_{H^{\nat k}}(\bx_k)-\CA_{H^{\nat k}}(\by_k)\big|
=\frac{\lambda}{2N}\big|\hmu(\bx_k)-\hmu(\by_k)\big|.
$$
Assume that 
\begin{equation}
\label{eq:eps1}
\eps<c_\infty.
\end{equation}
Then, by Theorem \ref{thm:energy}, $\bx_k$ and $\by_k$ can be
connected by a solution of the Floer equation only when
$$
\big|\CA_{H^{\nat k}}(\bx_k)-\CA_{H^{\nat k}}(\by_k)\big|>\eps.
$$
This,  by the first inequality of \eqref{eq:T1}, implies that

$$
\big|\CA_{H^{\nat k}}(\bx_k)-\CA_{H^{\nat k}}(\by_k)\big|>\lambda-\eps,
$$
and hence
$$
\big|\hmu(\bx_k)-\hmu(\by_k)\big|>{2N}\frac{\lambda-\eps}{\lambda}.
$$
Recall that $N\geq n+1$. Therefore, \eqref{eq:diff-index} holds when
$\eps>0$ is sufficiently small, e.g., such that
\begin{equation}
\label{eq:eps2}
\eps<\lambda/2(n+1).
\end{equation}

Finally, if $y_k$ is not one of the orbits $x_i^k$, i.e., its
normalized augmented action is not in $\tI$, we have
$$
\big|\tA_{H^{\nat k}}(y_k)-\tA_{H^{\nat k}}(x^k)\big|>k\eta>C
$$
by \eqref{eq:eta-C}. Then, exactly as in the first case, we have
\eqref{eq:2(n+1)} and hence \eqref{eq:diff-index}.
\end{proof}

Note also that the last argument shows that for any $k$-periodic orbit
$\by_k$ with action in $I$, but normalized augmented action outside
$\tI$, we have
\begin{equation}
\label{eq:mu-O}
\big|\hmu(\bx^k)-\hmu(\by_k)\big|\geq O(k)
\end{equation}
where the lower bound on the right is independent of the orbit.

Throughout the rest of the proof, we assume that $c>\lambda$, and
hence both $\bx^k$ and $\bx^k\# A_0$ have action in $I$; that $C>0$ is
sufficiently large and $\eps>0$ is sufficiently small and, in
particular, \eqref{eq:eps1} holds; and $k$ satisfying \eqref{eq:T1}
and \eqref{eq:T2} is also sufficiently large.  As a consequence, the
requirements of Lemma \ref{lemma:homology} are met.

Let $F$ be $C^2$-close to $H^{\nat k}$. Then the orbits which $x^k$
splits into under the perturbation $F$ lie in the isolating
neighborhood $U$ of $x$.

Pick a non-zero class 
$$
\gamma\in\HF_*(\bx^k)\subset \HF_*^I\big(H^{\nat k}\big).
$$
Then the class 
$$
\qq \gamma=\Phi_\alpha\big(\Phi_\beta(\gamma)\big)\in \HF_*(\bx^k\#
A_0)\subset \HF_*\big(H^{\nat k}\big)
$$
is also non-zero. (Here we use \eqref{eq:qh-fh-action} and
\eqref{eq:hom-relation}.) Moreover, by Lemma \ref{lemma:trivial}, the
intermediate class $\Phi_\beta(\gamma)$ is not in $\HF_*(\bx^k)$ or
$\HF_*(\bx_k\# A_0)$.

Now, it is a formal algebraic consequence of Lemmas
\ref{lemma:homology} and \ref{lemma:trivial} that there exists a
capped $k$-periodic orbit $\bz$ of $F$, one of the orbits that $\bx^k$
splits into, connected by a solution $u_F$ of the Floer equation to a
capped $k$-periodic orbit $\bz_*$, which, in turn, is connected to a
closed orbit $\bz'\# A_0$ by a solution $v_F$, where again $\bz'$ is
one of the orbits which $\bx^k$ splits into. Furthermore, $z_*$ is not
among the orbits arising from $x$, and $z_*$ does not enter the
neighborhood $U$ of $x$.

Passing to the limit as $F\to H^{\nat k}$ for a suitably chosen
sequence of perturbations and using the target-local compactness
theorem from \cite{Fi} exactly as in the proof of Lemma
\ref{lemma:spec}, it is easy to show that $\bx^k$ is connected by a
solution of the Floer equation for $H^{\nat k}$ to some $k$-periodic
orbit $\by_+$ lying entirely outside $U$ and such that, by Theorem
\ref{thm:energy},
$$
\CA_{H^{\nat k}}(\bx^k)-c_\infty>\CA_{H^{\nat k}}(\by_+)
$$
By passing if necessary to a subsequence of perturbations
$F\to H^{\nat k}$, we can also assume that the capped orbits $\bz_*$
converge to a capped $k$-periodic orbit $\by_*$ of $H$.

We claim that, when $k$ is sufficiently large, the orbit $y_*$ is
necessarily one of the orbits $x_i^k$. (In other words, $z_*$ is among
the orbits which $y_*=x_i^k$ splits into under the perturbation $F$.)
Indeed, this follows from \eqref{eq:mu-O} and the fact that $\by_*$
has action in $I$; for $\mu(\bz_*)=\mu(\bz)-|\beta|$ and hence
$$
\big|\hmu(\by_*)-\hmu(\bx^k)\big|\leq 2n+|\beta|.
$$

The orbit $\by_*$ might be different from $\by_+$, but
$$
\CA_{H^{\nat k}}(\by_+)\geq \CA_{H^{\nat k}}(\by_*)\geq \CA_{H^{\nat
    k}}(\bx^k\# A_0)= \CA_{H^{\nat k}}(\bx^k)-\lambda.
$$

If $c_\infty>\lambda$, we have arrived at a contradiction and the
proof is finished. Thus we can assume that $c_\infty\leq \lambda$.
Then, by \eqref{eq:T1}, we have
$$
\big|\CA_{H^{\nat k}}(\by_*)-\CA_{H^{\nat k}}(\bx^k\# A_0)\big|<\eps.
$$

However, applying the same argument to the Floer trajectories $v_F$,
we find a $k$-periodic orbit $\by_-$ of $H^{\nat k}$ which does not
enter $U$, is connected to $\bx^k\# A_0$ by a solution of the Floer
equation, and such that, again by Theorem \ref{thm:energy},
$$
\CA_{H^{\nat k}}(\by_-)>\CA_{H^{\nat k}}(\bx^k\# A_0)+c_\infty 
$$
and
$$
\CA_{H^{\nat k}}(\bx^k\# A_0)+\eps>
\CA_{H^{\nat k}}(\by_*)\geq\CA_{H^{\nat k}}(\by_-)
$$
which is impossible by \eqref{eq:eps1}.  

This contradiction concludes the proof of the theorem.
\end{proof}


\begin{thebibliography}{CKRTZ}

\bibitem[AK]{AK} D.V.  Anosov, A.B. Katok, New examples in smooth
  ergodic theory. Ergodic diffeomorphisms, (in Russian), \emph{Trudy
    Moskov.\ Mat.\ Ob\v{s}\v{c}.}, \textbf{23} (1970), 3--36.

\bibitem[Ba15a]{Ba1} M. Bator\'eo, On hyperbolic points and periodic
  orbits of symplectomorphisms, \emph{J. Lond.\ Math.\ Soc.\ (2)},
  \textbf{91} (2015), 249--265.

\bibitem[Ba15b]{Ba2} M. Bator\'eo, On non-contractible hyperbolic
  periodic orbits and periodic points of symplectomorphisms,
  \emph{J. Sympl.\ Geom.},
  \textbf{15} (2017), 687--717. %no. 3

\bibitem[Br15a]{Br:Annals} B. Bramham, Periodic approximations of
  irrational pseudo-rotations using pseudoholomorphic curves,
  \emph{Ann.\ of Math.\ (2)}, \textbf{181} (2015), 1033--1086. %no. 3,

\bibitem[Br15b]{Br} B. Bramham, Pseudo-rotations with sufficiently
  Liouvillean rotation number are $C^0$-rigid, \emph{Invent.\ Math.},
  \textbf{199} (2015), 561--580. %no. 2
 
\bibitem[BH]{BH} B. Bramham, H. Hofer, First steps towards a
  symplectic dynamics, \emph{Surv.\ Differ.\ Geom.}, \textbf{17}
  (2012), 127--178.

\bibitem[BHS]{BHS}
 L. Buhovsky, V. Humili\`ere, S. Seyfaddini, The action spectrum and
 $C^0$ symplectic topology, in preparation.


\bibitem[CGG]{CGG} M. Chance, V.L. Ginzburg, B.Z. G\"urel,
  Action-index relations for perfect Hamiltonian diffeomorphisms,
  \emph{J. Sympl.\ Geom.}, \textbf{11} (2013), 449--474.

\bibitem[Ch98]{Ch:Lagr} Y. Chekanov, Lagrangian intersections,
  symplectic energy, and areas of holomorphic curves, \emph{Duke
    Math.\ J.}, \textbf{95} (1998), 213--226. %no. 1,

\bibitem[Ch02]{Ch} Y. Chekanov, Differential algebra of Legendrian
    links, \emph{Invent.\ Math.}, \textbf{150} (2002),
  441--483.% no. 3,
 
\bibitem[Ci]{Ci} E. Cineli, Conley conjecture and local Floer
  homology, Preprint arXiv:1710.07749.

\bibitem[CKRTZ]{CKRTZ} B. Collier, E. Kerman, B. Reiniger,
  B. Turmunkh, A. Zimmer A symplectic proof of a theorem of Franks,
  \emph{Compos.\ Math.}, \textbf{148} (2012), 1969--1984.

\bibitem[EGH]{EGH} Y. Eliashberg, A. Givental, H. Hofer, Introduction
  to symplectic field theory, \emph{Geom.\ Funct.\ Anal.}, (2000)
  Special Volume, Part II, 560--673.

\bibitem[EP]{EP} M. Entov, L. Polterovich, Calabi quasimorphism and
  quantum homology, \emph{Int.\ Math.\ Res.\ Not.\ IMRN},
  \textbf{2003}, no.\ 30, 1635--1676.

\bibitem[FaHe]{FaHe} A. Fathi, M. Herman, Existence de
  dif\'eomorphismes minimaux, in \emph{ Dynamical systems, Vol.\ I --
    Warsaw}, Ast\'erisque, No.\ 49, Soc.\ Math.\ France, Paris, 1977,
  pp. 37--59,.

\bibitem[FK]{FK} B. Fayad, A. Katok, Constructions in elliptic
  dynamics, \emph{Ergodic Theory Dynam.\ Systems}, \textbf{24} (2004),
  1477--1520.

\bibitem[Fi]{Fi} J.W. Fish, Target-local Gromov compactness,
  \emph{Geom.\ Topol.}, \textbf{15} (2011), 765--826.

\bibitem[Fl]{Fl} A. Floer, Symplectic fixed points and holomorphic
  spheres, \emph{Comm.\ Math.\ Phys.}, \textbf{120} (1989), 575--611.

\bibitem[Fo]{Fo} B. Fortune, A symplectic fixed point theorem for
  $\CP^n$, \emph{Invent.\ Math.}, \textbf{81} (1985), 29--46. %no. 1, 

\bibitem[FW]{FW} B. Fortune, A. Weinstein, A symplectic fixed point
  theorem for complex projective spaces, \emph{Bull.\ Amer.\ Math.\
    Soc.\ (N.S.)}, \textbf{12} (1985), 128--130.  %no. 1,

\bibitem[Fr92]{Fr92} J. Franks, Geodesics on $S^2$ and periodic points
  of annulus homeomorphisms, \emph{Invent.\ Math.}, \textbf{108}
  (1992), 403--418. %no. 2,

\bibitem[Fr96]{Fr96} J. Franks, Area preserving homeomorphisms of open
  surfaces of genus zero, \emph{New York Jour.\ of Math.}, \textbf{2}
  (1996), 1--19.

\bibitem[Fr99]{Fr99} J. Franks, The Conley index and non-existence of
  minimal homeomorphisms. Proceedings of the Conference on
  Probability, Ergodic Theory, and Analysis (Evanston, IL,
  1997). \emph{Illinois J. Math.}, \textbf{43} (1999),
  457--464. %no. 3
 
\bibitem[FrHa]{FH} J. Franks, M. Handel, Periodic points of
  Hamiltonian surface diffeomorphisms, \emph{Geom.\ Topol.},
  \textbf{7} (2003), 713--756.

\bibitem[FM]{FM} J. Franks, M. Misiurewicz, Topological methods in
  dynamics. \emph{Handbook of dynamical systems}, Vol.\ 1A, 547--598,
  North-Holland, Amsterdam, 2002.

\bibitem[Gi05]{Gi:We} V.L. Ginzburg, The Weinstein conjecture and
  theorems of nearby and almost existence, in \emph{The Breadth of
    Symplectic and Poisson Geometry}, 139--172, Progr.\ Math., 232,
  Birkh\"auser Boston, Boston, MA, 2005.

\bibitem[Gi07]{Gi:coiso} V.L. Ginzburg, Coisotropic intersections,
  \emph{Duke Math.\ J.}, \textbf{140} (2007), 111--163.

\bibitem[Gi10]{Gi:CC} V.L. Ginzburg, The Conley conjecture,
  \emph{Ann.\ of Math.\ (2)}, \textbf{172} (2010), 1127--1180.

\bibitem[GG09]{GG:gaps}
V.L. Ginzburg, B.Z. G\"urel,
Action and index spectra and periodic orbits in Hamiltonian dynamics,
\emph{Geom.\ Topol.}, \textbf{13} (2009), 2745--2805.

\bibitem[GG10]{GG:gap} V.L. Ginzburg, B.Z. G\"urel, Local Floer
  homology and the action gap, \emph{J. Sympl.\ Geom.}, \textbf{8}
  (2010), 323--357.

\bibitem[GG12]{GG:nm} V.L. Ginzburg, B.Z. G\"urel, Conley conjecture
  for negative monotone symplectic manifolds, \emph{Int.\ Math.\ Res.\
    Not.\ IMRN}, \textbf{2012}, no.\ 8, 1748--1767.
% % %2011, doi:10.1093/imrn/rnr081.

\bibitem[GG14]{GG:hyperbolic} V.L. Ginzburg, B.Z. G\"urel, Hyperbolic
  fixed points and periodic orbits of Hamiltonian diffeomorphisms,
  \emph{Duke Math.\ J.}, \textbf{163} (2014), 565--590.

\bibitem[GG15]{GG:survey} V.L. Ginzburg, B.Z. G\"urel, The Conley
  conjecture and beyond, \emph{Arnold Math.\ J.}, \textbf{1} (2015),
  299--337.

\bibitem[GG16a]{GG:nc} V.L. Ginzburg, B.Z. G\"urel, Non-contractible
  periodic orbits in Hamiltonian dynamics on closed symplectic
  manifolds, \emph{Compos.\ Math.}, \textbf{152} (2016), 1777--1799.

\bibitem[GG16b]{GG:convex} V.L. Ginzburg, B.Z. G\"urel,
  Lusternik--Schnirelmann theory and closed Reeb orbits, Preprint
  arXiv:1601.03092.

\bibitem[GG17]{GG:Rev} V.L. Ginzburg, B.Z. G\"urel, Conley conjecture
  revisited, \emph{Int.\ Math.\ Res.\ Not.\ IMRN}, \textbf{2017}, doi:
  10.1093/imrn/rnx137.

\bibitem[GG18]{GG:PR2} V.L. Ginzburg, B.Z. G\"urel, 
  Pseudo-rotations vs.\ rotations, in preparation.

\bibitem[GK]{GK} V.L. Ginzburg, E. Kerman, Homological resonances for
  Hamiltonian diffeomorphisms and Reeb flows, \emph{Int.\ Math.\ Res.\
    Not.\ IMRN}, \textbf{2010}, 53--68.

\bibitem[G\"u13]{Gu:nc} B. Z. G\"urel, On non-contractible periodic
  orbits of Hamiltonian diffeomorphisms, \emph{Bull.\ Lond.\ Math.\
    Soc.}, \textbf{45} (2013), 1227--1234.

\bibitem[G\"u14]{Gu:hq} B.Z. G\"urel, Periodic orbits of Hamiltonian
  systems linear and hyperbolic at infinity, \emph{Pacific J. Math.},
  \textbf{271} (2014), 159--182.

\bibitem[He]{He:irr} D. Hein, The Conley conjecture for irrational
  symplectic manifolds, \emph{J. Sympl.\ Geom.}, \textbf{10} (2012),
  183--202.

\bibitem[HCP]{HCP} L. Hern\'andez-Corbato, F. Presas, The conjugation
  method in symplectic dynamics, Preprint arXiv:1605.09161. To appear
  in \emph{Rev.\ Mat.\ Iberoam.}

\bibitem[Hi]{Hi} N. Hingston, Subharmonic solutions of Hamiltonian
  equations on tori, \emph{Ann.\ of Math.} (2), \textbf{170} (2009),
  525--560.

\bibitem[HZ]{HZ} H. Hofer, E. Zehnder, \emph{Symplectic Invariants and
    Hamiltonian Dynamics}, Birk\"auser Verlag, Basel, 1994.

\bibitem[KS]{KS} A. Kislev, E. Shelukhin, Private communication, May 2018.

\bibitem[LeC]{LeC} P. Le Calvez, Periodic orbits of Hamiltonian
  homeomorphisms of surfaces, \emph{Duke Math.\ J.}, \textbf{133}
  (2006), 125--184.

\bibitem[LCY]{LCY} P. Le Calvez, J.-C. Yoccoz, Un th\'eor\`eme d'indice
  pour les hom\'eomorphismes du plan au voisinage d'un point fixe,
  \emph{Ann.\ of Math.\ (2)}, \textbf{146} (1997), 241--293. %no. 2,

\bibitem[LO]{LO} H.V. L\^{e}, K. Ono, Cup-length estimates for
  symplectic fixed points, in \emph{Contact and Symplectic Geometry
    (Cambridge, 1994)}, 268--295, Publ.\ Newton Inst., 8, Cambridge
  Univ.\ Press, Cambridge, 1996.

\bibitem[Lo]{Lo} Y. Long, \emph{Index Theory for Symplectic Paths with
    Applications}, Birkh\"auser Verlag, Basel, 2002.
% % % %Progress in Mathematics, 207.

\bibitem[McD98]{McD98} D. McDuff, From symplectic deformation to
  isotopy, in \emph{Topics in symplectic 4-manifolds (Irvine, CA,
    1996)}, 85--99, First Int.\ Press Lect.\ Ser., I, Int.\ Press,
  Cambridge, MA, 1998.

\bibitem[McD09]{McD:ellipsoids} D. McDuff, Symplectic embeddings of
  4-dimensional ellipsoids, \emph{J. Topol.}, \textbf{2} (2009),
  1--22. %no. 1,

\bibitem[McD10]{McD} D. McDuff, Monodromy in Hamiltonian Floer theory,
  \emph{Comment.\ Math.\ Helv.}, \textbf{85} (2010), 95--133.  %no. 1,

\bibitem[MS]{MS} D. McDuff, D. Salamon, \emph{J-holomorphic Curves and
    Symplectic Topology}, Colloquium publications, vol.\ 52, AMS,
  Providence, RI, 2012.

\bibitem[Mo]{Mo} K. Mohnke, Holomorphic disks and the chord
  conjecture, \emph{Ann.\ of Math.\ (2)}, \textbf{154} (2001),
  219--222. %no. 1,

\bibitem[Oh05a]{Oh:gamma} Y.-G. Oh, Spectral invariants, analysis of
  the Floer moduli space, and geometry of the Hamiltonian
  diffeomorphism group, \emph{Duke Math.\ J.}, \textbf{130} (2005),
  199--295.  %no. 2,

\bibitem[Oh05b]{Oh:constr} Y.-G. Oh, Construction of spectral
  invariants of Hamiltonian paths on closed symplectic manifolds, in
  \emph{The breadth of symplectic and Poisson geometry}, 525--570,
  Progr.\ Math., 232, Birkh\"auser, Boston, MA, 2005.

\bibitem[Or17a]{Or1} R. Orita, Non-contractible periodic orbits in
  Hamiltonian dynamics on tori, \emph{Bull.\ Lond.\ Math.\ Soc.},
  \textbf{49} (2017), 571--580.

\bibitem[Or17b]{Or2} R. Orita, On the existence of infinitely many
  non-contractible periodic trajectories in Hamiltonian dynamics on
  closed symplectic manifolds, Preprint arXiv:1703.01731. To appear in
  \emph{J. Sympl.\ Geom.}

\bibitem[PSS]{PSS} S. Piunikhin, D. Salamon, M. Schwarz, Symplectic
  Floer--Donaldson theory and quantum cohomology, in \emph{Contact and
    Symplectic Geometry (Cambridge, 1994}), Publ.\ Newton Inst., vol.\
  8, Cambridge University Press, Cambridge, 1996, 171--200.

\bibitem[Po]{Po} L. Polterovich, Growth of maps, distortion in groups
  and symplectic geometry, \emph{Invent.\ Math.}, \textbf{150} (2002),
  655--686. %no. 3,
 

\bibitem[Sa]{Sa} D.A. Salamon, Lectures on Floer homology, in
  \emph{Symplectic Geometry and Topology}, IAS/Park City Math.\ Ser.,
  vol.\ 7, Amer.\ Math.\ Soc., Providence, RI, 1999, 143--229.
% % % Eds: Y. Eliashberg and L. Traynor.

\bibitem[SZ]{SZ} D. Salamon, E. Zehnder, Morse theory for periodic
  solutions of Hamiltonian systems and the Maslov index, \emph{Comm.\
    Pure Appl.\ Math.}, \textbf{45} (1992), 1303--1360.

\bibitem[Sc98]{Sc:invent} M. Schwarz, A quantum cup-length estimate
  for symplectic fixed points, \emph{Invent.\ Math.}, \textbf{133}
  (1998), 353--397.

\bibitem[Sc00]{Sc} M. Schwarz, On the action spectrum for closed
  symplectically aspherical manifolds, \emph{Pacific J.\ Math.},
  \textbf{193} (2000), 419--461.

\bibitem[Se]{Se} S. Seyfaddini, $C^0$-limits of Hamiltonian paths and
  the Oh-Schwarz spectral invariants, \emph{Int.\ Math.\ Res.\ Not.\
    IMRN}, \textbf{2013}, 4920--4960. %no. 21,

\bibitem[Us10]{Us:ineq} M. Usher, The sharp energy--capacity
  inequality, \emph{Commun.\ Contemp.\ Math.}, \textbf{12} (2010),
  457--473.  %no. 3,

\bibitem[Us11]{Us:BD} M. Usher, Boundary depth in Floer theory and its
  applications to Hamiltonian dynamics and coisotropic submanifolds,
  \emph{Israel J. Math.}, \textbf{184} (2011), 1--57.

\bibitem[Wa]{Wa} P. Walters, \emph{An Introduction to Ergodic
    Theory}. Graduate Texts in Mathematics, 79. Springer--Verlag, New
  York--Berlin, 1982.

\bibitem[Vi92]{Vi:gen} C. Viterbo, Symplectic topology as the geometry
  of generating functions, \emph{Math.\ Ann.}, \textbf{292} (1992),
  685--710.

\bibitem[Vi95]{Vi:product} C. Viterbo, The cup--product on the
  Thom--Smale--Witten complex, and Floer cohomology, in \emph{The
    Floer memorial volume}, Progr.\ Math., 133, Birkh\"auser, Basel,
  1995, 609--625.

\end{thebibliography}
\end{document}